\documentclass[a4paper,11pt]{article}
\usepackage{amsfonts}
\usepackage{color}
\usepackage{amsmath,amssymb,comment}
\usepackage{verbatim}
\usepackage{geometry}
\newtheorem{theo}{Theorem}[section]
\newtheorem{lem}[theo]{Lemma}
\newtheorem{cor}[theo]{Corollary}

\newtheorem{defi}[theo]{Definition}

\newcommand{\mysection}[1]{\section{#1} \setcounter{equation}{0}}
\newcommand{\proof}{{\sc Proof.} \quad}
\newcommand{\proofc}{{\sc Proof} \ }
\newcommand{\be}{\begin{equation} \label}
\newcommand{\ee}{\end{equation}}
\newcommand{\bea}{\begin{eqnarray}\label}
\newcommand{\eea}{\end{eqnarray}}
\newcommand{\bas}{\begin{eqnarray*}}
\newcommand{\eas}{\end{eqnarray*}}
\newcommand{\bit}{\begin{itemize}}
\newcommand{\eit}{\end{itemize}}
\newcommand{\qed}{\hfill$\Box$ \vskip.2cm}
\newcommand{\nn}{\nonumber}
\newcommand{\R}{\mathbb{R}}
\newcommand{\N}{\mathbb{N}}
\newcommand{\pO}{\partial\Omega}

\newcommand{\eps}{\varepsilon}

\newcommand{\wto}{\rightharpoonup}
\newcommand{\wsto}{\stackrel{\star}{\rightharpoonup}}

\newcommand{\io}{\int_\Omega}
\newcommand{\na}{\nabla}
\newcommand{\Del}{\Delta}
\newcommand{\del}{\delta}
\newcommand{\lam}{\lambda}
\newcommand{\pa}{\partial}
\newcommand{\bom}{\overline{\Omega}}
\newcommand{\Om}{\Omega}

\newcommand{\hs}{\hspace*}

\newcommand{\vp}{\varphi}

\newcommand{\lbal}{\left\{ \begin{array}{l}}
\newcommand{\lball}{\left\{ \begin{array}{ll}}
\newcommand{\ear}{\end{array} \right.}

\newcommand{\abs}{\\[5pt]}

\newcommand{\tme}{T_{max,\eps}}

\newcommand{\ueps}{u_\eps}
\newcommand{\veps}{v_\eps}

\newcommand{\heps}{h_\eps}

\newcommand{\geps}{g_\eps}
\newcommand{\yeps}{y_\eps}
\newcommand{\zeps}{z_\eps}

\newcommand{\epsr}{\eps^\star}
\newcommand{\vepst}{v_{\eps t}}

\newcommand{\aeps}{a_{\eps}}
\newcommand{\beps}{b_{\eps}}
\newcommand{\meps}{m_{\eps}}
\begin{document}
\enlargethispage{10mm}
\title{Subcritical-mass global solvability in a doubly degenerate Keller--Segel system
with signal production}
\author{
Tobias Black\footnote{tblack@math.uni-paderborn.de}\\
{\small Institut f\"ur Mathematik, Universit\"at Paderborn,}\\
{\small 33098 Paderborn, Germany} 
\and
Genglin Li\footnote{genglin.li@hhu.edu.cn}\\
{\small School of Mathematics, Hohai University,}\\
{\small 211100 Nanjing, Jiangsu, P.R.~China}
}
\date{}
\maketitle
\begin{abstract}
\noindent 
We consider the initial-boundary value problem for a variant of the Keller--Segel chemotaxis system with doubly degenerate diffusion, i.e. we study
\begin{align*}
	\left\lbrace\begin{array}{l}
	u_t = \nabla\cdot (uv\na u) - \nabla\cdot (u^2 v\nabla v),\\
	v_t = \Delta v  +u -v,\\
	(uv\nabla u - u^2 v \nabla v)\cdot\nu=\nabla v\cdot \nu=0,\\
	u(x,0)=u_0(x), \quad v(x,0)=v_0(x),
	\end{array} \right.
\end{align*}
in a smoothly bounded domain $\Omega\subset\mathbb{R}^2$. Crucially, we only assume the sufficiently regular initial data to be nonnegative, but allow those functions to be zero at non-trivial parts of the domain. We show that, despite possibly starting from a degenerate state, the system admits global solutions in a framework of generalized energy solutions, whenever the initial mass is below the threshold number $m_0=4\pi$. Moreover, in a radial setting the threshold number can be increased to $m_0=8\pi$.
\medskip
\noindent {\bf Key words:} chemotaxis; doubly degenerate diffusion; signal production;
generalized energy solutions; critical mass\\
{\bf MSC 2020:} 35K65 (primary); 35B40, 35D99, 92C17 (secondary) 

\end{abstract}
\newpage
%
%
%
%
\section{Introduction}\label{intro}
To capture the visible patterns emerging in physical experiments with populations of simple organisms has always been a driving motivation for the mathematical modeling. Take for example the observed morphological change in bacterial colonies witnessed during the evolution of \emph{B. subtilis} on agar plates with variations of environmental conditions (\cite{OhgiwaraMatsushitaMatsuyama92}), which motivated the formulation of the system featured in \cite{kawasaki_JTB1997}. In the proposed model a key assumption was that the motility of the bacteria is also reduced in areas with an insufficient nutrient supply. The authors of \cite{plaza} later augmented the model by an additional chemotaxis term, which represents a movement bias along concentration gradients towards areas with higher amount of nutrients. In this case the governing partial differential equations then essentially take the form

\be{consumpsys}
	\lball
	u_t = \nabla\cdot (uv\nabla u) - \nabla\cdot (u^2 v\nabla v)+f(u,v),\\
	v_t = \Delta v -uv,
	\end{array} \right.
\ee
where $u$ denotes the distribution of bacterial cells and $v$ the concentration of nutrients, respectively, and $f$ is a proliferation term.\abs
The analysis of the one-dimensional version of this system with prototypical choice $f(u,v)=uv$ in \cite{win_TRAN2021} revealed that solutions to this system archetype may actually feature large varieties of nontrivial steady states, a property well-sought in pursuit of recapturing pattern formation and mostly unprecedented in chemotaxis models without the degenerate synergy in the cell diffusion coefficient (\cite{taowin_subcrit,win_arma14,Lankeit16,winJDE17,winTRAN2017}). Subsequent work, mostly still concerned with this consumption-type framework
and closely related variants, has further underlined both the mathematical
subtlety and the qualitative relevance of the cross-degenerate structure.  In
one-dimensional settings, beyond the construction in \cite{win_TRAN2021}, the
same type of global bounded continuous solutions was obtained in
\cite{li_win_CPAA2021}, but for a larger class of initial data.  In two
space dimensions, the first global weak solutions in planar convex domains were
constructed under a smallness assumption in
\cite{ct_doubly_degenerate_smallsignal}, while global bounded continuous weak
solutions were later obtained under the additional condition
$\int_\Omega\ln u_0>-\infty$ in \cite{zhang_li_m3as2026}, which was subsequently
removed in \cite{win_wu_preprint} by means of substantially more refined
techniques.  In \cite{win_wu_preprint}, convergence toward inhomogeneous limiting
profiles was also established, in line with the nontrivial large-time behavior
already observed in one dimension. Further advances concern modified systems in
 which the taxis
contribution is weakened or compensated by stronger dissipative effects
(\cite{black_kohatsu_wu_jee2026,deji_huang_wang_ejam2026,li_jde2022,win_jde2024,wu_nodea2024}),
as well as logistic extensions, where zero-order damping can instead drive
solutions toward spatially homogeneous equilibria
(\cite{li_win_aa2025,li_win_prsea2026,pan_narwa2024,pan_mu_na2026,zhang_li_amo2026}).
\abs
Accordingly, one might assume that, while mathematically difficult to treat, the degeneracy plays a major structural role in the occurrence of spatially non-homogeneous steady states. In nutrient models one can make the physically meaningful and mathematically convenient assumption that at the starting time at least some amount of nutrient is present everywhere in the domain in order to circumvent an initial degeneracy. The situation is quite different if we take a look at the other big class of chemotaxis systems, the Keller--Segel systems with signal production (\cite{kellerInitiationSlimeMold1970}), which were introduced to describe aggregation processes in slime mold populations. In these models the attracting chemical signal is only produced when the cells begin to starve and hence initially the signal concentration should not be assumed to be positive throughout the domain. This poses quite an interesting mathematical problem if combined with the doubly degenerate diffusion coefficient, since then we would have to recover from an initially degenerate system state, which could possibly be too much to even allow suitable solutions at all, or if some solutions exist they could possibly feature discontinuities.
\abs
Taking the above into account, we are going to concern ourselves with the system
\be{0}
	\lball
	u_t = \na\cdot (uv\na u) - \na\cdot (u^2 v\na v),
	\qquad & x\in\Om, \ t>0, \\[1mm]
	v_t = \Del v  +u -v,
	\qquad & x\in\Om, \ t>0, \\[1mm]
	(uv\na u - u^2 v \na v)\cdot\nu=\na v\cdot \nu=0,
	\qquad & x\in\pO, \ t>0, \\[1mm]
	u(x,0)=u_0(x), \quad v(x,0)=v_0(x),
	\qquad & x\in\Om,
	\end{array} \right.
\ee
in a bounded domain $\Om\subset\R^2$ with smooth boundary.\abs
As previously stated in the motivation above, one of our main interest is the possibility to start with an already degenerate state for the diffusion coefficient. Accordingly, we prescribe sufficiently regular initial data which are only assumed to be nonnegative. That is, we take initial data satisfying
  \be{init}	
	\lbal
	u_0\in W^{1,\infty}(\Om) \mbox{ is nonnegative with $u_0\not\equiv0$, and} \\[1mm]
	v_0\in W^{1,\infty}(\Om) \mbox{ is nonnegative in $\Om$.}
	\ear
  \ee
In particular, we do not prescribe any size or form for the areas where bacterial population or signal concentration may in fact be zero. Under these assumptions we are able to establish global solutions in a suitable concept of generalized solvability, whenever the initial mass of the bacterial population is not too large.
\begin{theo}\label{theo11}
  Let $\Om\subset\R^2$ be a smoothly bounded domain, and suppose that $u_0$ and $v_0$ satisfy (\ref{init}).
Then, if 
  \begin{itemize}
  \item[(i)] $\io u_0<4\pi$,\quad or
  \item[(ii)] $\Om=B_R(0)$ for some $R>0$ and $u_0$ and $v_0$ are radially symmetric with $\io u_0<8\pi$,
  \end{itemize}
	there exists at least one pair of functions
  \be{t1}
    \left\{
  \begin{array}{c}
  u\in C^0(\bom\times(0,\infty))
  \cap L^2_{loc}((0,\infty);W^{1,2}(\Om))\\
  v\in C^{2,1}(\bom\times(0,\infty))
  \cap L^\infty((0,\infty);W^{1,2}(\Om)), 
  \end{array}
    \right.
  \ee
 with $u\ge0$ on $\bom\times[0,\infty)$ and $v>0$ on $\bom\times(0,\infty)$,
  such that $(u,v)$ is a global generalized energy solution of 
   (\ref{0}) in the sense of Definition \ref{dw}. Moreover, 
  \be{t2}
  \sup_{t>0}\io u(\cdot,t)\ln u(\cdot,t)
  <\infty,
  \ee
  \be{t3}
  \io u(\cdot,t)
  =\io u_0
   \qquad\mbox{for all $t>0$,}
  \ee
  and, for every $p\in(1,\infty)$,
  \be{t4}
  \lim_{t\searrow 0}\|v(\cdot,t)-v_0\|_{L^p(\Om)}=0.
  \ee
In case (ii) the solution is moreover radially symmetric about $x=0$ for all $t>0$.
\end{theo}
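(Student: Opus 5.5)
We will construct the solution as a limit of classical solutions to regularized problems. For $\eps\in(0,1)$ we consider
\[
u_{\eps t}=\na\cdot\big((\ueps+\eps)\veps\na\ueps\big)-\na\cdot\big(\ueps^2\veps\na\veps\big),\qquad
v_{\eps t}=\Del\veps+\ueps-\veps,
\]
with initial data $u_{0\eps}=u_0+\eps$ and $v_{0\eps}=v_0+\eps$, so that the data are strictly positive and $\io u_{0\eps}\to\io u_0<4\pi$ (respectively $<8\pi$). Standard parabolic theory gives local classical positive solutions, and radial symmetry is inherited in case (ii).

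\textbf{Uniform positivity of $\veps$.} Since $\ueps\ge0$, comparison yields $\veps\ge \eps e^{-t}$. This depends on $\eps$, however, so we will instead use the representation $\veps(t)\ge e^{t(\Del-1)}\ueps$-type lower bounds. Together with mass conservation $\io\ueps=\io u_{0\eps}$ and the positivity of the Neumann heat kernel, this gives $\veps\ge c(\tau,T)>0$ on $\bom\times[\tau,T]$ independently of $\eps$. This is what eventually yields $v>0$ for $t>0$.

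\textbf{Key energy identity.} Testing the first equation by $\ln\ueps$ turns the degenerate structure into $\io\veps|\na\ueps|^2$ minus $\io\ueps\veps\na\ueps\cdot\na\veps$. Testing the second equation by $-\Del\veps$ and by $\veps$ produces a Lyapunov-type functional of Keller--Segel form,
\[
\mathcal F_\eps=\io\ueps\ln\ueps-\io\ueps\veps+\frac12\io|\na\veps|^2+\frac12\io\veps^2 ,
\]
with dissipation $\io\ueps\veps|\na\ln\ueps-\na\veps|^2\cdot\ueps+\io v_{\eps t}^2$ (up to $\eps$-corrections that are controlled). The corrections come from the $\eps\veps\na\ueps$ term; they produce $\eps\io\veps|\na\ueps|^2/\ueps$, which has a good sign. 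Boundedness from below of $\mathcal F_\eps$ is the crux. By the Moser--Trudinger inequality in the form $\io\ueps\veps\le\frac{m}{8\pi}\cdot 2\io|\na\veps|^2+\ldots$, i.e. the Chang--Yang/Nagai--Senba--Yoshida argument, $\mathcal F_\eps\ge -C$ when $m<4\pi$ (boundary points cost a factor $2$). In the radial case the boundary concentration is excluded, and the threshold becomes $8\pi$. This gives
\[
\sup_t\io\ueps\ln\ueps+\sup_t\io|\na\veps|^2+\int_0^\infty\io v_{\eps t}^2\le C,
\]
which yields \eqref{t2} and the $L^\infty(W^{1,2})$ bound on $v$. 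It also gives $\eps$-independent global existence via an extensibility criterion, since $L\log L$ bounds in 2D bootstrap $\veps$ to $L^\infty$ and then local bounds on $\ueps$ follow.

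\textbf{Compactness and passage to the limit.} The dissipation, combined with the local positive lower bound on $\veps$ and upper bounds on $\veps$ and $\na\veps$ obtained from parabolic regularity (via the $L\log L$ bound), gives $\na\ueps\in L^2_{loc}(\bom\times(0,\infty))$ after testing by $\ueps$ and using $\veps\ge c(\tau,T)$. Time-derivative bounds in a dual space, an Aubin--Lions argument, and Hölder regularity for degenerate equations with positive $v$ (the equation for $u$ is then of porous-medium type $\na\cdot(u v\na u)$, so continuity follows from DiBenedetto-type results) give $u\in C^0(\bom\times(0,\infty))$. Limits of the identities, combined with lower semicontinuity, give the generalized energy inequality of Definition \ref{dw}. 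Mass conservation \eqref{t3} follows from equi-integrability of the $L\log L$ bound. Finally, \eqref{t4} follows from the Duhamel formula for $v$ and the $L^1$ bound on $u$.

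\textbf{Main obstacle.} We expect the main difficulty to be the degeneracy at $t=0$. Near $t=0$, $\veps$ can be of size $\eps$, so the dissipation gives no $\eps$-uniform control on $\na\ueps$ there. This is why only $L^2_{loc}((0,\infty);W^{1,2})$ is claimed, and why continuity at $t=0$ is asserted only for $v$. The test function class in Definition \ref{dw} will have to avoid needing traces of $u$ at $t=0$ beyond weak convergence.
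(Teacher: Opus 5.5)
Your overall architecture matches the paper: lifting the data, the Keller--Segel Lyapunov functional plus Moser--Trudinger for the $4\pi$/$8\pi$ thresholds, an $\eps$-independent heat-kernel lower bound for $\veps$ away from $t=0$, then compactness and lower semicontinuity. However, the sentence ``$L\log L$ bounds in 2D bootstrap $\veps$ to $L^\infty$ and then local bounds on $\ueps$ follow'' hides the main difficulty, and the mechanism you indicate does not work. Testing the first equation by $\ueps^{p-1}$ pits the dissipation $\io\ueps^{p-1}\veps|\na\ueps|^2$ against $\io\ueps^{p+1}\veps|\na\veps|^2$. An $L^\infty$ bound on $\veps$ does nothing for that term. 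Parabolic regularity fed only with $\sup_t\io\ueps\ln\ueps<\infty$ controls $\na\veps$ in $L^2$-type spaces but gives no bound in $L^q$ for $q>2$ in general. A bound on $\na\veps$ in $L^\infty$ requires $\ueps\in L^p$ for some $p>2$, which is exactly what is to be proved. So your later appeal to ``upper bounds on $\veps$ and $\na\veps$ obtained from parabolic regularity (via the $L\log L$ bound)'' is circular. The same applies to the global existence, the $L^2$-bound for $\na\ueps$ on $(\tau,T)$ and the H\"older continuity that rest on it. The paper closes this gap as follows:
\begin{itemize}
\item it evolves the coupled quantity $\io\ueps^p+\io|\na\veps|^{2p+2}$ (Lemma~\ref{lem5});
\item it uses $\veps\ge c(\tau)$ on $(\frac{\tau}{2},\infty)$ to turn the $\veps$-weighted dissipation into $\io|\na\ueps^{\frac{p+1}{2}}|^2$;
\item it absorbs $\io\ueps^{p+2}\veps^{\frac{p+2}{p+1}}$ and $\io\ueps^{p+2}$ through a logarithmically refined Gagliardo--Nirenberg inequality (Lemmas~\ref{lem-tw2014} and \ref{lem4}).
\end{itemize}
This is where the bounds on $\io\ueps\ln\ueps$ and $\io\ueps\veps$ actually enter. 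Only afterwards are the $W^{1,\infty}$-bound for $\veps$ and the Moser iteration for $\ueps$ (Lemmas~\ref{lem17} and \ref{lem7}) available.

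Your closing paragraph suggests the degeneracy at $t=0$ can be sidestepped through the test function class. In Definition~\ref{dw}, however, the test functions lie in $C_0^\infty(\bom\times[0,\infty))$. Moreover, (\ref{w2}) demands that $uf_\lambda'(u)v\na u$ and $uf_\lambda''(u)v|\na u|^2$ be integrable up to $t=0$, precisely where you (correctly) have no $\eps$-uniform control of $\na\ueps$. The missing idea is to test the first equation with $f_\lambda'(\ueps)$, where $f_\lambda(s)=(1+s)^{-\lambda}$. Since $f_\lambda$ and $\Phi_\lambda$ are bounded, every other term is controlled by the energy bounds on $\io\ueps\veps$, $\io\veps^2$, $\io|\na\veps|^2$ and $\int_0^T\io v_{\eps t}^2$. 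This yields $\int_0^T\io\ueps f_\lambda''(\ueps)\veps|\na\ueps|^2\le C(T)$ and, via Young's inequality, an $L^r$-bound with $r>1$ for $\ueps f_\lambda'(\ueps)\veps\na\ueps$ on $\Om\times(0,T)$ (Lemma~\ref{lem13}). Without such estimates you can neither verify (\ref{w2}) nor pass to the limit in (\ref{w7}) near $t=0$.

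Finally, your regularization $(\ueps+\eps)\veps\na\ueps$ does not preserve the energy identity as claimed. Besides $-\eps\io\veps|\na\ueps|^2/\ueps$ it also produces $+\eps\io\veps\na\ueps\cdot\na\veps$, which has no sign and is not controlled by the functional. It is simpler to keep the equation unchanged and only lift the initial data, as the paper does, since $u_{0\eps},v_{0\eps}>0$ already make the approximate problems nondegenerate.
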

{\bf Remark.}\quad
It is worth mentioning that the second solution component $v$ in fact solves
the second equation in (\ref{0}) classically for positive times. More precisely,
$v$ solves the boundary value problem
\bas
\left\{ 
\begin{array}{ll}
v_t = \Del v  +u -v,
\qquad & x\in\Om, \ t>0, \\[1mm]
\partial_\nu v=0,
\qquad & x\in\pO, \ t>0,
\end{array}\right.
\eas
classically in $\bom\times(0,\infty)$, together with the initial behavior
specified in (\ref{t4}). Indeed, this follows from the convergence of
$\veps$ toward $v$ established in (\ref{11.3}), the regularity assumption on
$v_0$ in (\ref{init}), and the initial behavior recorded in (\ref{11.9}) of
Lemma \ref{lem10}. \abs
{\bf Main ideas.} \quad
The foundation for our argument is the observation that, despite the degeneracy present in the diffusion coefficient of the first equation in \eqref{0}, the Lyapunov-functional known from the standard Keller--Segel system remains intact (Lemma~\ref{LF}) and can be exploited to obtain a first crucial set of time global bounds (Lemma~\ref{lem1}). To derive a priori estimates of higher regularity from these bounds, however, we are forced to draw on a uniform lower bound for the chemical concentration, which due to our assumption that the initial data are only nonnegative can only be obtained away from the starting time (Lemma~\ref{lem2}). Accordingly, we have to restrict the derivation of higher order regularity estimates to time-intervals not including zero (Lemma~\ref{lem17} and Lemma\ref{lem7}). Nevertheless, the functions obtained from the limiting procedure in Lemma~\ref{lem11} are sufficiently well-behaved to constitute a global generalized energy-solution as described in Definition~\ref{dw} (see Section~\ref{s6}).
\mysection{Local solutions to a family of approximate systems.}\label{s2}
The generalized solutions described in Theorem~\ref{theo11} will be constructed as a limit of functions solving a regularized variant of \eqref{0}. Since we allow the initial data to be zero throughout the domain, we intend to avoid the degeneracy at the starting time and therefore add a small positive perturbation to both initial distributions. Accordingly, we are going to consider the following family of systems.

\be{0eps}
	\left\{ \begin{array}{ll}
	u_{\eps t} = \na\cdot (\ueps\veps\na\ueps) - \na\cdot (\ueps^2 \veps \na\veps) ,
	\qquad & x\in\Om, \ t>0, \\[1mm]
	v_{\eps t} = \Del \veps + \ueps -\veps,
	\qquad & x\in\Om, \ t>0, \\[1mm]
	\frac{\pa\ueps}{\pa\nu}=\frac{\pa\veps}{\pa\nu}=0,
	\qquad & x\in\pO, \ t>0, \\[1mm]
	\ueps(x,0)=u_{0\eps}:=u_0(x)+\eps, \quad \veps(x,0)=v_{0\eps}:=v_0(x)+\eps,
	\qquad & x\in\Om.
	\end{array} \right.
\ee
For these regularized systems the time-local well-posedness and an extensibility criterion for the solutions can readily be obtained by standard arguments featured in closely related settings of doubly-degenerate chemotaxis problems.
\begin{lem}\label{lem_loc}
  Assume $\Om\subset\R^2$ is a domain with smooth boundary and that $(u_0,v_0)$ comply with \eqref{init}.
  Then for each $\eps\in (0,1)$, there exist $\tme\in (0,\infty]$ as well as functions
  \be{l1}
	\left\{ \begin{array}{l}
	\ueps \in C^0(\bom\times [0,\tme)) \cap C^{2,1}(\bom\times (0,\tme))	\qquad \mbox{and} \\[1mm]
	\veps \in C^0(\bom\times [0,\tme)) \cap C^{2,1}(\bom\times (0,\tme))
	\end{array} \right.
  \ee
  such that $\ueps>0$ and $\veps>0$ in $\bom\times [0,\tme)$,
  that $(\ueps,\veps)$ uniquely solves (\ref{0eps}) in the classical sense, and that
  \be{ext}
	\mbox{if $\tme<\infty$, \quad then \quad}
	\limsup_{t\nearrow\tme} \|\ueps(\cdot,t)\|_{L^\infty(\Om)} 
	= \infty.
  \ee
  Furthermore, if $\Om=B_R(0)$ for some $R>0$ and if $u_0$ and $v_0$ are
   radially symmetric, then the corresponding solution
    $(\ueps(\cdot,t),\veps(\cdot,t))$ is also
   radially symmetric about $x=0$ for all $t\in(0,\tme)$.
\end{lem}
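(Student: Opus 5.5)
The plan is to view (\ref{0eps}) as a nondegenerate quasilinear parabolic system, valid as long as $\ueps$ and $\veps$ stay positive, and to obtain local existence and uniqueness from Amann's theory of strongly coupled systems. The key steps are positivity, the extensibility criterion (\ref{ext}), and radial symmetry via uniqueness. Since $u_{0\eps}\ge\eps$ and $v_{0\eps}\ge\eps$ are in $W^{1,\infty}(\Om)\subset W^{1,p}(\Om)$ for every $p>2$, we set $w=(\ueps,\veps)$ and write the system as
\[
w_t=\na\cdot(A(w)\na w)+f(w),\qquad A(u,v)=\left(\begin{array}{cc} uv & -u^2v\\ 0 & 1\end{array}\right),\quad f(u,v)=(0,u-v).
\]
The matrix $A$ is upper triangular with eigenvalues $uv$ and $1$. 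Both are positive on the open set $\{u>0,v>0\}$, so the system is normally parabolic there. Amann's results (Theorems 14.4 and 14.6 of \emph{Nonhomogeneous linear and quasilinear elliptic and parabolic boundary value problems}, 1993) then give a unique maximal solution $w\in C^0([0,\tme);W^{1,p}(\Om))\cap C^{2,1}(\bom\times(0,\tme))$ with values in $(0,\infty)^2$. Continuity on $\bom\times[0,\tme)$ follows from $W^{1,p}\hookrightarrow C^0$. Alternatively, we can cut off $A$ by replacing $u,v$ with $\max\{u,\eps/2\}$ and similar expressions, which also keeps the system parabolic.

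Next we show that positivity is preserved. The $v$-equation is linear in $\veps$ with source $\ueps\ge0$, so the comparison principle gives $\veps\ge \eps e^{-t}$. For $\ueps$, we expand the first equation as a scalar equation $u_t=uv\Del u+b\cdot\na u+c\,u$. Its coefficients $b$ and $c=-\na\cdot(uv\na v)$ are bounded on every $[0,T]\subset[0,\tme)$, by parabolic regularity away from $t=0$ and by the $W^{1,p}$ bounds near $t=0$. The maximum principle therefore gives $\ueps\ge\eps e^{-Ct}>0$ on each such interval.

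The extensibility criterion (\ref{ext}) is the main obstacle. Amann's criterion only says that if $\tme<\infty$, then the $W^{1,p}$-norm, or the $C^\alpha$-norm, blows up or the solution approaches the boundary of the positivity set. We have to reduce this to an $L^\infty$ bound on $\ueps$. Suppose $\|\ueps\|_{L^\infty}\le M$ on $(0,\tme)$ with $\tme<\infty$. The positivity step already yields lower bounds for $\veps$ and, once $\veps$ is controlled, for $\ueps$ on finite time intervals. Heat semigroup estimates applied to the $v$-equation give bounds for $\veps$ in $W^{1,q}$ for every $q<\infty$, and also in $C^{1+\theta,\theta/2}$. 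The $u$-equation is then a scalar uniformly parabolic divergence-form equation $u_t=\na\cdot(a\na u-g)$ with $a=uv\in[\delta,C]$ and bounded $g=u^2v\na v$. De Giorgi--Nash--Moser (Porzio--Vespri) gives a Hölder bound on $\ueps$, and Schauder theory then gives $C^{1+\theta}$ bounds. This contradicts Amann's blow-up alternative, so (\ref{ext}) holds. The same scheme appears in \cite{win_TRAN2021} and related doubly degenerate works, and in the proof we would cite those lemmas rather than redo the estimates.

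Finally, for radial symmetry, assume $\Om=B_R(0)$ and that $u_0$ and $v_0$ are radial. For any rotation $Q$, the pair $(\ueps(Qx,t),\veps(Qx,t))$ solves (\ref{0eps}) with the same initial data, because the operators and the Neumann condition are rotation invariant. Uniqueness then shows that this pair coincides with $(\ueps,\veps)$.
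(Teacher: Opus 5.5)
\textbf{Gap in the extensibility step.} Your overall plan is the one the paper sketches: Amann's theory on $G=(0,\infty)^2$, the comparison bound $\veps\ge\eps e^{-t}$, a blow-up alternative, and radial symmetry from rotation invariance plus uniqueness. The extensibility step, however, is circular in the order you give it.

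Whatever form of Amann's alternative you use, you must show that $(\ueps,\veps)$ stays in a compact subset of $G$ as $t\nearrow\tme$. So you need a lower bound for $\ueps$ that is uniform on $(\tau,\tme)$.
\begin{itemize}
\item Your maximum-principle argument gives $\ueps\ge\eps e^{-Ct}$ only with $C\ge\sup|c|$. Here $c=-\na\cdot(\ueps\veps\na\veps)$ contains $\veps\na\ueps\cdot\na\veps$ and $\ueps\veps\Del\veps$.
\item On $[0,T]$ you bounded these by regularity on compact subintervals of $[0,\tme)$. That bound degenerates as $T\nearrow\tme$.
\item Under the sole hypothesis $\|\ueps\|_{L^\infty}\le M$, the $v$-equation gives $\na\veps$ bounded and H\"older. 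It gives neither $\na\ueps$ nor $\Del\veps$ in $L^\infty$.
\item Moving all $\na\ueps$-terms into $b$ does not help: you still need $\Del\veps\in L^\infty$, which requires H\"older continuity of the source $\ueps$.
\end{itemize}
Those bounds only come out of your later H\"older and Schauder steps. You run those for a ``uniformly parabolic'' equation with $a=\ueps\veps\in[\delta,C]$, which assumes exactly the lower bound on $\ueps$ you set out to prove.

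Near $t=0$, the $W^{1,p}$ bounds do not control $c$ in $L^\infty$ either. That part is harmless, since positivity is built into Amann's $G$-valued solution concept.

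\textbf{How to repair it.} The missing idea is to obtain the H\"older estimate for the \emph{degenerate} equation first. The Porzio--Vespri theory covers porous-medium-type equations such as $u_t=\na\cdot(uv\na u-u^2v\na v)$. It needs only $\ueps\le M$, $\veps\ge\eps e^{-\tme}$ and $|\na\veps|\le C$, with no lower bound on $\ueps$; this is how the paper itself uses it in Lemma~\ref{lem8}. Then proceed as follows.
\begin{itemize}
\item With $\ueps$ H\"older, Schauder theory for the $v$-equation bounds $\Del\veps$ and $\vepst$ in $L^\infty(\Om\times(\tau,\tme))$.
\item Now the minimum principle closes. For instance, $w:=\ueps e^{-\veps}$ satisfies $w_t=e^{-\veps}\na\cdot(w\veps e^{2\veps}\na w)-w\vepst$ with $\pa_\nu w=0$. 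Hence $\min_{\bom}w(\cdot,t)\ge\min_{\bom}w(\cdot,\tau)\,e^{-(t-\tau)\sup_{(\tau,\tme)}\|\vepst\|_{L^\infty(\Om)}}$, which gives $\ueps\ge\delta>0$ up to $\tme$.
\item Only at this point is the $u$-equation uniformly parabolic with H\"older coefficients. Your divergence-form Schauder step then gives the $C^{1+\theta}$ bound that contradicts Amann's alternative.
\end{itemize}
The paper only cites an adaptation of an earlier doubly degenerate argument together with a positive lower bound for $\veps$. With this reordering, your proposal becomes a self-contained version of that argument.
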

\proof
   The local classical solvability follows from standard parabolic theory (cf.~\cite{amann, amann1993}).
   The extensibility criterion (\ref{ext}) can be derived by adapting the argument
   in \cite{ct_doubly_degenerate_smallsignal}, combined with the availability of a
   strictly positive lower bound for $\veps$. The latter follows from the definition
   of $v_{0\eps}$ together with \cite[Lemma~3.1]{fujie_senba_nonlinearity2018}. 
   Finally, if $\Omega=B_R(0)$ and $u_0$ and $v_0$ are radially symmetric about $x=0$,
   then the radial symmetry of $(\ueps,\veps)$ for all $t\in(0,\tme)$
   is a direct consequence of the uniqueness of the classical solutions.
\qed
\mysection{Global estimates}\label{s3}

Integration of the equation in \eqref{0} immediately provides uniform $L^1$-information as a starting point for further analysis. More importantly, the well-known energy functional of the standard Keller--Segel system retains its properties in this doubly degenerate setting, which contributes an additional leeway in pursuit of global uniform bounds.

\begin{lem}\label{LF}
Let $\Omega\subset \R^{2}$ and $m>0$. Assume $(u_0,v_0)$ comply with \eqref{init} and satisfy $\io u_0=m$, 
\be{mass}
\io \ueps(\cdot,t)=\meps:=m+\eps|\Om|
\qquad \mbox{for all $t\in (0,\tme)$ and } \eps\in (0,1),
\ee
and 
\be{vmass}
\io \veps(\cdot,t)
\le \max\Big\{\io v_0, m\Big\}+|\Om|
\qquad \mbox{for all $t\in (0,\tme)$ and } \eps\in (0,1).
\ee

Moreover, 
\be{lf1}
 \int_{0}^{t}\io v_{\eps t}^2
 + \int_{0}^{t}\io \Big|\sqrt{\veps}\na\ueps-\ueps\sqrt{\veps}\na\veps\Big|^2
 +\mathcal{F}(\ueps(\cdot,t),\veps(\cdot,t))=\mathcal{F}(u_{0\eps},v_{0\eps})
\ee
for all $t\in (0,\tme)$ and $\eps\in (0,1)$, where the functional $\mathcal{F}$ is given by
\be{df}
\mathcal{F}(\phi,\psi)
:=\io \phi\ln\phi -\io \phi\psi + \frac{1}{2} \io |\na \psi|^2
+\frac{1}{2} \io \psi^2,
\ee
for $\phi\in L^1(\Om)$ and $\psi\in W^{1,2}(\Om)$ such that
$\phi>0$ a.e. in $\Om$.
\end{lem}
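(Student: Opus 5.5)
The plan is to test both equations of (\ref{0eps}) against suitable functions and exploit the fact that the Neumann condition for $\ueps$ is inherited by the flux $\ueps\veps\na\ueps-\ueps^2\veps\na\veps$, since $\pa_\nu\veps=0$ as well. For (\ref{mass}) we integrate the first equation over $\Om$. The divergence structure and the vanishing flux give $\frac{d}{dt}\io\ueps=0$, so $\io\ueps(\cdot,t)=\io u_{0\eps}=m+\eps|\Om|$. For (\ref{vmass}) we integrate the second equation and obtain the linear ODE $y'=\meps-y$ for $y(t):=\io\veps(\cdot,t)$. Hence $y(t)=e^{-t}y(0)+(1-e^{-t})\meps\le\max\{y(0),\meps\}$. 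Since $y(0)=\io v_0+\eps|\Om|$ and $\meps=m+\eps|\Om|$ with $\eps<1$, this yields (\ref{vmass}).

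For the energy identity (\ref{lf1}) we use positivity of $\ueps$ from Lemma~\ref{lem_loc} and test the first equation with $\ln\ueps-\veps$. The time derivative part becomes $\io u_{\eps t}(\ln\ueps-\veps)=\frac{d}{dt}\io\ueps\ln\ueps-\io u_{\eps t}\veps$, where the term $\io u_{\eps t}$ vanishes by mass conservation. The right-hand side equals
\[
-\io \ueps\veps(\na\ueps-\ueps\na\veps)\cdot\Big(\frac{\na\ueps}{\ueps}-\na\veps\Big)=-\io\Big|\sqrt{\veps}\na\ueps-\ueps\sqrt{\veps}\na\veps\Big|^2 .
\]
Next we write $-\io u_{\eps t}\veps=-\frac{d}{dt}\io\ueps\veps+\io\ueps v_{\eps t}$. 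Testing the second equation with $v_{\eps t}$ gives
\[
\io\ueps v_{\eps t}=\io v_{\eps t}^2+\frac12\frac{d}{dt}\io|\na\veps|^2+\frac12\frac{d}{dt}\io\veps^2 .
\]
Combining these relations gives
\[
\frac{d}{dt}\mathcal{F}(\ueps,\veps)+\io v_{\eps t}^2+\io|\sqrt{\veps}\na\ueps-\ueps\sqrt{\veps}\na\veps|^2=0
\]
on $(0,\tme)$, and integrating in time yields (\ref{lf1}).

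The main obstacle is justifying this computation up to $t=0$. By (\ref{l1}) the solution is only $C^{2,1}$ for positive times, while $v_{\eps t}$ need not be continuous at $t=0$. We therefore first integrate over $(s,t)$ with $0<s<t$. We then let $s\searrow0$ using the continuity of $\ueps,\veps$ in $C^0(\bom\times[0,\tme))$, together with $\ueps\ge\eps$-type positivity near $t=0$ to control $\ln\ueps$. We also need $\na\veps(\cdot,s)\to\na v_{0\eps}$ in $L^2$. This follows from standard semigroup estimates, because $v_{0\eps}\in W^{1,\infty}$ and $\ueps$ is bounded near $t=0$. Monotone convergence then handles the two nonnegative dissipation integrals as $s\searrow0$.
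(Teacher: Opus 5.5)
Your proposal is correct and follows essentially the same route as the paper: mass conservation, an ODE comparison for $\io\veps$, then the computation of $\frac{d}{dt}\io(\ueps\ln\ueps-\ueps\veps)$ combined with testing the second equation by $v_{\eps t}$. You organize that computation by testing with $\ln\ueps-\veps$, which yields the square directly rather than after expanding, and your extra step of integrating over $(s,t)$ and letting $s\searrow0$ is a sound justification that the paper leaves implicit when it simply integrates over $(0,t)$.
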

\proof
Integrating the first equation in (\ref{0eps}) over $\Om$ immediately shows that
 $\frac{d}{dt}\io \ueps=0$, and thus (\ref{mass}) follows. Using this together with 
 the definitions of $u_{0\eps}$ and $v_{0\eps}$,
 an integration of the second equation in (\ref{0eps}) over $\Om$ and an ODE comparison yield (\ref{vmass}).
 
 \medskip
 Next, an integration by parts together with a straightforward computation applied to (\ref{0eps}) gives
  \bea{LF.1}
  \frac{d}{dt}\io \big(\ueps\ln\ueps - \ueps\veps\big) 
  =  -\io\veps|\na\ueps|^2 + 2\io \ueps\veps\na\ueps\cdot\na\veps
  - \io \ueps^2\veps|\na\veps|^2 - \io \ueps v_{\eps t}
  \eea
  for all $t\in (0,\tme)$. To handle the last integral on the right-hand side, we use the second equation in
   (\ref{0eps}), and then integrate by parts to obtain
  \bea{Lf.2}
   - \io \ueps v_{\eps t}
   = \io v_{\eps t} (-v_{\eps t} + \Del \veps -\veps)
   = -\io v_{\eps t}^2 - \frac{1}{2}\frac{d}{dt} \io |\na \veps|^2 -  \frac{1}{2}\frac{d}{dt} \io \veps^2
  \eea
   for all $t\in (0,\tme)$. Moreover, observing that
   \bas
   \io\veps|\na\ueps|^2 
   - 2\io \ueps\veps\na\ueps\cdot\na\veps
      + \io \ueps^2\veps|\na\veps|^2
      = \io \Big|\sqrt{\veps}\na\ueps-\ueps\sqrt{\veps}\na\veps\Big|^2,
   \eas
 we substitute \eqref{Lf.2} into \eqref{LF.1}. After collecting the resulting terms and
 integrating over $(0,t)$, we arrive at \eqref{lf1}.
\qed
To shorten some the notations we introduce the following constant
\begin{align}\label{m0}
m_0:=\begin{cases}
8\pi,\qquad & \mbox{if }\Om=B_R(0)\mbox{ for some }R>0\mbox{ and }(u_0,v_0)\mbox{ is radial about } x=0, \\
4\pi,\qquad & \mbox{otherwise,}
\end{cases}
\end{align}
to easily distinguish between the initial mass condition in the cases (i) and (ii) of Theorem~\ref{theo11}. Combining the functional from the previous Lemma with the famous Moser--Trudinger inequality, we can now derive uniform bounds whenever $\eps$ is sufficiently small and the initial mass is below the threshold number $m_0$.
\begin{lem}\label{lem1}
For all $m< m_0$ there exist $C(m)>0$ and $\epsr=\epsr(m)\in(0,1)$ such that whenever
$(u_0,v_0)$ satisfy \eqref{init} and
\bas
\io u_0=m,
\eas
then  the estimates
\be{1.1}
\io \ueps(\cdot,t)\ln\ueps(\cdot,t)\le C(m)
\ee
and
\be{1.2}
\|\veps(\cdot,t)\|_{W^{1,2}(\Om)}\le C(m)
\ee
as well as
\be{1.3}
\io \ueps(\cdot,t)\veps(\cdot,t)\le C(m)
\ee
and
\be{1.4}
\int_0^t\io \vepst^2(\cdot,s)ds \le C(m)
\ee
and
\be{1.6}
\int_0^t\io \Big|\sqrt{\veps}(\cdot,s)\na\ueps(\cdot,s)-\ueps(\cdot,s)\sqrt{\veps(\cdot,s)}\na\veps(\cdot,s)\Big|^2ds \le C(m)
\ee
hold for all $t\in(0,\tme)$ and $ \eps \in (0, \epsr)$.
\end{lem}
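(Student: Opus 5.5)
Since \eqref{lf1} has nonnegative dissipation terms, we get $\mathcal{F}(\ueps(\cdot,t),\veps(\cdot,t))\le \mathcal{F}(u_{0\eps},v_{0\eps})$. The right-hand side is bounded independently of $\eps\in(0,1)$, because $u_0,v_0\in W^{1,\infty}(\Om)$. So everything reduces to bounding $\mathcal{F}$ from below in a coercive way, namely
\bas
\mathcal{F}(\ueps,\veps)\ge \delta\io \ueps\ln\ueps+\delta\|\veps\|_{W^{1,2}(\Om)}^2-C
\eas
for some $\delta>0$. Once this is available, \eqref{1.1} and \eqref{1.2} follow. Then \eqref{1.3} follows from the identity $\io\ueps\veps=\io\ueps\ln\ueps+\frac12\|\veps\|_{W^{1,2}}^2-\mathcal{F}(\ueps,\veps)$ combined with the lower bound on $\mathcal{F}$. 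Finally, \eqref{1.4} and \eqref{1.6} come from \eqref{lf1}, since $-\mathcal{F}(\ueps,\veps)$ is bounded above.

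For the coercivity I will use the duality/Gibbs inequality. For any $\psi$ and any $\beta>0$, Jensen applied to the probability measure $\ueps/\meps$ gives
\bas
\io \ueps\psi\le \frac1\beta\io\ueps\ln\ueps+\frac{\meps}{\beta}\ln\Big(\io e^{\beta\psi}\Big)-\frac{\meps}{\beta}\ln\meps.
\eas
I take $\psi=\veps$ and combine this with the Moser--Trudinger inequality
\bas
\io e^{|w|}\le C_\Om \exp\Big(\frac{1}{8\pi\theta}\|\nabla w\|_2^2+\frac{1}{|\Om|}\io|w|\Big)\qquad (w\in W^{1,2}(\Om)),
\eas
where $\theta=1/2$ for general smooth domains (Chang--Yang) and $\theta=1$ for radial functions on a ball. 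Applying it to $w=\beta\veps$ gives
\bas
\io\ueps\veps\le\frac1\beta\io\ueps\ln\ueps+\frac{\meps\beta}{8\pi\theta}\io|\na\veps|^2+C(\beta,m)\Big(1+\io\veps\Big).
\eas
The last integral is controlled by \eqref{vmass}.

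Next I fix the parameters. Choose $\beta>1$ with $\beta$ close to $1$, and then $\epsr$ small so that $\meps\beta/(8\pi\theta)<\frac12$, which is possible because $\meps\to m<m_0=8\pi\theta$. This yields
\bas
\mathcal{F}\ge(1-\tfrac1\beta)\io\ueps\ln\ueps+\big(\tfrac12-\tfrac{\meps\beta}{8\pi\theta}\big)\io|\na\veps|^2+\tfrac12\io\veps^2-C,
\eq
which is the desired coercivity. Note that $\io\ueps\ln\ueps\ge -|\Om|/e$, so the positive part of the entropy is also controlled.

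The main obstacle is to get the sharp Moser--Trudinger constants right: $4\pi$ in general (because of boundary points) and $8\pi$ in the radial case. This also requires checking that the constants depend only on $m$ through $\beta$ and $\epsr$, and not on the particular data. Here there is a subtlety. The bound on $\mathcal{F}(u_{0\eps},v_{0\eps})$ depends on $\|u_0\|_{W^{1,\infty}}$ and $\|v_0\|_{W^{1,\infty}}$ and not merely on $m$. I would therefore let $C$ depend on the data as well, reading $C(m)$ as the constant for fixed data.
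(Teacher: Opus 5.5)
Your strategy is the paper's: the Jensen/Gibbs inequality with $\ueps/\meps$ as a probability density, Moser--Trudinger applied to $\beta\veps$ (the paper uses $\beta=1+\delta$), and then all estimates read off from \eqref{lf1}. Dividing by $\beta$ so as to keep $(1-\frac1\beta)\io\ueps\ln\ueps$, instead of the paper's $\delta\io\ueps\veps$, is only bookkeeping. Your remark that $C(m)$ really depends on the data, through $\mathcal{F}(u_{0\eps},v_{0\eps})$ and $\io v_0$, is also correct; the paper's proof has the same dependence.

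However, the parameter choice fails as written, exactly where you locate the main obstacle: your Moser--Trudinger constant is off by a factor of two. Your gradient coefficient is $\frac{1}{8\pi\theta}$, i.e.\ $\frac{1}{4\pi}$ in general and $\frac{1}{8\pi}$ in the radial case. With it, the requirement $\frac{\meps\beta}{8\pi\theta}<\frac12$ is equivalent to $\meps\beta<4\pi\theta=\frac{m_0}{2}$. This does not follow from $m<m_0=8\pi\theta$. As written, your argument only reaches $m<2\pi$ in general and $m<4\pi$ for radial data.

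The correct linearized inequality has gradient coefficient $\frac{1}{2m_0}$: for each $\eta>0$,
$\io e^{|w|}\le C_\eta\exp\{(\frac{1}{2m_0}+\eta)\|\nabla w\|_{L^2(\Om)}^2+\frac{c}{|\Om|}\|w\|_{L^1(\Om)}\}$.
On general smooth domains $\frac{1}{2m_0}=\frac{1}{8\pi}$. This comes from Chang--Yang's sharp exponent $2\pi$ for $e^{\alpha(w-\bar w)^2/\|\nabla w\|_2^2}$, linearized via $|s|\le\alpha\frac{s^2}{\|\nabla w\|_2^2}+\frac{\|\nabla w\|_2^2}{4\alpha}$. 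For radial functions on a ball $\frac{1}{2m_0}=\frac{1}{16\pi}$, from Moser's interior exponent $4\pi$, since concentration can then only occur at the origin. Note also that $\veps(\cdot,t)$ is indeed radial by Lemma~\ref{lem_loc}.

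With the correct constant, your gradient coefficient becomes $\meps\beta(\frac{1}{2m_0}+\eta)$. The condition $\meps\beta(\frac{1}{2m_0}+\eta)<\frac12$ can then be met by taking $\beta$ close to $1$, $\eta$ small, and afterwards $\epsr$ small, because $\frac{m}{m_0}<1$. This is the counterpart of the paper's \eqref{1.65}. After this correction the rest of your argument goes through.
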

\proof
  Since $m<m_0$, we can first choose $\del=\del(m)>0$ sufficiently small so that the inequality
  \bas
  \frac{1}{2}>m\cdot\frac{1}{2m_0}\cdot(1+\del)^2
  \eas
  remains valid, whereafter we may also fix $\epsr=\epsr(m)\in(0,1)$ satisfying
   \bas
    \frac{1}{2}>(m+\epsr|\Om|)\cdot\frac{1}{2m_0}\cdot(1+\del)^2.
    \eas
 We then select $\eta=\eta(m)>0$ such that
   \bas
      \frac{1}{2}>(m+\epsr|\Om|)\cdot\Big(\frac{1}{2m_0}+\eta\Big)\cdot(1+\del)^2,
   \eas
  which ensures that for all $\eps\in(0,\epsr)$,
  \be{1.65}
  \frac{1}{2}>(m+\eps|\Om|)\cdot\Big(\frac{1}{2m_0}+\eta\Big)\cdot(1+\del)^2.
  \ee
  Next we follow the arguments of \cite[Lemma 3.4]{nagai_senba_yoshida_1997} and \cite[Theorem 3.1]{bbtw_ks_review}. Again denoting $\meps=m+\eps|\Omega|$ we find from Jensen's inequality and \eqref{mass}, that 
  \bas
 -\ln\Big\{\frac{1}{\meps}\io e^{(1+\del)\veps}\Big\}
 &\le& -\io \ln\Big(\frac{e^{(1+\del)\veps}}{\ueps}\Big)\cdot \frac{\ueps}{\meps}\\
 &=& -\frac{1}{\meps}\Big\{(1+\del)\io \ueps\veps-\io \ueps\ln\ueps\Big\}
  \eas
 and hence
 \be{1.7}
-\io\ueps\veps
+\io \ueps\ln\ueps
 \ge \del \io\ueps\veps
 - \meps\ln\io e^{(1+\del)\veps} + \meps\ln \meps
 \ee
for all $t\in(0,\tme)$ and $\eps\in(0,\epsr)$. The Moser--Trudinger inequality (see e.g. \cite[Theorem 2.1]{nagai_senba_yoshida_1997}) provides $c_1=c_1(m)>0$ satisfying, 
 for all $t\in(0,\tme)$ and $\eps \in(0,\epsr)$,
 \bas
 \io e^{(1+\del)\veps}
 \le c_1\exp\bigg\{\Big(\frac{1}{2m_0}+\eta\Big)\cdot(1+\del)^2\|\na\veps\|_{L^2(\Om)}^2
 +\frac{2(1+\del)}{|\Om|}||\veps||_{L^1(\Om)}\bigg\}.
 \eas
 Using (\ref{vmass}), this implies 
 \bas
 \ln  \io e^{(1+\del)\veps}
 &\le& \Big(\frac{1}{2m_0}+\eta\Big)\cdot(1+\del)^2
 \cdot \io|\na\veps|^2
 +c_2
 \eas 
where $c_2=c_2(m):=\frac{2(1+\del)}{|\Om|}\cdot
 \Big\{\max\Big\{\io v_0, m\Big\}+|\Om|\Big\}
+\ln c_1$.  

\medskip

In view of (\ref{df}), (\ref{1.65}) and (\ref{1.7}), we infer that
\bas
\mathcal{F}(\ueps,\veps)
&\ge& \frac{1}{2}\io |\na\veps|^2
-\io\ueps\veps
+\io \ueps\ln\veps\\
&\ge& \frac{1}{2}\io |\na\veps|^2
+\del \io\ueps\veps
 - \meps\ln\io e^{(1+\del)\veps} + \meps\ln \meps\\
 &\ge&\Big\{\frac{1}{2}-\meps\cdot\Big(\frac{1}{2m_0}+\eta\Big)\cdot(1+\del)^2\Big\} \io |\na\veps|^2
 + \del \io\ueps\veps
 -c_2\meps-\frac{1}{e}\\
 &\ge& \del \io\ueps\veps
  -c_3
  \qquad\mbox{for all $t\in(0,\tme)$ and $\eps\in(0,\epsr)$.}
\eas
where we used $c_3=c_3(m):=c_2(m_0+|\Om|)+\frac{1}{e}> c_2\meps+\frac{1}{e}$ for all $\eps\in(0,\epsr)$.

Consequently, (\ref{1.3}) follows from (\ref{lf1}) together with the above lower bound
for $\mathcal{F}(\ueps,\veps)$ and the uniform boundedness of
$\mathcal{F}(u_{0\eps},v_{0\eps})$ for all $\eps\in(0,\epsr)$,
which in turn is ensured by the regularity assumption in (\ref{init}).
Finally, upon combining (\ref{1.3}) with (\ref{lf1}),
we readily infer the remaining estimates
(\ref{1.1}), (\ref{1.2}), (\ref{1.4}), and (\ref{1.6}).
\qed
%
%
%
Throughout the remainder of the paper we fix initial data $(u_0,v_0)$ complying with \eqref{init} and assumed to satisfy $m=\io u_0<m_0$ with $m_0$ given by \eqref{m0}. The pair of functions $(\ueps,\veps)$ will denote the corresponding solution to \eqref{0eps} as obtained in Lemma~\ref{lem_loc} and $\tme$ the maximal existence time established therein. The numbers $\meps$ and $\epsr$ will always represent the constants provided by Lemma~\ref{LF} and Lemma~\ref{lem1}, respectively. With the control over the gradient of $\veps$ provided by the previous lemma, we can now establish the following differential inequality for higher order terms.
\begin{lem}\label{lem5}
Let $p>1$. Then there exists $C(p)>0$ such that  for all $\eps\in(0,\epsr)$ and $t\in (0,\tme)$,
\bea{5.1}
 & &\hs{-10mm}
 \frac{d}{dt}\bigg\{\io \ueps^p
    + \io |\na\veps|^{2p+2}\bigg\}
    +\frac{p(p-1)}{2}\io \ueps^{p-1}\veps |\na\ueps|^2
    +\frac{2p}{p+1}\io |\na|\na\veps|^{p+1}|^2\nn\\
& &\le \frac{p(p-1)}{2}\io \ueps^{p+1} \veps |\na\veps|^2
    + C(p)\io \ueps^2|\na\veps|^{2p} + C(p).
\eea
\end{lem}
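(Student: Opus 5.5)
We treat the two summands separately and then add the resulting inequalities.

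\emph{The $\ueps$-part.} We test the first equation in (\ref{0eps}) by $p\ueps^{p-1}$ and integrate by parts using the Neumann conditions. This gives
\bas
\frac{d}{dt}\io \ueps^p
&=& -p(p-1)\io \ueps^{p-1}\veps|\na\ueps|^2 + p(p-1)\io \ueps^{p}\veps\na\ueps\cdot\na\veps .
\eas
Young's inequality, in the form $\ueps^p\veps|\na\ueps||\na\veps| \le \frac12 \ueps^{p-1}\veps|\na\ueps|^2+\frac12\ueps^{p+1}\veps|\na\veps|^2$, then yields
\bas
\frac{d}{dt}\io\ueps^p+\frac{p(p-1)}{2}\io\ueps^{p-1}\veps|\na\ueps|^2\le\frac{p(p-1)}{2}\io\ueps^{p+1}\veps|\na\veps|^2 .
\eas
This already produces the dissipative term and the first term on the right of (\ref{5.1}) with exactly the stated coefficients.

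\emph{The $\veps$-part.} We use the pointwise identity $\na\veps\cdot\na\Del\veps=\frac12\Del|\na\veps|^2-|D^2\veps|^2$ and compute
\bas
\frac{d}{dt}\io|\na\veps|^{2p+2}=(p+1)\io|\na\veps|^{2p}\na\veps\cdot\na(\Del\veps+\ueps-\veps).
\eas
The zero-order part contributes $-(2p+2)\io|\na\veps|^{2p+2}\le0$, which we drop.

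For the diffusive part we integrate by parts. Since $\Om$ need not be convex, a boundary integral of $|\na\veps|^{2p}\pa_\nu|\na\veps|^2$ appears. We control it in the standard way: first use $\pa_\nu|\na\veps|^2\le c_\Om|\na\veps|^2$ on $\pO$, then apply a trace inequality together with Gagliardo--Nirenberg to $w:=|\na\veps|^{p+1}$, and finally use the uniform bound $\|\na\veps\|_{L^2}\le C$ from (\ref{1.2}) of Lemma~\ref{lem1}. This lets us absorb the boundary term into a fraction of $\io|\na w|^2$ plus a constant. The diffusive part thus gives
\bas
-\frac{2p}{(p+1)}\cdot c\io|\na|\na\veps|^{p+1}|^2-(p+1)\io|\na\veps|^{2p}|D^2\veps|^2+C .
\eas
Here we should track constants so that at least $\frac{2p}{p+1}\io|\na w|^2$ survives after absorbing everything else; any leftover multiples of $\io|\na w|^2$ can be handled, and if necessary the coefficient can be adjusted by also using the $|D^2\veps|^2$-term.

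The source term is $(p+1)\io|\na\veps|^{2p}\na\veps\cdot\na\ueps$. We integrate it by parts once more to move the derivative off $\ueps$:
\bas
= -(p+1)\io\ueps|\na\veps|^{2p}\Del\veps-(p+1)\io\ueps\na\veps\cdot\na|\na\veps|^{2p}.
\eas
Using $|\Del\veps|\le\sqrt2|D^2\veps|$ and $|\na|\na\veps|^{2p}|\le 2p|\na\veps|^{2p-1}|D^2\veps|$, both integrals are bounded by $c\io\ueps|\na\veps|^{2p}|D^2\veps|$. Young's inequality then bounds this by $\frac{p+1}{2}\io|\na\veps|^{2p}|D^2\veps|^2+C(p)\io\ueps^2|\na\veps|^{2p}$. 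The first piece is absorbed into the Hessian dissipation above, and the second is precisely the remaining term in (\ref{5.1}).

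Adding the two inequalities gives (\ref{5.1}). We expect the main obstacle to be the boundary integral in the $\veps$-part, where the absorption must be done without losing the stated coefficient $\frac{2p}{p+1}$. For this we would first use the trace estimate $\|w\|_{L^2(\pO)}^2\le\eta\|\na w\|_{L^2}^2+C_\eta\|w\|_{L^{1/(p+1)}}^2$, with $\|w\|_{L^{1/(p+1)}}$ bounded via (\ref{1.2}), and then choose $\eta$ small.
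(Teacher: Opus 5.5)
Your proposal is correct and follows essentially the same route as the paper: you test with $p\ueps^{p-1}$ and apply Young's inequality for the first part, and for $\io|\na\veps|^{2p+2}$ you use $\na\veps\cdot\na\Del\veps=\frac12\Del|\na\veps|^2-|D^2\veps|^2$, the boundary bound $\pa_\nu|\na\veps|^2\le c|\na\veps|^2$ together with a trace/interpolation inequality and (\ref{1.2}), and Young's inequality on $\io\ueps|\na\veps|^{2p}|D^2\veps|$. The one slip is that $\frac{d}{dt}\io|\na\veps|^{2p+2}$ carries the prefactor $2p+2$, not $p+1$; with the correct factor the diffusion gives $\frac{4p}{p+1}\io|\na|\na\veps|^{p+1}|^2$, so half of it absorbs the boundary term and $\frac{2p}{p+1}$ survives without needing the Hessian term as a fallback.
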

\proof
     By (\ref{0eps}) and Young's inequality, we have
     \bea{3.2}
     \frac{1}{p}\frac{d}{dt}\io \ueps^p
     &=& -(p-1)\io \ueps^{p-2}\na\ueps
     \cdot\{\ueps\veps\na\ueps-\ueps^2\veps\na\veps\}\nn\\
     &\le& -\frac{p-1}{2}\io \ueps^{p-1}\veps |\na\ueps|^2
     + \frac{p-1}{2}\io \ueps^{p+1} \veps |\na\veps|^2
     \eea
     for all $\eps\in(0,\epsr)$ and $t\in (0,\tme)$.
     Next, using the identities $\na\veps\cdot\na\Del\veps=\frac{1}{2}\Del|\na\veps|^2-|D^2\veps|^2$
     and $\frac{p+1}{2}|\na\veps|^{p-1}\na|\na\veps|^2=\na|\na\veps|^{p+1}$, 
     and applying the second equation in (\ref{0eps}), we integrate by parts 
     multiple times to obtain
      \bea{3.3}
      \frac{1}{2p+2}\frac{d}{dt}\io |\na\veps|^{2p+2}
      &=& \io |\na\veps|^{2p}\na\veps\cdot\na\{\Del\veps+\ueps-\veps\}\nn\\
      &=& -\frac{2p}{(p+1)^2}\io |\na|\na\veps|^{p+1}|^2
      -\io |\na\veps|^{2p}|D^2\veps|^2\nn\\
      & &-2p\io \ueps|\na\veps|^{2p-2}(D^2\veps\cdot\na\veps)\cdot\na\veps
      -\io \ueps|\na\veps|^{2p}\Del\veps\nn\\
      & &-\io|\na\veps|^{2p+2}+\frac{1}{2}\int_{\pO}|\na\veps|^{2p}\frac{\pa|\na\veps|^2}{\pa\nu}
      \eea
        for all $\eps\in(0,\epsr)$ and $t\in (0,\tme)$. Since $|\Del\veps|\le \sqrt{2}|D^2\veps|$, Young's inequality yields
      \bea{3.4}
      & &\hs{-25mm}
      -2p\io \ueps|\na\veps|^{2p-2}(D^2\veps\cdot\na\veps)\cdot\na\veps
         -\io \ueps|\na\veps|^{2p}\Del\veps\nn\\
       &\le& \left(2p+\sqrt{2}\right)\io \ueps|\na\veps|^{2p}|D^2\veps|\nn\\
       &\le& \io |\na\veps|^{2p}|D^2\veps|^2+ c_1(p)\io \ueps^2|\na\veps|^{2p}
      \eea
      for some $c_1(p)>0$. To deal with the boundary term in (\ref{3.3}), we use  
       $\frac{\pa\veps}{\pa\nu}=0$ on $\pO$ 
       and the fact that there exists $c_2>0$ such that $\frac{\pa|\na\veps|^2}{\pa\nu}\le c_2|\na \veps|^2$ on $\pO$
       for all $\eps\in(0,\epsr)$ and $t\in (0,\tme)$ (\cite{lions_arma1980}).
      By the compact embedding $W^{r+\frac12,2}(\Om)\hookrightarrow L^2(\pO)$
      for any $r\in(0,\frac{1}{2})$, we get
      \be{3.5}
      \frac{1}{2}\int_{\pO}|\na\veps|^{2p}\frac{\pa|\na\veps|^2}{\pa\nu}
      \le \frac{p}{(p+1)^2}\io |\na|\na\veps|^{p+1}|^2
      + c_3(p) \Big\{\io|\na\veps|^2\Big\}^{p+1}
      \ee
      for some $c_3(p)>0$ and all $\eps\in(0,\epsr)$ and $t\in (0,\tme)$.
      Combining (\ref{3.3})-(\ref{3.5}) and using (\ref{1.2}), we find $c_4(p)>0$ fulfilling 
      \be{3.6}
      \frac{d}{dt}\io |\na\veps|^{2(p+1)}
      +\frac{2p}{p+1}\io |\na|\na\veps|^{p+1}|^2
      \le c_4(p)\io \ueps^2|\na\veps|^{2p} + c_4(p)
      \ee
    for all $\eps\in(0,\epsr)$ and $t\in (0,\tme)$. Finally, adding (\ref{3.2}) to
     (\ref{3.6}) yields (\ref{5.1}).   
      \qed
\mysection{Estimates away from the initial time}\label{s4}
Unfortunately, the remaining terms on the right-hand side of \eqref{5.1} cannot be easily consumed by the favorably signed terms on the left-hand side and additional steps have to be taken. These steps, however, necessitate that we can estimate $\veps$ from below in order to make manipulate the appearing exponents in a beneficial way. As we allow the initial concentration of the signal chemical to be zero at parts of the domain, we obviously cannot hope for a time-global positive lower bound, but we can exploit the smoothing action of the Neumann heat semigroup combined with the nonnegative signal production term in the second equation to derive the following pointwise estimate on time-intervals strictly away from the initial time.
\begin{lem}\label{lem2}
Let $\tau>0$. Then there exists $L(\Omega,\tau)>0$ such that if $T>\tau$,  
$h\in C^0(\bom\times(0,T))$ with $h\ge0$, and 
$z\in C^{2,1}(\bom\times(0,T))$ 
is a nonnegative solution of
\bas
\left\{
\begin{array}{ll}
z_t=\Del z -z + h(x,t),
\qquad & x\in\Om, \ t\in(0,T), \\[1mm]
\frac{\pa z}{\pa\nu}=0
\qquad & x\in\pO, \ t\in(0,T), \\[1mm]
\end{array}\right.
\eas
then 
\be{2.1}
z(x,t)
\ge L(\Omega,\tau)\cdot\inf_{s\in(0,T)}\io h(\cdot,s)
\qquad \mbox{for all $x\in\Om$ and }t\in(\tau,T).
\ee
\end{lem}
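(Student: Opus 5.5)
We plan to use the variation-of-constants representation together with a pointwise lower bound for the Neumann heat kernel. Fix $\tau>0$, $T>\tau$ and $t\in(\tau,T)$, and put $t_0:=t-\tau\in[0,T)$. Because $z$ is only known to be $C^{2,1}$ on the open time interval $(0,T)$, we will work from a shifted initial time $t_0+\eta$ for small $\eta>0$ and let $\eta\searrow0$ at the end. Writing $(e^{\sigma\Del})_{\sigma\ge0}$ for the Neumann heat semigroup, the comparison principle (or the Duhamel formula for classical solutions) gives
\bas
z(\cdot,t)=e^{-(t-t_0-\eta)}e^{(t-t_0-\eta)\Del}z(\cdot,t_0+\eta)
+\int_{t_0+\eta}^{t}e^{-(t-s)}e^{(t-s)\Del}h(\cdot,s)\,ds .
\eas
The first term is nonnegative because $z\ge0$ and the semigroup is order-preserving, so it can be discarded. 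What remains is to bound $e^{\sigma\Del}h(\cdot,s)$ from below pointwise in terms of $\io h(\cdot,s)$.

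The key ingredient is the classical lower Gaussian bound for the Neumann heat kernel $G$ on a smoothly bounded domain. For every $\sigma_0>0$ there is $c(\sigma_0)>0$ such that
\bas
G(x,y,\sigma)\ge c(\sigma_0)\qquad\mbox{for all $x,y\in\bom$ and $\sigma\ge\sigma_0$.}
\eas
The source could be e.g.\ the estimates in \cite{fujie_senba_nonlinearity2018}, Lemma 3.1 there, which the paper already uses for this kind of positivity. Alternatively, one can argue that $G$ is continuous and positive on $\bom\times\bom\times[\sigma_0/2,\sigma_0]$ by the strong maximum principle and Hopf's lemma, and that $G(\cdot,\cdot,\sigma)\to1/|\Om|$ uniformly as $\sigma\to\infty$. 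This step is the main obstacle. Short times $\sigma$ near $0$ give no uniform lower bound, which is exactly why we only integrate over $s\le t-\tau/2$.

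With this bound, for $s\in(t_0+\eta,\,t-\tau/2)$ we have $t-s\ge\tau/2$, hence
\bas
\big(e^{(t-s)\Del}h(\cdot,s)\big)(x)=\io G(x,y,t-s)h(y,s)\,dy\ge c(\tau/2)\io h(\cdot,s)\ge c(\tau/2)\inf_{s'\in(0,T)}\io h(\cdot,s').
\eas
On the remaining interval $(t-\tau/2,t)$ the integrand is nonnegative, so it can simply be dropped. Inserting this into the representation and using $e^{-(t-s)}\ge e^{-\tau}$ gives
\bas
z(x,t)\ge c(\tau/2)\,e^{-\tau}\Big(\frac{\tau}{2}-\eta\Big)\inf_{s\in(0,T)}\io h(\cdot,s).
\eas
Letting $\eta\searrow0$ yields (\ref{2.1}) with $L(\Om,\tau):=\frac{\tau}{2}e^{-\tau}c(\tau/2)$, which depends only on $\Om$ and $\tau$.

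A minor technical point remains: the Duhamel formula is applied with $h$ merely continuous on the open interval. This is legitimate on the compact time interval $[t_0+\eta,t]$, since there $h$ is bounded and $z$ is a classical solution. If necessary, one can instead compare $z$ with the classical solution of the same problem that starts from $0$ at time $t_0+\eta$.
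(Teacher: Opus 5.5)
Your proposal is correct and follows essentially the paper's argument: the variation-of-constants formula, discarding the nonnegative homogeneous part, and a uniform positive lower bound for the Neumann heat kernel at times $\ge\tau/2$. The paper gets this bound for all $\sigma\ge\tau/2$ from positivity at $\sigma=\tau/2$ via the semigroup property and $\int_\Omega G(x,y,\sigma)\,dy=1$. Your window $s\in(t-\tau,t-\tau/2)$ only ever uses $\sigma=t-s\in[\tau/2,\tau]$, where positivity and continuity of $G$ on the compact set $\overline{\Omega}\times\overline{\Omega}\times[\tau/2,\tau]$ already suffice. So your long-time convergence argument is unnecessary, and as phrased (on $[\sigma_0/2,\sigma_0]$ plus $\sigma\to\infty$) it leaves intermediate times uncovered. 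Also, $t_0=t-\tau>0$ holds strictly, so the $\eta$-shift is not needed.
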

\proof
  Let $G(x,y,t)$  denote the Neumann heat kernel on $\Om$ (e.g. \cite[p.23]{ito_diff}).  Then for any fixed $\tau\in(0,T)$, 
  the strict positivity and continuity of $G(x,y,\frac{\tau}{2})$ on $\bom\times\bom$
  (\cite[Theorem 10.1]{ito_diff}) ensure the existence of
  $c_1(\tau,\Omega)>0$ such that
  \bas
  G(x,y,\tfrac{\tau}{2})\ge c_1(\tau,\Omega)
  \qquad \mbox{for all $x,y \in\bom$}.
  \eas
  Using the semigroup property (\cite[Theorem 8.1]{ito_diff})
 \bas
 \io G(x,z,t-\tfrac{\tau}{2})\cdot G(z,y,\tfrac{\tau}{2})dz= G(x,y,t),
 \eas 
 which holds for all $x,y\in\bom$ and $t>\frac{\tau}{2}$,
 together with
  $\io G(x,y,\sigma)dy=1$ for all $x\in\bom$ and $\sigma>0$, we infer that
  \bas
  G(x,y,t)\ge c_1(\tau,\Omega)
   \qquad \mbox{for all $x, y\in\bom$ and } t>\frac{\tau}{2}.
  \eas
  Hence for any nonnegative $\vp\in C^0(\bom)$, we have
  \bea{2.2}
  (e^{t\Del} \vp) (x) 
  &=& \io G(x,y,t) \vp(y) dy\nn\\
  &\ge& c_1(\tau,\Omega)\io \vp(y) dy
  \qquad \mbox{for all $x\in\bom$ and } t>\frac{\tau}{2}.
  \eea
  We now pick an arbitrary $\delta\in(0,\tfrac{\tau}{2})$, so that $t-\tfrac{\tau}{2}>\delta$ for all $t>\tau$. Applying the variation–of–constants formula to $z$, and using the
  nonnegativity of both $z$ and $h$,
  we conclude from (\ref{2.2}) that 
  \bas
  z(\cdot,t) 
  &=& e^{t(\Del-1)}z(\cdot,\delta) + \int_{\delta}^{t}e^{(t-s)(\Del-1)}h(\cdot,s)ds\\
  &\ge& c_1(\tau,\Omega)
  \cdot\Big\{\int_{\delta}^{t-\frac{\tau}{2}}e^{-(t-s)}ds\Big\}
   \cdot \inf_{s\in(\delta,t-\frac{\tau}{2})}\io h(\cdot,s)\\
  &\ge& c_1(\tau,\Omega)\cdot(e^{-\frac{\tau}{2}}-e^{\delta-\tau})\cdot \inf_{s\in(0,T)}\io h(\cdot,s)
  \qquad\mbox{for all $x\in \Omega$, }\delta\in(0,\tfrac{\tau}{2})\mbox{ and }t\in (\tau,T)
  \eas
  Letting $\delta\searrow 0$ and setting $L(\tau,\Omega):=  c_1(\tau,\Omega)\cdot(e^{-\frac{\tau}{2}}-e^{-\tau})$, we obtain (\ref{2.1}).
  \qed
A helpful result for the upcoming calculations is the following generalization of the Gagliardo--Nirenberg inequality (\cite[Lemma A.5]{taowin_jde2014}).
\begin{lem}\label{lem-tw2014}
Let $\Om\subset \mathbb{R}^2$ be a bounded domain with smooth boundary, and let $q\in(1,\infty)$, $r\in(0,q)$ and $\alpha>0$. Then there exists $C>0$ such that for each $\eta>0$ one can pick $C=C(\eta)>0$ with the property that 
\begin{align*}
\|\rho\|_{L^q(\Om)}^q\leq \eta\big\|\nabla\rho\big\|_{L^2(\Om)}^{q-r}\big\|\rho|\ln|\rho||^\alpha\big\|_{L^r(\Om)}^r+C\|\rho\|_{L^r(\Om)}^q+C
\end{align*}
holds for all $\rho\in W^{1,2}(\Om)$.
\end{lem}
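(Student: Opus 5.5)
This is the logarithmically refined Gagliardo--Nirenberg inequality of Tao and Winkler. I would prove it by splitting $\rho$ according to the size of $|\rho|$, so that the logarithmic weight becomes a large constant factor on the set where $|\rho|$ is large. Since $q>r$, it suffices to treat $\rho\ge0$ by passing to $|\rho|$, as $|\na|\rho||\le|\na\rho|$. Fix a large threshold $M>e$, to be chosen depending on $\eta$, and write
\[
\rho=\rho_1+\rho_2,\qquad \rho_2:=(\rho-M)_+ ,\qquad \rho_1:=\min\{\rho,M\}.
\]
Then $\|\rho\|_{L^q}^q\le 2^{q}\big(\|\rho_1\|_{L^q}^q+\|\rho_2\|_{L^q}^q\big)$. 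The bounded part satisfies $\|\rho_1\|_{L^q}^q\le M^q|\Om|$, which is absorbed into the constant $C(\eta)$.

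For $\rho_2$ I would apply the standard two-dimensional Gagliardo--Nirenberg inequality in the form
\[
\|w\|_{L^q}\le c\|\na w\|_{L^2}^{a}\|w\|_{L^r}^{1-a}+c\|w\|_{L^r},\qquad a=1-\frac rq .
\]
This exponent is the scaling-correct one in two dimensions, since $a=\frac{1/r-1/q}{1/r-1/2+1/2}$, and it gives $qa=q-r$ and $q(1-a)=r$. Hence
\[
\|\rho_2\|_{L^q}^q\le c\|\na\rho\|_{L^2}^{q-r}\|\rho_2\|_{L^r}^{r}+c\|\rho\|_{L^r}^q ,
\]
using $|\na\rho_2|\le|\na\rho|$ and $\rho_2\le\rho$. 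For $r<1$ the Gagliardo--Nirenberg inequality with a quasi-norm is still valid, for instance by applying the case $r=1$ to a suitable power or by interpolation; I would cite it in that form.

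The key observation is that $\rho_2$ is supported in $\{\rho>M\}$, where $\ln\rho\ge\ln M$. Therefore
\[
\|\rho_2\|_{L^r}^r\le\int_{\{\rho>M\}}\rho^r\le(\ln M)^{-\alpha r}\,\big\|\rho|\ln\rho|^\alpha\big\|_{L^r}^r .
\]
Choosing $M$ so large that $2^q c(\ln M)^{-\alpha r}\le\eta$ then yields the claim with $C(\eta)=2^q(cM^q|\Om|+c)$. This is exactly the way $\eta$ enters through $M$.

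The main technical point is the Gagliardo--Nirenberg step for small $r\in(0,1)$. There one may substitute $\rho^{s}$ with $s$ chosen so that $rs\ge1$, or invoke the known quasi-norm version. Apart from that, the argument is bookkeeping of exponents.
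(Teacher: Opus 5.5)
Your proof is correct. The paper gives no argument of its own for this lemma; it quotes the inequality from Lemma A.5 of Tao and Winkler. What you supply is therefore a self-contained proof, and it is the standard truncation argument for logarithmically refined Gagliardo--Nirenberg inequalities:
\begin{itemize}
\item cut $\rho$ at a level $M$;
\item apply the scaling-correct two-dimensional Gagliardo--Nirenberg inequality, with $a=1-\frac rq$ so that $qa=q-r$ and $q(1-a)=r$, only to $(\rho-M)_+$;
\item use that $(\rho-M)_+$ lives on $\{\rho>M\}$ to bound its $L^r$ mass by $(\ln M)^{-\alpha r}\|\rho|\ln\rho|^\alpha\|_{L^r(\Om)}^r$.
\end{itemize}
Compared with the bare citation, your version makes the dependence on $\eta$ explicit. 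One may take $M=\exp\{(2^qc/\eta)^{1/(\alpha r)}\}$, and $C(\eta)$ is then of order $M^q|\Om|$ (taking $c\ge1$). It also shows that allowing a general exponent $\alpha>0$ costs nothing.

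One remark on the case $r\in(0,1)$. Your suggestion to substitute a power $\rho^s$ does not work as stated, because $\nabla(\rho^s)$ is not controlled by $\|\nabla\rho\|_{L^2(\Om)}$ for any $s\neq1$. The clean route is to start from the case $r=1$ and interpolate,
\[
\|w\|_{L^1(\Om)}\le\|w\|_{L^r(\Om)}^{\theta}\|w\|_{L^q(\Om)}^{1-\theta},\qquad \theta=\frac{r(q-1)}{q-r}\in(0,1).
\]
Then absorb the resulting powers of $\|w\|_{L^q(\Om)}$ by Young's inequality, which is possible because $q>1$. A short computation shows this reproduces exactly the gradient exponent $1-\frac rq$. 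Citing the known quasi-norm version of the Gagliardo--Nirenberg inequality, as you also propose, is equally fine.
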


An application of the lemma above for $\alpha=1$ to a product of the form $\rho=\varphi^{\frac{p+1}{2}}\psi^\frac12$ while also drawing on the pointwise lower bound implied by Lemma~\ref{lem2}, the inequality can be re-expressed in the following form.
\begin{lem}\label{lem4}
Let $p\ge0$ and $\Gamma>0$. Then for every $\eta>0$ there exists $C=C(p,\Gamma,\eta)>0$ such that for all 
$\vp\in C^1(\bom)$ and $\psi\in C^1(\bom)$ with $\vp>0$ and $\psi\ge\Gamma$ in $\bom$,
\bea{4.1}
\io \vp^{p+2}\psi^{\frac{p+2}{p+1}}
&\le& \eta\cdot\bigg\{\io \vp\ln\vp + \io \vp + \io \vp\psi +1 \bigg\} \cdot
\bigg\{\io \vp^{p-1}\psi|\na \vp|^2
+ \io \vp^{p+1}\psi|\na \psi|^2\bigg\}\nn\\
& &+ C \Big\{\io \vp\psi\Big\}^{p+2} + C.
\eea
\end{lem}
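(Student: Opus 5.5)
Following the hint preceding the statement, I will apply Lemma~\ref{lem-tw2014} in the case $\alpha=1$, $q=\frac{2(p+2)}{p+1}$ and $r=\frac{2}{p+1}$ to the function
\[
\rho:=\vp^{\frac{p+1}{2}}\psi^{\frac12}.
\]
Then $\rho^q=\vp^{p+2}\psi^{\frac{p+2}{p+1}}$ and $\rho^r=\vp\psi^{\frac1{p+1}}$, and $q-r=2$. Since $q>1$ and $r\in(0,q)$ for every $p\ge0$, the lemma is applicable. It yields
\[
\io \vp^{p+2}\psi^{\frac{p+2}{p+1}}\le \eta'\|\na\rho\|_{L^2(\Om)}^2\io \rho^r|\ln\rho|^r+C\|\rho\|_{L^r(\Om)}^q+C ,
\]
and it remains to translate the three terms on the right.

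The first step is the gradient term. We have $\na\rho=\frac{p+1}{2}\vp^{\frac{p-1}{2}}\psi^{\frac12}\na\vp+\frac12\vp^{\frac{p+1}{2}}\psi^{-\frac12}\na\psi$. Hence
\[
|\na\rho|^2\le c\,\vp^{p-1}\psi|\na\vp|^2+c\,\vp^{p+1}\psi^{-1}|\na\psi|^2 .
\]
Using $\psi\ge\Gamma$, we get $\psi^{-1}\le\Gamma^{-2}\psi$. This bounds $\|\na\rho\|_2^2$ by $c(p,\Gamma)$ times the second curly bracket in \eqref{4.1}.

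The second step is the lower-order term $C\|\rho\|_{L^r}^q=C(\io\vp\psi^{\frac1{p+1}})^{p+2}$. Here $\psi^{\frac1{p+1}}\le\Gamma^{\frac1{p+1}-1}\psi$ because $\psi\ge\Gamma$ and the exponent is at most $1$. So this term is at most $C(\io\vp\psi)^{p+2}$.

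The third and main step is the logarithmic factor $\io\rho^r|\ln\rho|^r=\io\vp\psi^{\frac1{p+1}}\,|\ln(\vp^{\frac{p+1}{2}}\psi^{\frac12})|^{\frac{2}{p+1}}$. It must be bounded by $c\{\io\vp\ln\vp+\io\vp+\io\vp\psi+1\}$. This is where I expect the difficulty.

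First suppose $r\le1$, that is $p\ge1$. Then $|s|^r\le 1+|s|$, and $|\ln\rho|\le\frac{p+1}{2}|\ln\vp|+\frac12|\ln\psi|$. I then split the integrand as follows.
\begin{itemize}
\item On $\{\vp\ge1\}$ one has $\vp|\ln\vp|=\vp\ln\vp$. Where $\vp<1$, $\vp|\ln\vp|\le e^{-1}$, so $\io\vp|\ln\vp|\le\io\vp\ln\vp+\frac2e|\Om|$.
\item The factor $\psi^{\frac1{p+1}}$ has to be absorbed. I use $\psi^{\frac1{p+1}}\le c\psi^{\frac12}$ when $\psi\ge1$ and $p\ge1$; if $\Gamma<1$ I use the lower bound directly instead. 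Combined with the elementary inequality $ab\le a\ln a+e^{b-1}$ for $a=\vp$, the mixed term $\vp\psi^{\frac1{p+1}}|\ln\vp|$ is controlled by $\vp\ln\vp+\vp\psi$ plus constants. Here I would use $\psi^\theta|\ln\vp|$ with $\theta<1$ and split according to whether $\vp\le\psi^{k}$ or not.
\item Similarly, $\vp\psi^{\frac1{p+1}}|\ln\psi|\le c\vp\psi+c\vp$ because $\frac1{p+1}<1$.
\end{itemize}

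For $p<1$, so that $r\in(1,2]$, this bookkeeping fails. In that case I would first apply the lemma with a larger $\alpha$, or note that the statement is only needed for $p$ large. Alternatively, I would bound $|\ln\rho|^r$ by $\eps_0\rho^{\kappa}$ on $\{\rho\ge1\}$ with a small $\kappa$ and absorb the resulting term back into the left-hand side via Hölder's inequality.

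Finally, I will choose $\eta'=\eta/c(p,\Gamma)^2$ and collect the constants. Since $\eta$ is arbitrary, this yields \eqref{4.1}.
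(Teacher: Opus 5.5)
Your gradient estimate and your bound for $\|\rho\|_{L^r(\Om)}^q$ are correct and agree with the paper. The third step, however, has a genuine gap, and it comes from choosing $\alpha=1$. The sentence before the lemma suggests that choice, but the paper's proof actually uses $\alpha=\frac p2$.

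With $\alpha=1$ you must bound $\io\vp\psi^{\frac1{p+1}}|\ln\rho|^{\frac2{p+1}}$ by a multiple of $\io\vp\ln\vp+\io\vp+\io\vp\psi+1$. For $p<2$ this is false. Take smoothed versions of $\vp=a$, $\psi=\ln a$ on a set of measure $\frac1a$, and $\vp=1$, $\psi=\Gamma$ elsewhere. The bracket is then about $2\ln a$, while the logarithmic factor is of order $(\ln a)^{\frac3{p+1}}$.

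The same example gives $\io\vp\psi^{\frac1{p+1}}|\ln\vp|\approx(\ln a)^{1+\frac1{p+1}}$. So your claim that $\vp\psi^{\frac1{p+1}}|\ln\vp|$ is controlled by $\vp\ln\vp+\vp\psi$ plus constants fails for every $p$. Splitting at $\vp=\psi^k$ does not help: on $\{\vp>\psi^k\}$ it only produces the superlinear quantity $\vp^{1+\frac1{k(p+1)}}\ln\vp$. The damage is done when you replace $|\ln\rho|^r$ by $1+|\ln\rho|$ before separating $\psi$ from the logarithm.

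Your remedies for $p<1$ do not work either. A larger $\alpha$ only makes the logarithmic factor grow faster. A term such as $\eps_0\|\na\rho\|_{L^2(\Om)}^2\io\rho^{r+\kappa}$ can neither be absorbed into $\io\rho^q$ nor controlled by the first bracket in \eqref{4.1}.

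The missing idea is to choose $\alpha$ small enough that Young's inequality with exponents $p+1$ and $\frac{p+1}p$ leaves the logarithm only to the first power. With $\alpha=\frac p2$ one gets
\[
\big\|\rho|\ln\rho|^{\frac p2}\big\|_{L^{\frac2{p+1}}(\Om)}^{\frac2{p+1}}
=\io\vp\psi^{\frac1{p+1}}|\ln\rho|^{\frac p{p+1}}
\le\io\vp\psi+\io\vp|\ln\rho| .
\]
The argument is then finished by $|\ln\rho|\le\frac{p+1}2|\ln\vp|+\frac12|\ln\psi|$, $|\ln\psi|\le\psi+|\ln\Gamma|$, and $\io\vp|\ln\vp|\le\io\vp\ln\vp+\frac2e|\Om|$.

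Any $\alpha\in(0,\frac p2]$ works here, which presupposes $p>0$. With your $\alpha=1$, the same Young step produces $|\ln\rho|^{\frac2p}$. That is why $\alpha=1$ can only succeed for $p\ge2$, and even then only if you apply Young's inequality before discarding the power $\frac2{p+1}$ on the logarithm.
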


\proof
 Fix $\eta>0$ and define
   \bas
  \eta_1= \eta_1(p,\Gamma,\eta):= \frac{\eta}{\max\Big\{\frac{p+1}{2},\frac{p+1}{2}|\Omega|, \frac{3}{2}, \frac{|\ln \Gamma|}{2}\Big\}\cdot\max\Big\{\frac{(p+1)^2}{2},\frac{1}{2\Gamma^2}\Big\}}.
   \eas
   Employing Lemma~\ref{lem-tw2014} for $q=\frac{2p+4}{p+2}$, $r=\frac{2}{p+1}$, $ \alpha=\frac{p}{2}$ and $\rho=\vp^\frac{p+1}{2}\psi^\frac{1}{2}$, we find $c_1=c_1(p,\Gamma,\eta)>0$ such that
   \be{4.2}
   \big\|\rho\big\|_{L^\frac{2p+4}{p+1}(\Om)}^\frac{2p+4}{p+1}
   \le \eta_1\big\|\na\rho\big\|_{L^2(\Om)}^2 \big\|\rho|\ln\rho|^\frac{p}{2}\big\|_{ L^\frac{2}{p+1}(\Om)}^\frac{2}{p+1}
   +c_1\big\|\rho\big\|_{L^\frac{2}{p+1}(\Om)}^\frac{2p+4}{p+1}+c_1.
   \ee
  We are left with estimating the terms appearing on the right-hand side of \eqref{4.2} and start with the norm containing the logarithm.
  Since $\psi\ge\Gamma$ in $\overline{\Omega}$, we have $|\ln\psi|\le \psi+|\ln \Gamma|$.
  Using Young’s inequality and $|s\ln s|\le 1/e\le 1/2$ for $s\in(0,1]$, we obtain
\begin{align}\label{4.3}
&\Big\|\vp^{\frac{p+1}{2}}\psi^\frac{1}{2}|\ln(\vp^{\frac{p+1}{2}}\psi^\frac{1}{2})|^\frac{p}{2}\Big\|_{{L^\frac{2}{p+1}(\Om)}}^\frac{2}{p+1}\\
    =\ &\io \vp\psi^\frac{1}{p+1}|\ln (\vp^{\frac{p+1}{2}}\psi^\frac{1}{2})|^\frac{p}{p+1}\nn\\
    \le\ & \io \vp|\ln (\vp^{\frac{p+1}{2}}\psi^\frac{1}{2})| + \io \vp\psi\nn\\
    =\ & \io \vp \Big|\frac{p+1}{2}\ln\vp + \frac{1}{2}\ln \psi\Big| + \io \vp\psi\nn\\
    \le\ & \frac{p+1}{2}\cdot\bigg\{\int_{\{\vp>1\}} \vp\ln\vp - \int_{\{\vp\le1\}} \vp\ln\vp\bigg\}
    + \frac{1}{2}\io \{|\ln \Gamma|+\psi\}\cdot\vp
    + \io \vp\psi\nn\\
    \le\ & \frac{p+1}{2}\cdot\bigg\{\io \vp\ln\vp 
    - 2\int_{\{\vp\le1\}} \vp\ln\vp\bigg\}
    +\frac{3}{2}\io  \vp\psi
    +\frac{|\ln \Gamma|}{2}\io \vp\nn\\
    \le\ & c_2\cdot\bigg\{\io \vp\ln\vp + \io \vp + \io \vp\psi +1  \bigg\}
\end{align}
 where 
 $c_2=c_2(p,\Gamma):= \max\Big\{\frac{p+1}{2},\frac{p+1}{2}|\Omega|, \frac{3}{2}, \frac{|\ln \Gamma|}{2}\Big\}$.
 Next, again using $\psi\ge\Gamma$, we obtain
\begin{align}\label{4.4}
 \|\na(\vp^{\frac{p+1}{2}}\psi^\frac{1}{2})\|_{L^2(\Om)}^2
 &= \bigg\|\frac{p+1}{2}\vp^\frac{p-1}{2}\psi^\frac{1}{2}\na\vp
 +\frac{1}{2}\vp^\frac{p+1}{2}\psi^{-\frac{1}{2}}\na\psi\bigg\|_{L^2(\Om)}^2\nn\\
 &\le \frac{(p+1)^2}{2}\io \vp^{p-1}\psi|\na\vp|^2
 +\frac{1}{2}\io \vp^{p+1}\psi^{-1}|\na\psi|^2\nn\\
 &\le \frac{(p+1)^2}{2}\io \vp^{p-1}\psi|\na\vp|^2
 + \frac{1}{2\Gamma^2} \io \vp^{p+1}\psi|\na\psi|^2\nn\\
 &\le c_3\cdot\bigg\{\io \vp^{p-1}\psi|\na\vp|^2
 + \io \vp^{p+1}\psi|\na\psi|^2\bigg\}
\end{align}
 with
 $c_3=c_3(p,\Gamma):= \max\Big\{\frac{(p+1)^2}{2}, \frac{1}{2\Gamma^2}\Big\}$ and, similarly,
  \bea{4.5}
 \|\vp^{\frac{p+1}{2}}\psi^\frac{1}{2}\|_{L^\frac{2}{p+1}(\Om)}^\frac{2(p+2)}{p+1}
 = \Big\{\io \vp\psi^\frac{1}{p+1}\Big\}^{p+2}\le \Gamma^{-\frac{p(p+2)}{p+1}}\Big\{\io \vp\psi\Big\}^{p+2}.
 \eea
Since
 \be{4.6}
  \|\vp^{\frac{p+1}{2}}\psi^\frac{1}{2}\|_{L^\frac{2(p+2)}{p+1}(\Om)}^\frac{2(p+2)}{p+1}
  = \io \vp^{p+2}\psi^{\frac{p+2}{p+1}},
 \ee
we find from combining \eqref{4.2}--\eqref{4.5}, and using the definitions of
 $\eta_1$, $c_1$, $c_2$ and $c_3$, that
 \bas
 \io \vp^{p+2}\psi^{\frac{p+2}{p+1}}
 &\le& \eta_1\cdot c_2c_3\cdot
 \bigg\{\io \vp\ln\vp + \io \vp + \io \vp\psi +1  \bigg\}
 \cdot \bigg\{\io \vp^{p-1}\psi|\na\vp|^2
  + \io \vp^{p+1}\psi|\na\psi|^2\bigg\}\\
  & &+ c_1\Gamma^{-\frac{p(p+2)}{p+1}}\Big\{\io \vp\psi\Big\}^{p+2}
  +c_1\\
  &\le& \eta\cdot\bigg\{\io \vp\ln\vp + \io \vp + \io \vp\psi +1 \bigg\} \cdot
  \bigg\{\io \vp^{p-1}\psi|\na \vp|^2
  + \io \vp^{p+1}\psi|\na \psi|^2\bigg\}\\
  & &+ C \Big\{\io \vp\psi\Big\}^{p+2}
  +C
 \eas
 with $C= C(p,\Gamma,\eta):= c_1\cdot(1+\Gamma^{-\frac{p(p+2)}{p+1}})>0$ as claimed.
 \qed
These  preparations at hand, we can now go back to the differential inequality from Lemma~\ref{lem5} to derive new higher order bounds for both $\ueps$ and $\veps$.
\begin{lem}\label{lem6}
Let $p\in(1,\infty)$ and $\tau>0$. There exists $C(p,\tau)>0$ 
such that
\be{6.1}
\|\ueps(\cdot,t)\|_{L^p(\Om)}\le C(p,\tau)
\qquad \mbox{for all $t\in(\tau,\tme)$ and } \eps\in(0,\epsr),
\ee
and 
\be{6.2}
\|\na\veps(\cdot,t)\|_{L^{2p+2}(\Om)}\le C(p,\tau)
\qquad \mbox{for all $t\in(\tau,\tme)$ and } \eps\in(0,\epsr).
\ee
%
\end{lem}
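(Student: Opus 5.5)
The approach is to start from the differential inequality \eqref{5.1} of Lemma~\ref{lem5}, restricted to times $t>\tau/2$. On that range a uniform lower bound for $\veps$ is available. Indeed, Lemma~\ref{lem2} with $h=\ueps$, $z=\veps$, together with \eqref{mass} (note $\io\ueps=\meps\ge m>0$), yields $\veps\ge\Gamma:=L(\Om,\tau/2)\,m$ in $\Om\times(\tau/2,\tme)$ for all $\eps\in(0,\epsr)$. We then absorb the two bad terms on the right of \eqref{5.1} into the dissipative terms on the left.

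For the term $\frac{p(p-1)}{2}\io\ueps^{p+1}\veps|\na\veps|^2$ we cannot use the left-hand side directly, since it contains only the $u$-gradient part. The remedy is the identity
\[
\io\ueps^{p-1}\veps\big|\na\ueps-\ueps\na\veps\big|^2=\io\ueps^{p-1}\veps|\na\ueps|^2-2\io\ueps^p\veps\na\ueps\cdot\na\veps+\io\ueps^{p+1}\veps|\na\veps|^2 .
\]
We would therefore redo \eqref{3.2} more carefully, keeping the exact expression $-(p-1)\io\ueps^{p-1}\veps|\na\ueps|^2+(p-1)\io\ueps^p\veps\na\ueps\cdot\na\veps$. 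Alternatively, we estimate $\io\ueps^{p+1}\veps|\na\veps|^2$ by Young's inequality as $\le\eta\io\ueps^{p+1}\veps|\na\veps|^2\cdot(\dots)$.

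The more robust route is to bound both right-hand terms by quantities of the form $\io\ueps^{p+2}\veps^{\frac{p+2}{p+1}}$ and $\io|\na\veps|^{2p+4}$. Concretely, Young's inequality gives
\[
\io\ueps^{p+1}\veps|\na\veps|^2\le\io\ueps^{p+2}\veps^{\frac{p+2}{p+1}}+\io\veps^{?}|\na\veps|^{2(p+2)}.
\]
Here the power of $\veps$ is controlled by Lemma~\ref{lem1} and by a Gagliardo--Nirenberg bound. The term $|\na\veps|^{2p+4}=\big(|\na\veps|^{p+1}\big)^{\frac{2p+4}{p+1}}$ is handled by Gagliardo--Nirenberg in 2D with the $L^{2/(p+1)}$-norm of $|\na\veps|^{p+1}$, which is bounded via \eqref{1.2}. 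This gives $\io|\na\veps|^{2p+4}\le\eta\io|\na|\na\veps|^{p+1}|^2\cdot\|\na\veps\|_{L^2}^{2}+C$, which is absorbable. Similarly $C\io\ueps^2|\na\veps|^{2p}\le\io\ueps^{p+2}+C\io|\na\veps|^{2p+4}$, and $\io\ueps^{p+2}\le\Gamma^{-\frac{p+2}{p+1}}\io\ueps^{p+2}\veps^{\frac{p+2}{p+1}}$.

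Next we apply Lemma~\ref{lem4} with $\vp=\ueps$, $\psi=\veps$. The prefactor $\io\ueps\ln\ueps+\io\ueps+\io\ueps\veps+1$ is bounded uniformly by \eqref{1.1}, \eqref{mass} and \eqref{1.3}, and $\{\io\ueps\veps\}^{p+2}$ is bounded by \eqref{1.3}. Hence $\io\ueps^{p+2}\veps^{\frac{p+2}{p+1}}\le\eta c\{\io\ueps^{p-1}\veps|\na\ueps|^2+\io\ueps^{p+1}\veps|\na\veps|^2\}+C$. The main obstacle is the self-referential appearance of $\io\ueps^{p+1}\veps|\na\veps|^2$, which is not itself on the left of \eqref{5.1}. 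We resolve this by estimating it once more as $\le\frac12\io\ueps^{p+2}\veps^{\frac{p+2}{p+1}}+C\io\veps^{\kappa}|\na\veps|^{2p+4}$ with suitable $\kappa$. The $\veps$-weight would be removed using an $L^\infty$ bound for $\veps$; if that is unavailable, we choose the exponents so that only powers of $\veps$ lying in $L^q$ via \eqref{1.2} appear. Taking $\eta$ small then closes the loop.

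Finally we obtain $y'(t)+y(t)\le C$ for $y=\io\ueps^p+\io|\na\veps|^{2p+2}$ on $(\tau/2,\tme)$, using that the dissipation dominates $y$ by the same interpolations. To get a starting value, we note that $\int_{\tau/2}^{\tau}y$ is bounded, first for $p$ near $1$ from \eqref{1.1}, \eqref{1.2} and \eqref{1.6}, and then inductively in $p$ using the bound for a smaller $p$ on a smaller time shift. Hence some $t_0\in(\tau/2,\tau)$ has $y(t_0)\le C$, and an ODE comparison yields \eqref{6.1} and \eqref{6.2}.
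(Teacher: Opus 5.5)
Your absorption scheme is essentially the paper's. You use Young's inequality to reduce the two bad terms to $\io\ueps^{p+2}\veps^{\frac{p+2}{p+1}}$ (or $\io\ueps^{p+2}$) and $\io|\na\veps|^{2p+4}$. You then apply Lemma~\ref{lem4} with the uniformly bounded prefactor and Gagliardo--Nirenberg for $|\na\veps|^{p+1}$ against $\|\na\veps\|_{L^2}$. Finally you absorb the recurring term $\io\ueps^{p+1}\veps|\na\veps|^2$ into itself after taking $\eta$ small.

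Two small corrections there. First, with the conjugate exponents $\frac{p+2}{p+1}$ and $p+2$ no power of $\veps$ multiplies $|\na\veps|^{2p+4}$, so no $L^\infty$ bound on $\veps$ is needed. Second, the small Young coefficient must sit on $\io|\na\veps|^{2p+4}$, not on $\io\ueps^{p+2}$. The reason is that the Gagliardo--Nirenberg constant in $\io|\na\veps|^{2p+4}\le c\|\na\veps\|_{L^2}^2\io|\na|\na\veps|^{p+1}|^2+c$ is fixed, not an arbitrary $\eta$.

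The genuine gap is in your last step. With only linear damping $y'+y\le C$ on $(\tau/2,\tme)$, you need a uniform bound on $y$ at some $t_0\in(\tau/2,\tau)$. Your base case claims that $\int_{\tau/2}^{\tau}y$ is bounded for $p$ near $1$ by \eqref{1.1}, \eqref{1.2}, \eqref{1.6}, and this is not justified:
\begin{itemize}
\item \eqref{1.1} is an $L\log L$ bound and gives no $L^p$ bound for any $p>1$.
\item \eqref{1.2} controls $\na\veps$ only in $L^2$, whereas $y$ contains $\io|\na\veps|^{2p+2}$ with $2p+2>4$.
\item \eqref{1.6} controls only the combination $\veps|\na\ueps-\ueps\na\veps|^2$, which does not split into the needed separate bounds without further arguments you do not supply.
\end{itemize}
Starting instead from $t=0$, where the data are good, is not possible either, since near $t=0$ there is no $\eps$-uniform lower bound for $\veps$. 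So the induction in $p$ has no valid starting point.

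The missing idea, which is what the paper uses, is that the same interpolations give superlinear rather than linear damping. Using only $\io\ueps=\meps$ and $\|\na\veps\|_{L^2}\le C$, Gagliardo--Nirenberg yields
$\{\io\ueps^p\}^{\frac{p+1}{p-1}}\le c\|\na\ueps^{\frac{p+1}{2}}\|_{L^2}^2+c$
and
$\{\io|\na\veps|^{2p+2}\}^{\frac{p+1}{p}}\le c\|\na|\na\veps|^{p+1}\|_{L^2}^2+c$.
With $\veps\ge\Gamma$, the dissipation therefore dominates $c\,y^{\frac{p+1}{p}}-c$. The resulting inequality $y'+c\,y^{\frac{p+1}{p}}\le C$ on $(\tau/2,\tme)$ gives, by ODE comparison, a bound on $[\tau,\tme)$ that is independent of $y(\tau/2)$. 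So neither a starting value nor an induction in $p$ is needed.
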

\proof
    From (\ref{mass}) and Lemma \ref{lem1}, we may fix $c_1>0$ such that
    \be{6.4}
    \io \ueps(\cdot,t)
    +\io \ueps(\cdot,t)\ln\ueps(\cdot,t)
    +\io \ueps(\cdot,t)\veps(\cdot,t)
    +\io \veps^2(\cdot,t)
    +\io|\na\veps(\cdot,t)|^2
    \le c_1
    \ee
    for all $t\in (0,\tme)$ and $\eps\in(0,\epsr)$. 
    Moreover, Lemma~\ref{lem2} ensures that for each fixed $\tau>0$, there exists $c_2=c_2(\tau)>0$ fulfilling 
    \be{6.5}
    \veps(x,t)\ge c_2
    \qquad \mbox{for all $x\in\Om$ and $t\in\big(\frac{\tau}{2},\tme\big)$ and } \eps \in (0, \epsr).
    \ee
    
    \medskip
    The Gagliardo-Nirenberg inequality yields a constant $c_3=c_3(p)$ with the property that
    \be{6.6}
    \|\vp\|_{L^{\frac{2p+4}{p+1}}(\Om)}^\frac{2p+4}{p+1}
    \le c_3\|\na\vp\|_{L^2(\Om)}^2
    \|\vp\|_{L^\frac{2}{p+1}(\Om)}^\frac{2}{p+1}
    + c_3 \|\vp\|_{L^\frac{2}{p+1}(\Om)}^\frac{2p+4}{p+1}
    \qquad \mbox{for all } \vp\in W^{1,2}(\Om).
    \ee
    Invoking Lemma \ref{lem5}, followed by Young’s inequality, we find
    constants $c_4=c_4(p)>0$ and $c_5=c_5(p)>0$ such that
    \bea{6.7}
    & &\hs{-15mm}
    \frac{d}{dt}\bigg\{\io \ueps^p
    + \io |\na\veps|^{2p+2}\bigg\}
    +\frac{p(p-1)}{2}\io \ueps^{p-1}\veps |\na\ueps|^2
    +\frac{2p}{p+1}\io |\na|\na\veps|^{p+1}|^2\nn\\
    &\le&\frac{p(p-1)}{2}\io \ueps^{p+1} \veps |\na\veps|^2
        + c_4\io \ueps^2|\na\veps|^{2p} + c_4.\nn\\
    &\le& \frac{p(p-1)}{2}\io \ueps^{p+1} \veps |\na\veps|^2
    +  \frac{p}{2(p+1)c_1c_3}\io |\na\veps|^{2p+4}
    + c_5\io \ueps^{p+2}
    +c_5
    \eea
 for all $t\in(0,\tme)$ and $\eps\in(0,\epsr)$.
  
  \medskip
  
    To estimate the first term on the right-hand side, we use Young's inequality and obtain
    $c_6=c_6(p)$ such that 
   \be{6.12}
   \frac{p(p-1)}{2}\io \ueps^{p+1} \veps |\na\veps|^2
      \le \frac{p}{4(p+1)c_1c_3}\io |\na\veps|^{2p+4}
      + c_6\io \ueps^{p+2}\veps^{\frac{p+2}{p+1}}
   \ee
   for all $t\in(\frac{\tau}{2},\tme)$ and $\eps\in(0,\epsr)$. Here, in light of \eqref{6.5} we can employ Lemma~\ref{lem4} with $\varphi=u$, $\psi=v$ and $\eta=\eta_1=\eta_1(p)= \frac{1}{(c_1+1)c_6}\min\{\frac{p(p-1)}{8}, \frac{p-1}{2}\}$
to find upon combining with \eqref{6.4} that there is $c_7=c_7(p,\tau)>0$ such that
   \begin{align*}
   c_6\io \ueps^{p+2}\veps^{\frac{p+2}{p+1}}
   &\le \eta_1\cdot(1+c_1)\cdot c_6\cdot\Big\{
   \io \ueps^{p-1}\veps |\na\ueps|^2
   +\io \ueps^{p+1} \veps |\na\veps|^2\Big\}
  + c_7\\
  &\le  \frac{p(p-1)}{8}  \io \ueps^{p-1}\veps |\na\ueps|^2
  + \frac{p(p-1)}{4} \io \ueps^{p+1} \veps |\na\veps|^2
  + c_7
   \end{align*}
for all $t\in(\frac{\tau}{2},\tme)$ and $\eps\in(0,\epsr)$.
   Inserting this into (\ref{6.12}) and rearranging yields
   \begin{align}\label{6.8}
    \frac{p(p-1)}{2} \io \ueps^{p+1} \veps |\na\veps|^2
   \le  \frac{p}{2(p+1)c_1c_3}\io |\na\veps|^{2p+4}
   + \frac{p(p-1)}{4}  \io \ueps^{p-1}\veps |\na\ueps|^2
   + 2c_7
   \end{align}
for all $t\in(\frac{\tau}{2},\tme)$ and $\eps\in(0,\epsr)$.
  \medskip
  
  Next, in order to control the second gradient integral on the right of \eqref{6.7}
  and in the first term on the right-hand side of \eqref{6.8}, we combine
  (\ref{6.6}) with (\ref{6.4}) and obtain
   \begin{align}\label{6.9}
  \frac{p}{(p+1)c_1c_3}\io \big|\na\veps\big|^{2p+4}
  &= \frac{p}{(p+1)c_1c_3} \big\||\na\veps|^{p+1}\big\|_{L^{\frac{2p+4}{p+1}}(\Om)}^\frac{2p+4}{p+1}\nn\\
  &\le \frac{p}{(p+1)c_1}\cdot
   \bigg\{\big\|\na|\na\veps|^{p+1}\big\|_{L^2(\Om)}^2
      \big\||\na\veps|^{p+1}\big\|_{L^\frac{2}{p+1}(\Om)}^\frac{2}{p+1}
      +\big\||\na\veps|^{p+1}\big\|_{L^\frac{2}{p+1}(\Om)}^\frac{2p+4}{p+1}\bigg\}\nn\\
  &\le \frac{p}{p+1}\io \big|\na|\na\veps|^{p+1}\big|^2
  + c_8
  \end{align}
  for all $t\in (\frac{\tau}{2},\tme)$ and $\eps\in(0,\epsr)$, with $c_8=c_8(p)>0$.
  
  \medskip
  
  It remains to control the integral without gradient contribution on the right-hand side of (\ref{6.7}). We invoke Lemma~\ref{lem-tw2014} for $\rho=\ueps^\frac{p+1}{2}$, $q=\frac{2p+4}{p+2}$, $r=\frac{2}{p+1}$, $\alpha=\frac{p+1}{2}$ and $\eta=\eta_2=\eta_2(p,\tau):=\frac{p(p-1)c_2}{c_5 (p+1)^3(2c_1+\frac{2}{e}|\Omega|)}$ to find $c_9=c_9(p,\tau)>0$ such that 
  \begin{align}\label{6.95}
  c_5\io \ueps^{p+2}
  &= c_5\Big\|\ueps^\frac{p+1}{2}\Big\|_{L^\frac{2p+4}{p+1}}^\frac{2p+4}{p+1}\nn\\
  &\le \eta_2\cdot c_5\Big\|\na\ueps^\frac{p+1}{2}\Big\|_{L^2(\Om)}^2
  \cdot \Big\|\ueps^{\frac{p+1}{2}}|\ln\ueps^\frac{p+1}{2}|^{\frac{p+1}{2}}\Big\|_{L^\frac{2}{p+1}(\Om)}^\frac{p+1}{2}
  + c_5c_9\Big\|\ueps^{\frac{p+1}{2}}\Big\|_{L^\frac{2}{p+1}(\Om)}^\frac{2p+4}{p+1}+ c_5c_9
  \end{align}
  for all $t\in(\frac{\tau}{2},\tme)$ and $\eps\in(0,\epsr)$. Herein, we can use \eqref{6.4} to estimate
\begin{align*}
\Big\|\ueps^{\frac{p+1}{2}}|\ln\ueps^\frac{p+1}{2}|^\frac{p+1}{2}\Big\|_{L^\frac{2}{p+1}(\Om)}^\frac{2}{p+1}&=\io \ueps\big|\ln\ueps^{\frac{p+1}{2}}\big|\\
&=\frac{p+1}{2}\io\ueps\big|\ln\ueps\big|\\
&\leq \frac{p+1}{2}\Big(-2\int\limits_{\{\ueps<1\}}\ueps\ln\ueps+\io\ueps\ln\ueps\Big)\\
&\leq\frac{p+1}{2}\Big(\frac{2}{e}|\Omega|+c_1\Big)
\end{align*} 
for all $t\in(\frac{\tau}{2},\tme)$ and $\eps\in(0,\epsr)$ and infer from \eqref{6.95} that
  \begin{align}\label{6.10}
  c_5\io \ueps^{p+2}
  &\le \eta_2\cdot c_5\cdot\frac{(p+1)}{2}\Big(c_1+\frac{2}{e}|\Omega|\Big)
 \io|\na\ueps^\frac{p+1}{2}|^2
  +c_{10}\nn\\
  &\le \frac{p(p-1)c_2}{2(p+1)^2}\io|\na\ueps^\frac{p+1}{2}|^2
  + c_{10}
  \qquad \mbox{for all $t\in\big(\frac{\tau}{2},\tme\big)$ and } \eps\in(0,\epsr),
  \end{align}
  with $c_{10}=c_{10}(p,\tau):=c_5c_{9}\cdot
  \{1+ c_1^{p+2}\}>0$.
  
  \medskip
  
  Collecting (\ref{6.7})-(\ref{6.10}), we conclude that
  for all $t\in\big(\frac{\tau}{2}, \tme\big)$,
  \begin{align}\label{6.11}
       &\frac{d}{dt}\bigg\{\io \ueps^p
      + \io |\na\veps|^{2p+2}\bigg\}
      +\frac{p(p-1)}{2}\io \ueps^{p-1}\veps |\na\ueps|^2
      +\frac{2p}{p+1}\io |\na|\na\veps|^{p+1}|^2\nn\\
      \le\ & \frac{p}{p+1}\io |\na|\na\veps|^{p+1}|^2
        +\frac{p(p-1)}{4}  \io \ueps^{p-1}\veps |\na\ueps|^2
          +\frac{p(p-1)c_2}{2(p+1)^2}\io|\na\ueps^\frac{p+1}{2}|^2
          +c_{11}
  \end{align}
    with $c_{11}=c_{11}(p,\tau):=2c_7+c_8+ c_{10}$. 
    
    \medskip
    
   We now derive from (\ref{6.11}) a superlinear damped ODI for
   \bas
   \yeps(t):= \io \ueps^p(\cdot,t)
   + \io |\na\veps(\cdot,t)|^{2p+2},
   \qquad t\in\Big(\frac{\tau}{2},\tme\Big)\ ,\ \eps\in(0,\epsr).
   \eas
   For $t\in\big(\frac{\tau}{2}, \tme\big)$ and $\eps\in(0,\epsr)$, we have
    \bas
    \yeps^{\frac{p+1}{p}}(t)
    &=&\Big\{\io \ueps^p(\cdot,t)
        + \io |\na\veps(\cdot,t)|^{2p+2}\Big\}^{\frac{p+1}{p}}\\
    &\le& 2^{\frac{p+1}{p}}\Big\{\io \ueps^p(\cdot,t)\Big\}^{\frac{p+1}{p}}
    + 2^{\frac{p+1}{p}}\Big\{\io |\na\veps(\cdot,t)|^{2p+2}\Big\}^{\frac{p+1}{p}}\\
    &\le& 2^{\frac{p+1}{p}}\Big\{\io \ueps^p(\cdot,t)\Big\}^{\frac{p+1}{p-1}}
        + 2^{\frac{p+1}{p}}\Big\{\io |\na\veps(\cdot,t)|^{2p+2}\Big\}^{\frac{p+1}{p}}
        +2^{\frac{p+1}{p}}
    \eas   
   Here, again using the Gagliardo--Nirenberg inequality and (\ref{6.4}), we can find positive constants $c_{12},\dots,c_{16}$ depending only on $p$ such that
  \bas
  2^{\frac{p+1}{p}}\Big\{\io \ueps^p(\cdot,t)\Big\}^{\frac{p+1}{p-1}}
  &=& 2^{\frac{p+1}{p}} \big\|\ueps^\frac{p+1}{2}\big\|_{L^\frac{2p}{p+1}(\Om)}^\frac{2p}{p-1}\\
  &\le& c_{12} \big\|\na\ueps^\frac{p+1}{2}\big\|_{L^2(\Om)}^2
  \cdot \big\|\ueps^\frac{p+1}{2}\big\|_{L^\frac{2}{p+1}(\Om)}^\frac{2}{p-1}
  + c_{12} \big\|\ueps^\frac{p+1}{2}\big\|_{L^\frac{2}{p+1}(\Om)}^\frac{2p}{p-1}\\
  &\le& c_{13} \big\|\na\ueps^\frac{p+1}{2} \big\|_{L^2(\Om)}^2+ c_{13}
  \eas
  and
  \bas
   2^{\frac{p+1}{p}}\Big\{\io |\na\veps(\cdot,t)|^{2p+2}\Big\}^{\frac{p+1}{p}}
   &=&  2^{\frac{p+1}{p}} \big\||\na\veps|^{p+1}\big\|_{L^2(\Om)}^{\frac{2(p+1)}{p}}\\
   &\le& c_{14}\big\|\na|\na\veps|^{p+1}\big\|_{L^2(\Om)}^2\cdot
    \big\||\na\veps|^{p+1}\big\|_{L^\frac{2}{p+1}(\Om)}^{\frac{2}{p}}
    + c_{14} \big\||\na\veps|^{p+1}\big\|_{L^\frac{2}{p+1}(\Om)}^{\frac{2(p+1)}{p}}\\
    &\le& c_{15}\big\|\na|\na\veps|^{p+1}\big\|_{L^2(\Om)}^2
    + c_{15}.
  \eas
  Thus, using (\ref{6.11}) and (\ref{6.5}), if we set
  $c_{16}:=c_{16}(p,\tau)=\min\{\frac{p(p-1)c_2}{(p+1)^2}, \frac{2p}{p+1}\}$,
  we obtain, for all $t\in (\frac{\tau}{2},\tme)$ and $\eps\in(0,\epsr)$,
  \bas
  \frac{c_{16}}{c_{13}+c_{15}}\cdot\yeps^{\frac{p+1}{p}}(t)
  -c_{17}
  \le {c_{16}}\io|\na\ueps^\frac{p+1}{2}|^2
  +c_{16}\io |\na|\na\veps|^{p+1}|^2
  \eas
for some $c_{17}=c_{17}(p)$, and hence (\ref{6.11}) yields
  \be{6.13}
  \yeps'(t)
  + \frac{c_{16}}{c_{13}+c_{15}}\cdot\yeps^{\frac{p+1}{p}}(t)
  \le c_{11}+c_{17}
  \ee 
  for all $t\in (\frac{\tau}{2},\tme)$ and $\eps\in(0,1)$.
A comparison argument (e.g. \cite[Lemma 2.5]{taowin_3mas}) applied to
 (\ref{6.13}) shows that
\bas
\yeps(t)\le c_{18}
\qquad \mbox{$t\in [\tau,\tme)$ and $\eps\in(0,\epsr)$,}
\eas
for some $c_{18}=c_{18}(p,\tau)>0$. In particular,
\bas
\|\ueps(\cdot,t)\|_{L^p(\Om)}^p
+ \|\na \veps(\cdot,t)\|_{L^{2p+2}(\Om)}^{2p+2}
\le c_{18}
\eas
for all $t\in(\tau,T_{\max,\varepsilon})$, which concludes the proof.
\qed

A direct consequence of the bounds obtained above is a corresponding bound of $\veps$ in $W^{1,\infty}(\Om)$.
\begin{lem}\label{lem17}
Let $\tau>0$. There exists $C(\tau)>0$ with the property that 
 \be{7.2}
  \|\veps(\cdot,t)\|_{W^{1, \infty}(\Om)}
  \le C(\tau).
 \qquad \mbox{for all $t\in(\tau,\tme)$ and } \eps\in(0,\epsr).
 \ee
\end{lem}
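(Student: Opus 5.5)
We plan to obtain the bound from the variation-of-constants representation of $\veps$, started at a time strictly after the initial time. There Lemma~\ref{lem6} already provides uniform $L^p$ bounds for $\ueps$, and standard Neumann heat semigroup smoothing estimates then yield $W^{1,\infty}$ control.

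Fix $\tau>0$ and let $t\in(\tau,\tme)$. We set $t_0:=\max\{\frac{\tau}{2},t-1\}$, so that $t-t_0\in[\frac{\tau}{2},1]$, and write
\[
\veps(\cdot,t)=e^{(t-t_0)(\Del-1)}\veps(\cdot,t_0)+\int_{t_0}^{t}e^{(t-s)(\Del-1)}\ueps(\cdot,s)\,ds .
\]
For the homogeneous part we use the $L^2$--$W^{1,\infty}$ smoothing estimate $\|\na e^{\sigma\Del}w\|_{L^\infty(\Om)}\le c(1+\sigma^{-\frac12-\frac{1}{2}})\|w\|_{L^2(\Om)}$, together with $\|e^{\sigma\Del}w\|_{L^\infty(\Om)}\le c(1+\sigma^{-\frac12})\|w\|_{L^2(\Om)}$ in two dimensions. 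We feed in the bound $\|\veps(\cdot,t_0)\|_{W^{1,2}(\Om)}\le C(m)$ from \eqref{1.2}. Since $\sigma=t-t_0\ge\frac{\tau}{2}$, this term is bounded by a constant depending only on $\tau$. Alternatively, one could avoid the singular factor by using the $W^{1,2p+2}$ bound \eqref{6.2} at time $t_0$ combined with the Morrey embedding $W^{1,2p+2}(\Om)\hookrightarrow L^\infty(\Om)$.

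For the inhomogeneous part we choose some $p>2$, say $p=3$, and apply the gradient estimate $\|\na e^{\sigma\Del}w\|_{L^\infty(\Om)}\le c(1+\sigma^{-\frac12-\frac{1}{p}})\|w\|_{L^p(\Om)}$. Because $s\ge t_0\ge\frac{\tau}{2}$, Lemma~\ref{lem6} (applied with $\frac{\tau}{2}$ in place of $\tau$) gives $\|\ueps(\cdot,s)\|_{L^p(\Om)}\le C(p,\frac{\tau}{2})$. This yields
\[
\int_{t_0}^t\|\na e^{(t-s)(\Del-1)}\ueps(\cdot,s)\|_{L^\infty(\Om)}\,ds\le c\,C(p,\tfrac{\tau}{2})\int_0^1\big(1+\sigma^{-\frac12-\frac1p}\big)\,d\sigma<\infty,
\]
and the integral converges since $\frac12+\frac1p<1$. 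The $L^\infty$ norm of the Duhamel term is handled in the same way using $\|e^{\sigma\Del}w\|_{L^\infty(\Om)}\le c(1+\sigma^{-\frac1p})\|w\|_{L^p(\Om)}$. Adding the two contributions gives \eqref{7.2}, with a constant independent of $\eps\in(0,\epsr)$ and of $t$.

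The main point needing care is that all estimates must be used only on $(\frac{\tau}{2},\tme)$, where Lemma~\ref{lem6} applies, and never near $t=0$. The choice of $t_0$ handles this. Capping $t-t_0\le1$ keeps the Duhamel integral over an interval of length at most one, so no decay of the semigroup is needed beyond the damping factor $e^{-\sigma}$. When $t-1<\frac{\tau}{2}$ the interval is shorter, which only helps. The remaining ingredients are the standard Neumann semigroup estimates, and these are independent of $\eps$.
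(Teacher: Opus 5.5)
Your argument is correct and is essentially the paper's proof. Both use a Duhamel representation started after time $\frac{\tau}{2}$ and the Neumann semigroup $L^p\to W^{1,\infty}$ smoothing with the integrable singularity $\sigma^{-\frac12-\frac1p}$ for $p>2$; both feed \eqref{1.2} into the free part and Lemma~\ref{lem6} (with $\frac{\tau}{2}$) into the source term. The paper simply uses the fixed window $(t-\frac{\tau}{2},t)$ instead of your $t_0=\max\{\frac{\tau}{2},t-1\}$, and one small slip is harmless: for $\tau>2$ you get $t-t_0=1<\frac{\tau}{2}$, so the correct range is $t-t_0\in[\min\{\frac{\tau}{2},1\},1]$, which still bounds the homogeneous term by a constant depending only on $\tau$.
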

\proof
     First we recall that, for any fixed $p>2$,
     Lemma 1.3 of \cite{win_jde2010} ensures the existence of $c_1(\tau)>0$ 
     such that
     \bas
     \|e^{\sigma\Del}\vp\|_{W^{1,\infty}(\Om)}
     \le c_1(\tau) \sigma^{-\frac{1}{2}-\frac{1}{p}} \|\vp\|_{L^p(\Om)}
     \qquad\mbox{for all $\vp\in C^0(\bom)$ and } \sigma\in\Big(0,\frac{\tau}{2}\Big].
     \eas
	Employing the variation-of-constant formula while drawing on \eqref{1.2} and \eqref{6.1}, we obtain $c_2(\tau)>0$ such that
    \bas
     \|\veps(\cdot,t)\|_{W^{1,\infty}(\Om)}
     &\le& \Big\| e^{\frac{\tau}{2}(\Del-1)}\veps(\cdot,t-\frac{\tau}{2})\Big\|_{W^{1,\infty}(\Om)}
     + \int_{t-\frac{\tau}{2}}^t \Big\| e^{(t-s)(\Del-1)}\ueps(\cdot,s)\Big\|_{W^{1,\infty}(\Om)}ds\\
     &\le& c_1(\tau)\cdot\Big(\frac{\tau}{2}\Big)^{-\frac{1}{2}-\frac{1}{p}}
     \cdot\Big\|\veps(\cdot,t-\frac{\tau}{2})\Big\|_{L^p(\Om)}\nn\\
     & &+ c_1(\tau) \int_{t-\frac{\tau}{2}}^t e^{-(t-s)}(t-s)^{-\frac{1}{2}-\frac{1}{p}}
      \|\ueps(\cdot,s)\|_{L^p(\Om)}ds\\
      &\le& c_2(\tau)
      \qquad\mbox{for all }\eps \in(0,\epsr)\mbox{ and }t>\tau,
    \eas
as claimed.
   \qed

%
%
%
%
%
%
%
We can now draw on a Moser-type iteration to conclude the global existence of our approximate solution and the uniform boundedness of $\ueps$ away from the initial time.
\begin{lem}\label{lem7}
For each $\eps\in(0,\epsr)$, we have $\tme=\infty$. Moreover, for each $\tau>0$, 
there exists $C=C(\tau)>0$ such that 
 \be{7.1}
 \|\ueps(\cdot,t)\|_{L^\infty(\Om)}
 \le C(\tau)
 \qquad \mbox{for all $\ t>\tau\ $ and }\ \eps\in(0,\epsr).
 \ee
\end{lem}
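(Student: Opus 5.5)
The argument is a Moser--Alikakos iteration on $\io\ueps^p$, posed on time intervals that start after $\tau/2$. Fix $\tau>0$. On $(\frac{\tau}{2},\tme)$ we have $\veps\ge c_2(\tau)>0$ by Lemma~\ref{lem2} and (\ref{6.5}). Lemma~\ref{lem17} gives $\|\na\veps\|_{L^\infty(\Om)}\le c(\tau)$ there. Lemma~\ref{lem6} gives $\|\ueps\|_{L^{p_0}(\Om)}\le c(\tau)$ there for any fixed $p_0$. Since $\|\veps\|_{W^{1,\infty}(\Om)}$ is bounded, $\veps$ is bounded above as well.

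Testing the first equation of (\ref{0eps}) by $\ueps^{p-1}$, as in (\ref{3.2}), and using Young's inequality gives
\[
\frac{d}{dt}\io\ueps^p+\frac{p(p-1)}{2}\io\ueps^{p-1}\veps|\na\ueps|^2\le\frac{p(p-1)}{2}\io\ueps^{p+1}\veps|\na\veps|^2 .
\]
Replacing $\veps$ by its two-sided bounds, and $|\na\veps|$ by its upper bound, we obtain
\[
\frac{d}{dt}\io\ueps^p+\frac{c\,p(p-1)}{(p+1)^2}\io|\na\ueps^{\frac{p+1}{2}}|^2\le C p^2\io\ueps^{p+1}
\]
with constants $c,C$ depending on $\tau$ but not on $p$ or $\eps$. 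This is the exact setting of a porous-medium Moser iteration, with diffusion exponent $m=2$ and a drift term $\ueps^{p+1}$ of the same homogeneity as the diffusion. To handle the right-hand side, write $w=\ueps^{\frac{p+1}{2}}$. By the Gagliardo--Nirenberg inequality,
\[
\io w^2\le \eta\io|\na w|^2+C(\eta)\Big(\io w^{\frac{p}{p+1}}\Big)^{2\frac{p+1}{p}}.
\]
The last quantity is a power of $\|\ueps\|_{L^{p/2}}$. Choosing $\eta\sim p^{-2}$ in $\eta\io|\na w|^2$ absorbs that term, at the price of constants that grow polynomially in $p$. We also add $\io\ueps^p$ to both sides to produce a damping term.

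Set $p_k=2^k p_0$ and $M_k=\sup_{t\in(t_k,\tme)}\io\ueps^{p_k}$, where $t_k\nearrow\tau$ with $t_0=\frac{\tau}{2}$. The previous step yields a recursion of the form $M_k\le \max\{C b^k M_{k-1}^2,\,C b^k\}$. To see this, use an ODE comparison of the type in \cite[Lemma 2.5]{taowin_3mas} on the interval $(t_k,\tme)$, possibly with a cutoff in time to avoid needing initial control at $t_k$. Alternatively, compare $\frac{d}{dt}y+y\le C b^k M_{k-1}^2$ starting from the bound at $t_k$. Then the standard iteration lemma (as in Alikakos, or \cite{taowin_jde2014}) gives $\limsup_k M_k^{1/p_k}\le C(\tau)$, which is (\ref{7.1}) for $t>\tau$.

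Once (\ref{7.1}) holds, global existence follows quickly. For fixed $\eps$, suppose $\tme<\infty$ and take $\tau<\tme$. The bound (\ref{7.1}) on $(\tau,\tme)$ contradicts (\ref{ext}), hence $\tme=\infty$. Note that the upper bound must be uniform up to $\tme$, and (\ref{7.1}) provides exactly that.

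The main obstacle is controlling the dependence on $p$ of every constant in the iteration. That dependence must be at most geometric, $b^k$, for the iteration to close. The drift term is the delicate point: it scales like $p^2\io\ueps^{p+1}$, while the dissipation only carries the factor $p(p-1)/(p+1)^2\sim 1$. I would first try a direct Young splitting
\[
p^2\ueps^{p+1}\veps|\na\veps|^2\le \tfrac14 p(p-1)\ueps^{p-1}\veps|\na\ueps|^2+\ldots
\]
This does not work, because the drift has no gradient of $\ueps$ to pair with. So I would rely instead on the Gagliardo--Nirenberg interpolation above with $\eta\sim p^{-2}$. In two dimensions this produces a $C(\eta)\sim p^{c}$ factor, which is acceptable for the iteration.
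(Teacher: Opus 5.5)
Your argument works in outline, but it is organized differently from the paper's proof, and one step, the time localization, has to be pinned down.

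\textbf{Comparison of routes.} The paper never iterates on $\io\ueps^p$ itself. It multiplies $\ueps$ by a single time cutoff $\zeta_\eps$ vanishing on $[0,\frac{\tau_\eps}{2}]$ and writes the equation for $\zeps=\zeta_\eps\ueps$ as $z_{\eps t}=\na\cdot(\aeps\na\zeps)-\na\cdot\beps+\zeta_\eps'\ueps$ with zero initial datum. It then uses $\aeps=\ueps\veps\ge c\,\zeps$ (Lemma~\ref{lem2} and $\zeps\le\ueps$), so the diffusion is of porous-medium type in $\zeps$. The flux $\beps$ and the source $\zeta_\eps'\ueps$ are treated as given data bounded in $L^p$ via Lemmas~\ref{lem6} and \ref{lem17}, and a standard Moser lemma is quoted. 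You instead keep the taxis contribution inside the iteration as $Cp^2\io\ueps^{p+1}$, of the same homogeneity as the dissipation, and absorb it by Gagliardo--Nirenberg with $\eta\sim p^{-2}$. Your version is self-contained and makes the $p$-dependence explicit; the paper outsources exactly this bookkeeping, including the time localization, to the cited lemma. Your deduction of $\tme=\infty$ by taking $\tau<\tme$ and contradicting (\ref{ext}) is fine. It is equivalent to the paper's choice $\tau_\eps=\min\{1,\frac12\tme\}$, since none of your constants depends on $\tme$.

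\textbf{The time-localization step.} One of the two options you offer fails. Comparing $y'+y\le Cb^kM_{k-1}^2$ from $t_k$ needs $\io\ueps^{p_k}(\cdot,t_k)$. The only control of that quantity is Lemma~\ref{lem6}, whose constant grows in $p$ in an uncontrolled way. So you must keep the dissipation until the localization is done. Two ways to do this:
\begin{itemize}
\item \emph{Squared cutoff.} Take $p=p_k$, $w=\ueps^{\frac{p+1}{2}}$, $y=\io\ueps^p$, and $\zeta_k=\tilde\zeta_k^2$, where $\tilde\zeta_k$ rises from $0$ at $t_{k-1}$ to $1$ at $t_k$. Then $\zeta_k'\le 2\|\tilde\zeta_k'\|_\infty\zeta_k^{1/2}$. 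The two-dimensional inequality $\io w^{\frac{2p}{p+1}}\le C\|\na w\|_{L^2(\Om)}^{\frac{p}{p+1}}\|w\|_{L^{\frac{p}{p+1}}(\Om)}^{\frac{p}{p+1}}+C\|w\|_{L^{\frac{p}{p+1}}(\Om)}^{\frac{2p}{p+1}}$ absorbs $\zeta_k'y$ into $\zeta_k\io|\na w|^2$. The cost is a factor $\max\{\|\tilde\zeta_k'\|_\infty,1\}^2\sim 4^k\tau^{-2}$, which is geometric. By contrast, interpolating $y$ by H\"older between $\io\ueps^{p/2}$ and $\io\ueps^{p+1}$ and then using Young would cost a power $\|\tilde\zeta_k'\|_\infty^{(p_k+2)/2}$, which breaks the recursion.
\item \emph{Superlinear damping.} The same inequality gives $\io|\na w|^2\ge c\,(y/M_{k-1})^{2+2/p_k}$ whenever $y\ge CM_{k-1}^2$. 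The initial-data-independent comparison already used in Lemma~\ref{lem6} then yields a factor of order $2^k/\tau$.
\end{itemize}
Either way you obtain $M_k\le Cb^k\max\{M_{k-1},1\}^{2+2/p_k}$. The exponent is $2+2/p_k$ rather than $2$, which is harmless because $\sum_k p_k^{-1}<\infty$.
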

\proof
For $\eps\in(0,\epsr)$ we let $\tau_\eps:=\min\{1,\tfrac{1}{2}\tme\}$ and choose $\zeta_{\eps}\in C^\infty([0,\infty))$ such that
     $\zeta_{\eps}=0$ on $[0,\frac{\tau_\eps}{2}]$ and $\zeta_{\eps}=1$ on $[\tau_\eps,\infty)$. Set
    \bas
    \zeps(x,t)=\zeta_{\eps}(t)\ueps(x,t)
    \qquad \mbox{for all }x\in\bom\mbox{ and } t\in(0,\tme),
    \eas
    so that
    \be{7.4}
    \zeps\le \ueps
    \qquad \mbox{ for all $x\in\bom$ and  $t\in(0,\tme)$.}
    \ee 
    From (\ref{0eps}) we obtain
    \be{7.3}
    \left\{	
     \begin{array}{ll}
     z_{\eps t}= \na\cdot(\ueps\veps\na\zeps)
     -\na\cdot(\ueps\zeps\veps\na\veps)+\zeta_{\eps}'\ueps,  
      \qquad & x\in\Om, \ t>\frac{\tau_\eps}{2}, \\[1mm]
     \frac{\pa\zeps}{\pa\nu}=0,
      	\qquad & x\in\pO, \ t>\frac{\tau_\eps}{2}, \\[1mm]
      	\zeps\big(x,\frac{\tau_\eps}{2}\big)=0
      	\qquad & x\in\Om.
      \end{array}
    \right.
    \ee
   Using (\ref{7.4}) and Lemma \ref{lem2}, we infer the existence of $c_1(\tau_\eps)>0$ such that
   \bas
   \ueps\veps
   \ge c_1(\tau_\eps)\zeps
     \qquad \mbox{for all $x\in\bom$ and $t\in(0,\tme)$.}
   \eas
   In addition, by (\ref{1.2}) in Lemma \ref{lem1} and Lemmas \ref{lem6} and \ref{lem17}, for any fixed $p>2$, we may find
    $c_2(p,\tau_\eps)>0$ with the property that
    \bas
    \sup_{t\in\big(\frac{\tau_\eps}{2},\tme\big)}\Big\{
    \|\zeps(\cdot,t)\|_{L^p(\Om)}
    + \|\aeps(\cdot,t)\|_{L^p(\Om)}
    +\|\beps(\cdot,t)\|_{L^p(\Om)}+\|\zeta_{\eps}'\ueps\|_{L^p(\Om)}\Big\}
    \le c_2(p,\tau_\eps)
    \eas
    where $\aeps(x,t):=\ueps(x,t)\veps(x,t)$ 
    and $\beps:=\ueps(x,t)\zeps(x,t)\veps(x,t)\na\veps(x,t)$ 
    for all $x\in\bom$, $t\in(0,\tme)$ and  $\eps\in(0,\epsr)$. 
    Consequently, standard Moser-type iteration (e.g. \cite{taowin_subcrit}) provides $c_3(\tau_\eps)>0$
    such that
   \bas
   \|\zeps(\cdot,t)\|_{L^\infty(\Om)}
   \le c_3(\tau_\eps)
   \qquad \mbox{for all $t\in\big(\frac{\tau_\eps}{2},\tme\big)$}
   \eas
   and therefore,
   \bas
     \|\ueps(\cdot,t)\|_{L^\infty(\Om)}
     \le c_3(\tau_\eps)
     \qquad \mbox{for all $t\in(\tau_\eps,\tme)$,}
     \eas
  which together with (\ref{ext}) shows $\tme=\infty$. Since $\eps\in(0,\epsr)$ was arbitrary, we have $\tme=\infty$ for all $\eps\in(0,\epsr)$ and a posteriori conclude \eqref{7.1} by repeating the steps for arbitrary $\tau>0$ instead of $\tau_\eps$ as now with $\tme=\infty$ the condition $\tau\in(0,\tme)$ is always satisfied for all $\eps\in(0,\epsr)$.
\qed
%
%
%
%
%
Utilizing well-known interior Hölder-estimates we can further refine the known a priori information for both components.
  
\begin{lem}\label{lem8}
Let $\tau\in(0,1)$ and $T>1$. Then there exist $\theta=\theta(\tau,T)\in(0,1)$ 
and $C(\tau,T)>0$ such that
\bas
\|\ueps\|_{C^{\theta,\frac{\theta}{2}}(\bom\times[\tau,T])}
+ \|\veps\|_{C^{2+\theta,1+\frac{\theta}{2}}(\bom\times[\tau,T])}
\le C(\tau,T)
\qquad\mbox{for all } \eps\in(0,\epsr).
\eas
\end{lem}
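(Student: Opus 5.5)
The plan is to apply standard parabolic Hölder theory to each component on $\bom\times[\tau,T]$. Away from $t=0$ the earlier lemmas make the first equation uniformly parabolic with bounded coefficients, and the second equation is then a linear heat equation with a Hölder source.

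\emph{Step 1: uniform parabolicity and bounded data for $\ueps$.} Fix $\tau\in(0,1)$, $T>1$, and work on $[\tfrac{\tau}{2},T]$.
\begin{itemize}
\item By Lemma~\ref{lem2} (applied with $h=\ueps$, $\inf_s\io\ueps=\meps\ge m$) we get $\veps\ge c_1(\tau)>0$.
\item Lemma~\ref{lem7} gives $\ueps\le c_2(\tau)$.
\item Lemma~\ref{lem17} gives $|\na\veps|\le c_3(\tau)$.
\end{itemize}
We write the first equation of (\ref{0eps}) in divergence form with respect to $w=\ueps^2$:
\[
\big(\ueps\big)_t=\na\cdot\Big(\tfrac12\veps\na\ueps^2-\ueps^2\veps\na\veps\Big).
\]
This is a porous-medium-type degenerate equation, since $\ueps$ itself may be small. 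The flux $A(x,t,\ueps,\na\ueps)=\ueps\veps\na\ueps-\ueps^2\veps\na\veps$ then satisfies the structure conditions for degenerate equations of porous medium type, with $\veps$ bounded above and below and $\ueps^2\veps\na\veps$ uniformly bounded.

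\emph{Step 2: Hölder regularity of $\ueps$.} For degenerate parabolic equations of this type I will invoke the Hölder continuity results of Porzio--Vespri (or DiBenedetto's theory for porous medium type equations $u_t=\na\cdot a(x,t,u,\na u)$ with $a\cdot\na u\ge c\,u|\na u|^2 - \psi$ and $|a|\le C u|\na u|+\psi$). This yields $\theta_1\in(0,1)$ and a bound for $\ueps$ in $C^{\theta_1,\theta_1/2}(\bom\times[\tau,T])$ that is uniform in $\eps$, because it depends only on the structure constants and the $L^\infty$ bound.

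I expect this to be the main obstacle. The degeneracy at $\ueps=0$ rules out the classical Porzio--Vespri result for nondegenerate equations and the plain Ladyzhenskaya--Ural'tseva estimates. One must verify the porous-medium structure with exponent $m=2$, including the lower-order drift term, and handle the Neumann boundary up to $\bom$. If the boundary version is awkward, the alternative is to work with $w=\ueps^2$, which solves $w_t=\na\cdot(\veps\sqrt{w}\na w)-\ldots$, but this seems no easier; I would rely on the cited degenerate Hölder theory directly.

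\emph{Step 3: Schauder estimates for $\veps$.} With $\ueps$ bounded in $C^{\theta_1,\theta_1/2}(\bom\times[\tfrac{\tau}{2},T])$, the second equation $\vepst=\Del\veps-\veps+\ueps$ is a linear heat equation with Hölder source and homogeneous Neumann data. Parabolic Schauder theory, localized in time by a cutoff $\zeta$ vanishing near $t=\tfrac{\tau}{2}$ and applied to $\zeta\veps$, whose initial value is zero and whose source $\zeta\ueps+\zeta'\veps$ is bounded in $C^{\theta_1,\theta_1/2}$ by Lemma~\ref{lem17}, yields a uniform bound for $\veps$ in $C^{2+\theta_1,1+\theta_1/2}(\bom\times[\tau,T])$. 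Taking $\theta=\theta_1$ completes the proof.
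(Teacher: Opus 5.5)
Your plan is the paper's own route. The paper applies the Porzio--Vespri H\"older estimates to the first equation, with uniformity in $\eps$ coming from the lower bound for $\veps$ (Lemma~\ref{lem2}) and the bounds of Lemmas~\ref{lem17} and \ref{lem7}, and then applies parabolic Schauder estimates to the second equation. Porzio--Vespri is itself a result for degenerate, doubly nonlinear equations, so it directly covers the porous-medium structure you checked, and your worry about a ``nondegenerate'' version is unnecessary.

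The one soft spot is Step~3: Lemma~\ref{lem17} gives no H\"older control in time, so it does not bound $\zeta'\veps$ in $C^{\theta_1,\theta_1/2}$. Either insert a maximal-regularity step first, or, as the paper does, use interior-in-time Schauder estimates, which need only an $L^\infty$ bound on $\veps$ over a slightly larger time interval.
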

\proof
   Lemma~\ref{lem7} provides the uniform bounds needed to invoke the
   standard Hölder estimates (\cite{porzio_vespri}) for the first equation in
    (\ref{0eps}). Hence there exist 
   $\theta_1=\theta_1(\tau,T)\in(0,1)$ and $c_1(\tau, T)>0$ such that 
   $\|\ueps\|_{C^{\theta,\frac{\theta}{2}}(\bom\times[\frac{\tau}{2},2T])}\le c_1(\tau, T)$
   for all $\eps\in(0,\epsr)$. This bound in hand,  classical interior parabolic
    Schauder estimates (\cite{LSU}) applied to the second equation of 
    (\ref{0eps}) yield the
     existence of $\theta_2=\theta_2(\tau,T)\in(0,1)$ and $c_2(\tau, T)>0$ such that
     $\|\veps\|_{C^{2+\theta_2,1+\frac{\theta_2}{2}}(\bom\times[\tau,T])}\le c_2(\tau,T)$
   for all $\eps\in(0,\epsr)$. 
   \qed
In order to pass some of the integrability information on to the limit functions we will establish in the next section, we additionally verify the uniform integrability of both $(|\nabla\veps|^2)_{\eps\in(0,\epsr)}$ and $(\ueps\veps)_{\eps\in(0,\epsr)}$.
\begin{cor}\label{cor9}
Let $T>0$. Then the families
\bas
(|\na\veps|^2)_{\eps\in(0,\epsr)}\ 
\mbox{ and }\ 
(\ueps\veps)_{\eps\in(0,\epsr)}
\qquad\mbox{are uniformly integrable over } \Om\times(0,T).
\eas
\end{cor}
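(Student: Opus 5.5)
Both families will be shown to be uniformly integrable by the de la Vallée-Poussin criterion. It suffices to exhibit, for each family, a superlinear convex function $\Phi$ with
\[
\sup_{\eps\in(0,\epsr)}\int_0^T\io \Phi(f_\eps)<\infty .
\]
The bounds will come from the time-global estimates of Lemma~\ref{lem1}, together with the higher-order information of Lemma~\ref{lem6} and Lemma~\ref{lem17}, which is only available on $(\tau,T)$. The key difficulty is the initial layer near $t=0$, where no positive lower bound for $\veps$ is available. The plan is to treat it separately: the initial layer contributes little, and away from it we have strong bounds.

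\textbf{The family $(|\na\veps|^2)$.} Fix $\eta>0$ and a measurable set $E\subset\Om\times(0,T)$. Split $E$ into $E\cap(\Om\times(0,\tau))$ and $E\cap(\Om\times(\tau,T))$. By (\ref{1.2}),
\[
\int_{E\cap(\Om\times(0,\tau))}|\na\veps|^2\le C(m)\,\tau ,
\]
which is below $\eta/2$ once $\tau$ is small. On the remaining part, Lemma~\ref{lem17} gives $|\na\veps|^2\le C(\tau)^2$, so this piece is at most $C(\tau)^2|E|$. That is below $\eta/2$ if $|E|$ is small, for the already fixed $\tau$. Together with the $L^1$ bound from (\ref{1.2}), this gives uniform integrability. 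Alternatively, one could try to use (\ref{6.2}) with $p=1$ in $L^4$ away from zero in the same way.

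\textbf{The family $(\ueps\veps)$.} The same splitting is used. On $(\tau,T)$, Lemma~\ref{lem7} and Lemma~\ref{lem17} bound $\ueps\veps$ pointwise by $C(\tau)$, so that part is harmless. For the initial layer, (\ref{1.3}) gives
\[
\int_0^\tau\io\ueps\veps\le C(m)\,\tau ,
\]
which again is small uniformly in $\eps$. Hence for any $\eta>0$ we first choose $\tau$, and then choose $\delta$ with $C(\tau)\delta<\eta/2$. This yields
\[
\int_E\ueps\veps<\eta \quad\mbox{whenever } |E|<\delta ,
\]
uniformly in $\eps$, together with a uniform $L^1(\Om\times(0,T))$ bound.

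I expect no real obstacle in either part. The only subtle point is that all constants away from zero depend on $\tau$, so the order of choosing the parameters matters: $\tau$ must be fixed first from the time-global bounds, and only then $\delta$.
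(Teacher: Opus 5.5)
Your argument is correct and is essentially the paper's proof. You first choose $\tau$ from the time-global bounds (\ref{1.2}) and (\ref{1.3}) so that the layer $(0,\tau)$ contributes at most $\eta/2$, and then choose $\delta$ from the $L^\infty$ bounds of Lemmas~\ref{lem7} and \ref{lem17} on $(\tau,T)$; the paper writes this out for $|\na\veps|^2$ and handles $\ueps\veps$ ``likewise''. One small correction to your write-up: the de la Vall\'ee-Poussin criterion announced in your opening paragraph is never used. What you actually carry out is the direct $\eta$--$\delta$ equi-integrability argument, which, together with the uniform $L^1$ bound, is all that is needed.
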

\proof
   By Lemma \ref{lem1}, we can find $c_1>0$ such that
   \be{8.1}
   \io |\na\veps(\cdot,t)|^2
   \le c_1
   \qquad\mbox{for all $t>0$ and } \eps\in(0,\epsr).
   \ee
   Given $\eta>0$, we choose $\tau=\tau(\eta)>0$ sufficiently small such that
   \be{8.2}
   c_1\cdot\tau
   <\frac{\eta}{2}.
   \ee
   Applying Lemma \ref{lem17} yields the existence of $c_2=c_2(\eta)>0$ fulfilling
   \be{8.3}
   \|\na\veps(\cdot,t)\|_{L^\infty(\Om)}
   \le c_2
   \qquad\mbox{for all $t>\tau$ and } \eps\in(0,\epsr)
   \ee
  and thereafter we choose $\del=\del(\eta)>0$ small enough to satisfy 
  \be{8.4}
  \delta\cdot c_2^2
  \le \frac{\eta}{2}.
  \ee 
	Now, given an arbitrary measurable set $E\subset\Om\times(0,T)$ with $|E|<\delta$, we denote for any $t\in(0,T)$ the time-slice at time $t$ by
 \bas
 E(t):=\{x\in\Om\ |\ (x,t)\in E\}.
 \eas
 Then, using (\ref{8.1}) and (\ref{8.3}), we infer
 \bas
\iint_E |\na\veps|^2
&=& \int_0^\tau\int_{E(t)}|\na\veps|^2
+ \int_\tau^T\int_{E(t)}|\na\veps|^2\\
&\le& c_1\cdot\tau
+ c_2^2\int_0^T |E(t)|\\
&\le&  c_1\cdot\tau
+c_2^2\cdot\del
\qquad\mbox{for all $\eps\in(0,\epsr)$,}
 \eas
 which together with (\ref{8.2}) and (\ref{8.4}) entails
\bas
\iint_E |\na\veps|^2
\le \eta
\qquad\mbox{for all $\eps\in(0,\epsr)$.}
\eas 
In light of the arbitrariness of $E$, we therefore prove that
 $(|\na\veps|^2)_{\eps\in(0,1)}$ is uniformly integrable over $\Om\times(0,T)$.
 
 \medskip
 
Likewise, relying on Lemmas \ref{lem1} and \ref{lem7} once again, we can establish
the uniform integrability of $(\ueps\veps)_{\eps\in(0,\epsr)}$ over $\Om\times(0,T)$.
\qed
\mysection{Construction of limit functions}\label{s5}
We are now in position to obtain limit functions suitable for our course by employing the Arzelà--Ascoli theorem.
\begin{lem}\label{lem11}
There exist a sequence $(\eps_j)_{j\in\N}\subset(0,\epsr)$ and nonnegative
 functions 
 \be{11.1}
 \left\{
 \begin{array}{l}
 u\in L^1_{loc}(\bom\times[0,\infty))
 \cap C^0_{loc}(\bom\times(0,\infty))
 \qquad\mbox{and}\\
 v\in L^2_{loc}([0,\infty);W^{1,2}(\Omega))
 \cap C^{2,1}_{loc}(\bom\times(0,\infty))
 \end{array}\right.
 \ee
 such that,  as $\eps=\eps_j\searrow 0$, 
  \begin{eqnarray}
    & &\ueps \to u
    \qquad\mbox{in } L^1_{loc}(\bom\times[0,\infty)),
    \label{11.2}\\
    & &
    \ueps \to u
        \qquad\mbox{in } 
      C^0_{loc}(\bom\times (0,\infty)),
      \label{11.7}\\
 	& & \veps\to v
 	\qquad \mbox{in } C^{2,1}_{loc}(\bom\times (0,\infty)),
 	\label{11.3} \\
 	& & \veps\to v
 	 \qquad \mbox{in } L^2_{loc}(\bom\times[0,\infty)),
 	 \label{11.8} \\  
 	& & \na\veps\to \na v
 	\qquad \mbox{in } L^2_{loc}(\bom\times [0,\infty)),
 	\label{11.4}\\
 	& &v_{\eps t} \wto v_t
 	\qquad \mbox{in } L^2(\bom\times (0,\infty))
 	\label{11.5}
 	\qquad \qquad \mbox{and}\\
 	& & \ueps\veps \to uv
 	 \qquad\mbox{in } L^1_{loc}(\bom\times[0,\infty)).
 	    \label{11.6}
   \end{eqnarray}
   Moreover, for any $p\in(1,\infty)$, one has
  \be{11.9}
  \lim_{t \searrow 0}\|v(\cdot,t)-v_0\|_{L^p(\Om)}=0.
  \ee
\end{lem}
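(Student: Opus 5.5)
The plan is to extract the sequence in two layers: first on time intervals bounded away from $t=0$, where Lemmas~\ref{lem7}, \ref{lem17} and \ref{lem8} give strong compactness, and then to extend to $[0,\infty)$ using only the time-global bounds of Lemma~\ref{LF}, Lemma~\ref{lem1} and Corollary~\ref{cor9}. The case $t\searrow 0$ is where the argument is most delicate, since no regularity is available there.

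\emph{Step 1: convergence away from $t=0$.} For $k\in\N$, Lemma~\ref{lem8} with $\tau=\frac1k$ and $T=k$ gives $\eps$-independent bounds for $\ueps$ in $C^{\theta,\frac\theta2}(\bom\times[\frac1k,k])$ and for $\veps$ in $C^{2+\theta,1+\frac\theta2}(\bom\times[\frac1k,k])$. By the Arzel\`a--Ascoli theorem and a diagonal extraction we obtain $\eps_j\searrow0$ and limits $u\ge0$ and $v$ on $\bom\times(0,\infty)$ such that \eqref{11.7} and \eqref{11.3} hold. In particular $\ueps\to u$, $\veps\to v$, $\na\veps\to\na v$ and $\ueps\veps\to uv$ hold a.e. in $\Om\times(0,\infty)$. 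Passing to a further subsequence, \eqref{1.4} yields $v_{\eps t}\wto w$ in $L^2(\Om\times(0,\infty))$. The locally uniform convergence of $\vepst$ then identifies $w=v_t$, which is \eqref{11.5}.

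\emph{Step 2: extension to neighborhoods of $t=0$.} For \eqref{11.2}, fix $T>0$ and $\tau\in(0,T)$. By \eqref{mass},
\bas
\int_0^T\!\!\io|u_{\eps_j}-u_{\eps_i}|\le 2(m+|\Om|)\tau+\int_\tau^T\!\!\io|u_{\eps_j}-u_{\eps_i}|.
\eas
By Step 1 the last term tends to zero, so $(u_{\eps_j})$ is Cauchy in $L^1(\Om\times(0,T))$ and its limit coincides with $u$. The same splitting, combined with \eqref{1.3}, gives \eqref{11.6}; alternatively one can use the a.e.\ convergence together with the uniform integrability from Corollary~\ref{cor9} and apply Vitali's theorem. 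For \eqref{11.4}, the a.e.\ convergence of $\na\veps$ and the uniform integrability of $(|\na\veps|^2)$ from Corollary~\ref{cor9} give $\na\veps\to\na v$ in $L^2(\Om\times(0,T))$ by Vitali's theorem. By \eqref{1.2}, $(\veps)$ is bounded in $L^\infty((0,\infty);W^{1,2}(\Om))$, so $|\veps|^2$ is uniformly integrable as well (from $W^{1,2}\hookrightarrow L^4$), and Vitali again gives \eqref{11.8}. Fatou's lemma and weak lower semicontinuity then yield $v\in L^\infty((0,\infty);W^{1,2}(\Om))$, hence \eqref{11.1}.

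\emph{Step 3: initial trace \eqref{11.9}.} From $\veps(\cdot,t)-v_{0\eps}=\int_0^t\vepst$ and \eqref{1.4} we get
\bas
\|\veps(\cdot,t)-v_{0\eps}\|_{L^2(\Om)}\le\sqrt{C(m)\,t}\qquad\mbox{for all } t>0.
\eas
Since $v_{0\eps}\to v_0$ uniformly and $\veps(\cdot,t)\to v(\cdot,t)$ pointwise for each $t>0$, Fatou's lemma gives $\|v(\cdot,t)-v_0\|_{L^2(\Om)}\le\sqrt{C(m)t}$. Now let $p>2$; for $p\le2$ the claim follows directly from the $L^2$ estimate. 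Pick $q>p$ and use the interpolation inequality
\bas
\|v(\cdot,t)-v_0\|_{L^p(\Om)}\le \|v(\cdot,t)-v_0\|_{L^2(\Om)}^{\vartheta}\,\|v(\cdot,t)-v_0\|_{L^q(\Om)}^{1-\vartheta}.
\eas
The $L^q$ factor is bounded uniformly in $t$: by the $W^{1,2}$ bound on $v$ from Step 2, the two-dimensional embedding $W^{1,2}(\Om)\hookrightarrow L^q(\Om)$, and $v_0\in L^\infty(\Om)$. Since the $L^2$ factor tends to zero, \eqref{11.9} follows.

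The main obstacle is the behavior near $t=0$, where neither Lemma~\ref{lem7} nor Lemma~\ref{lem8} applies. The plan handles it entirely by integral arguments that do not need regularity there: the small-time-slab estimate via mass conservation for $u$, Vitali's theorem with the uniform integrability of Corollary~\ref{cor9} for $\na v$ and $uv$, and the time-derivative bound \eqref{1.4} for the initial trace of $v$.
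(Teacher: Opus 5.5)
Your proposal is correct and follows essentially the same route as the paper: Arzel\`a--Ascoli via Lemma~\ref{lem8} for positive times, then uniform-integrability/Vitali arguments (Corollary~\ref{cor9}, \eqref{1.2}) to reach $t=0$, weak compactness from \eqref{1.4}, and the $\sqrt{t}$-bound on $\|\veps(\cdot,t)-v_{0\eps}\|_{L^2(\Om)}$ interpolated against the $W^{1,2}$-bound for \eqref{11.9}. The only notable deviation is that for \eqref{11.2} you control a thin time slab near $t=0$ through mass conservation instead of the $L\log L$-based uniform integrability from \eqref{1.1}; this is slightly more elementary and has the same effect.
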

\proof
   Lemma \ref{lem8} and the Arzel\`a--Ascoli theorem first give
   (\ref{11.7}) and (\ref{11.3}). Since (\ref{11.7}) provides pointwise convergence 
   of $\ueps$ on $\Om\times(0,\infty)$, and since $(\ueps)_{\eps\in(0,1)}$
   is uniformly integrable on $\Om\times(0,T)$ for all $T>0$ by (\ref{1.1}) in Lemma \ref{lem1},
   the Vitali convergence theorem applies and gives (\ref{11.2}). Next, 
  (\ref{1.2}) implies the uniform integrability of
   $(\veps^2)_{\eps\in(0,\epsr)}$ in $\Om\times(0,T)$ for every $T>0$,
   which, together with (\ref{11.3}) and the Vitali convergence theorem, yields (\ref{11.8}).
   Similarly, combining Corollary \ref{cor9} with (\ref{11.3}) and invoking the
   Vitali convergence theorem once more, we obtain (\ref{11.4}).
   Likewise, (\ref{1.3}) together with (\ref{11.7}), (\ref{11.3}), and Corollary \ref{cor9}
   yields (\ref{11.6}). Consequently, (\ref{1.4}) in conjunction with (\ref{11.3})
    furnishes (\ref{11.5}).
   
   \medskip
   
   Finally, employing the H\"older inequality and (\ref{1.4}) once more, we find
   $c_1>0$ such that
\begin{align*}
  \|\veps(\cdot,t)-v_{0\eps}\|_{L^2(\Om)}
  &\le \int_0^t\|v_{\eps t}(\cdot,s)\|_{L^2(\Om)}ds\\
  &\le c_1\cdot t^\frac{1}{2}
  \qquad\mbox{for all $t>0$ and }\eps\in(0,\epsr),
\end{align*}
which along with the Gagliardo--Nirenberg inequality, (\ref{init}), and
(\ref{1.2}), implies that for each fixed $p\in(2,\infty)$, there exist constants
 $c_2, c_3>0$ such that
\begin{align*}
 \|\veps(\cdot,t)
 -v_{0\eps}\|_{L^p(\Om)}
 &\le c_2\|\veps(\cdot,t)- v_{0\eps}\|^{\frac{p-2}{p}}_{W^{1,2}(\Om)}
 \cdot \|\veps(\cdot,t)
  -v_{0\eps}\|_{L^2(\Om)}^\frac{2}{p}\\
  &\le c_3\|\veps(\cdot,t)
    -v_{0\eps}\|_{L^2(\Om)}^\frac{2}{p}\\
  &\le c_1^\frac{2}{p}c_3\cdot t^\frac{1}{p}
    \qquad\mbox{for all $t>0$ and }\eps\in(0,\epsr).
\end{align*}
Combining this with (\ref{11.3}) and the convergence $v_{0\eps}\to v_0$
in $L^p(\Omega)$ as $\eps=\eps_j\searrow 0$, which follows from \eqref{init}, proves
\bas
\lim_{t \searrow 0}\|v(\cdot,t)-v_0\|_{L^p(\Om)}=0.
\eas
 The convergences (\ref{11.2})-(\ref{11.6}) now entail
 (\ref{11.1}). 
    \qed

With the bounds prepared in the earlier sections, we can replicate the approach of \cite[Lemma 3.1]{li_win_prsea2026} to augment the convergence properties of Lemma~\ref{lem11} by an additional strong convergence property for the gradient of the first solution component.
\begin{lem}\label{lem10}
Let $u$ and $(\eps_j)_{j\in\N}$ be as provided by Lemma \ref{lem11}. Then, for all $\tau>0$ and any $T>\tau$
\be{10.1}
 \na\ueps\to\na u
  \qquad\mbox{in $L^2(\Om\times (\tau,T))$ and a.e. in }
  \Om\times(0,\infty)
\ee
and
  \be{10.6}
  \sqrt{\veps}\na\ueps-\ueps\sqrt{\veps}\na\veps\wto
   \sqrt{v}\na u-u\sqrt{v}\na v
   \qquad\mbox{in }L^2(\Om\times(0,T))
  \ee
as $\eps=\eps_j\searrow 0$.
\end{lem}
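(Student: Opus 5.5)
The idea is to derive an energy identity for $\io \ueps^2$ on $(\tau,T)$ that survives the limit. We then compare it with the analogous inequality for $u$, obtained by testing the limit equation. Together these upgrade weak convergence of $\na\ueps$ to strong convergence.

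First we collect weak compactness. On $\Om\times(\tau/2,T+1)$, Lemma~\ref{lem7}, Lemma~\ref{lem17} and Lemma~\ref{lem2} give $\ueps\le C$, $|\na\veps|\le C$ and $\veps\ge c>0$. Hence (\ref{1.6}) yields
\[
\int_{\tau/2}^{T}\io \veps|\na\ueps|^2\le 2\int_{\tau/2}^T\io\Big|\sqrt{\veps}\na\ueps-\ueps\sqrt{\veps}\na\veps\Big|^2+2\int_{\tau/2}^T\io \ueps^2\veps|\na\veps|^2\le C,
\]
so $\na\ueps$ is bounded in $L^2(\Om\times(\tau/2,T))$. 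Along a further subsequence this gives $\na\ueps\wto\na u$ there, with the limit identified through (\ref{11.7}). By (\ref{11.3}) we also have $\veps\to v$ and $\na\veps\to\na v$ uniformly on $\bom\times[\tau/2,T]$.

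Next we compare energies. Testing (\ref{0eps}) by $\ueps$ gives
\[
\tfrac12\io\ueps^2(\cdot,t_2)-\tfrac12\io\ueps^2(\cdot,t_1)+\int_{t_1}^{t_2}\io\ueps\veps|\na\ueps|^2=\int_{t_1}^{t_2}\io\ueps^2\veps\na\veps\cdot\na\ueps.
\]
Since $\ueps^2\veps\na\veps$ converges uniformly and $\na\ueps$ converges weakly, the right-hand side converges to $\int\!\!\int u^2v\na v\cdot\na u$. The boundary terms converge by (\ref{11.7}).

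For the limit we need the matching equality
\[
\tfrac12\io u^2\Big|_{t_1}^{t_2}+\int_{t_1}^{t_2}\io uv|\na u|^2=\int_{t_1}^{t_2}\io u^2v\na v\cdot\na u .
\]
This is the main obstacle, because $u$ may vanish and the equation for $u$ is degenerate. I would first pass to the limit in the weak formulation with test functions $\vp\in C_0^\infty(\bom\times(\tau/2,T))$, which gives $u_t\in L^2((\tau',T);(W^{1,2})^*)$ since the flux $uv\na u-u^2v\na v$ is in $L^2$. Then I would use $u$ itself as a test function, which is legitimate by a Steklov-averaging argument because $u\in L^2W^{1,2}\cap L^\infty$. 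If this route gives trouble, the alternative is to follow \cite[Lemma 3.1]{li_win_prsea2026} directly and use $\ln$-type or truncated test functions.

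Finally we conclude strong convergence. Subtracting the two identities shows $\int\!\!\int \ueps\veps|\na\ueps|^2\to\int\!\!\int uv|\na u|^2$. Combined with weak convergence and the uniform convergence of $\ueps\veps\to uv$, this gives $\sqrt{\ueps\veps}\na\ueps\to\sqrt{uv}\na u$ strongly in $L^2$. This controls $\na\ueps$ only where $u>0$.

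On $\{u=0\}$ we have $\na u=0$ a.e. To handle $\na\ueps$ there, I would add the analogous identity from testing by $1$ (mass conservation), or better use the identity obtained by testing with $\ueps$ for the modified quantity $\frac12\ueps^2+\ueps$. A cleaner route is to test by $\ln$-free $\ueps$ but compare $\int\!\!\int \veps|\na\ueps|^2$ using $\ueps\ge\eps$. Failing that, one can test with $\ueps^{0}$-type quantities as in (\ref{lf1}), whose dissipation $\int\!\!\int \veps|\na\ueps-\ueps\na\veps|^2$ has no degenerate weight. Its lower semicontinuity with respect to the limit identity, again derived by testing the limit equation with $\ln u$ replaced suitably, gives norm convergence of $\sqrt{\veps}\na\ueps$. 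From this and $\veps\ge c$ we obtain (\ref{10.1}) in $L^2(\Om\times(\tau,T))$, and a diagonal subsequence gives a.e. convergence on $\Om\times(0,\infty)$.

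For (\ref{10.6}), (\ref{1.6}) gives boundedness in $L^2(\Om\times(0,T))$, so a weak limit exists. On each $(\tau,T)$ that limit equals $\sqrt v\na u-u\sqrt v\na v$ by (\ref{10.1}) and (\ref{11.3}). Since $\tau$ is arbitrary, the identification holds on all of $(0,T)$.
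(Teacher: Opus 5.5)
\paragraph{Verdict.} Your proposal has a genuine gap: it never proves strong convergence of $\nabla u_\varepsilon$ near $\{u=0\}$, which is the actual content of \eqref{10.1}.

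\paragraph{Where the argument stops working.} Your first two steps are sound. The limit identity you call the main obstacle is in fact routine: once the flux $uv\nabla u-u^2v\nabla v$ is in $L^2$, you have $u_t\in L^2((W^{1,2}(\Omega))^\star)$, and the Lions--Magenes chain rule applies. But these steps only give $\sqrt{u_\varepsilon v_\varepsilon}\,\nabla u_\varepsilon\to\sqrt{uv}\,\nabla u$ in $L^2$. The difficulty of the lemma lies near $\{u=0\}$, where this weighted information says nothing. Uniform convergence of $u_\varepsilon$, weak convergence of $\nabla u_\varepsilon$ and convergence of $\iint u_\varepsilon v_\varepsilon|\nabla u_\varepsilon|^2$ are all compatible with something like $u_\varepsilon=\varepsilon(1+\sin(x_1/\varepsilon))$ on a region where $u\equiv 0$.

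At exactly this point your proposal becomes a list of options, none of which is carried out:
\begin{itemize}
\item Testing with $1$ or with $u_\varepsilon+1$ gives nothing new. Since $\nabla 1=0$, you just recover the same degenerate dissipation $\iint u_\varepsilon v_\varepsilon|\nabla u_\varepsilon|^2$.
\item A lower bound $u_\varepsilon\ge\varepsilon$ is not available. Even if it were, an $\varepsilon$-dependent bound could not give estimates uniform in $\varepsilon$.
\item The logarithmic idea is the right one, but you state it backwards. Lower semicontinuity, $\iint v|\nabla u|^2\le\liminf\iint v_\varepsilon|\nabla u_\varepsilon|^2$, is automatic and proves nothing. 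What you need is the \emph{reverse} inequality, i.e.\ the entropy identity for the limit. That is precisely where the degeneracy enters, because $\ln u\notin W^{1,2}$ where $u$ vanishes. Writing ``with $\ln u$ replaced suitably'' leaves this step unproved.
\end{itemize}

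\paragraph{How to close the gap.} Work on $(t_1,t_2)\subset(0,\infty)$.
\begin{enumerate}
\item Test the limit equation with $\ln(u+\delta)$. This is admissible because it is Lipschitz in $u$; use the chain rule for $\Psi_\delta(s)=(s+\delta)\ln(s+\delta)-s$. This gives
$\int_\Omega\Psi_\delta(u)\big|_{t_1}^{t_2}+\iint\frac{u}{u+\delta}v|\nabla u|^2=\iint\frac{u^2}{u+\delta}v\nabla v\cdot\nabla u$.
\item Let $\delta\searrow0$. Use monotone convergence in the dissipation term, together with $\nabla u=0$ a.e.\ on $\{u=0\}$, and dominated convergence in the other terms.
\item Compare with the classical identity
$\int_\Omega u_\varepsilon\ln u_\varepsilon\big|_{t_1}^{t_2}+\iint v_\varepsilon|\nabla u_\varepsilon|^2=\iint u_\varepsilon v_\varepsilon\nabla v_\varepsilon\cdot\nabla u_\varepsilon$.
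All terms except the dissipation converge by \eqref{11.7}, \eqref{11.3} and weak convergence.
\item This gives $\|\sqrt{v_\varepsilon}\nabla u_\varepsilon\|_{L^2}\to\|\sqrt{v}\nabla u\|_{L^2}$. Together with weak convergence this yields strong convergence, and then $v_\varepsilon\ge c$ gives \eqref{10.1}. Your $u_\varepsilon$-testing step becomes unnecessary.
\end{enumerate}

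\paragraph{Comparison with the paper.} Completed this way, your energy-comparison route would be a genuinely more elementary alternative. The paper instead proves three estimates:
\begin{itemize}
\item weighted gradient bounds (Claim~1);
\item second-order bounds on $u_\varepsilon^{\kappa+1}|D^2u_\varepsilon|^2$ and $D^2u_\varepsilon^{(\kappa+3)/2}$ (Claim~2);
\item dual-space bounds on $\partial_t u_\varepsilon^\kappa$ (Claim~3).
\end{itemize}
It then imports the Aubin--Lions-type compactness argument of Li--Winkler. Your treatment of \eqref{10.6} is fine.
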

\proof
 The argument establishing (\ref{10.1}) follows the method of \cite{li_win_prsea2026}, adapted to the present setting
 in which the bounds for $\veps, \na\veps$, and $\ueps$ are only available 
 away from the initial time and for $\eps\in(0,\epsr)$.

\medskip
\textbf{Claim 1.}
For every $\kappa>0$, $\tau>0$, and $T>\tau$ there exists $c_1(\kappa,\tau,T)>0$ with
  the property that
  \be{10.2}
  \int_{\frac{\tau}{2}}^{T}\io \ueps^{\kappa-1}|\na\ueps|^2
  \le c_1(\kappa,\tau,T)
  \qquad\mbox{for all }\eps\in(0,\epsr).
  \ee
  Indeed, this can be seen by arguing as in \cite[Lemma 3.1]{li_win_prsea2026}.
  Fixing $p\in(0,1)$ such that $p\leq \kappa$, we test the first equation in (2.1) by
  $u_\eps^{p-1}$ to obtain, after an integration by parts and Young's
   inequality,
  \bas
  \frac1p\frac{d}{dt}\int_\Omega u_\eps^p
  \ge  \frac{1-p}{2}\int_\Omega u_\eps^{p-1}v_\eps|\nabla u_\eps|^2
  - \frac{1-p}{2}\int_\Omega u_\eps^{p+1}v_\eps|\nabla v_\eps|^2.
  \eas
By Lemmas \ref{lem2}, \ref{lem17} and \ref{lem7}, we can find positive constants
$c_2(p,\tau,T)$ and $c_3(p,\tau,T)$ such that
\bas
\frac1p\frac{d}{dt}\int_\Omega u_\eps^p
\ge c_2(p,\tau,T)\int_\Omega u_\eps^{p-1}|\nabla u_\eps|^2
-c_3(p,\tau,T)
\qquad\mbox{for all $t\in(\frac{\tau}{2},T)$ } and \eps\in(0,\epsr).
\eas
Upon integration in time and using Lemma \ref{lem7}, this provides $c_4(p,\tau,T)>0$ such that
\bas
\int_{\frac{\tau}{2}}^T\int_\Omega
u_\eps^{p-1}|\nabla u_\eps|^2 
\le c_4(p,\tau,T)
\qquad\mbox{for all }\eps\in(0,\epsr).
\eas    
  Recalling our choice of $p$ we have $p-\kappa\geq0$ and rewriting
  \bas
   \int_{\frac{\tau}{2}}^{T}\io \ueps^{\kappa-1}|\na\ueps|^2
   =\int_{\frac{\tau}{2}}^{T}\io \ueps^{\kappa-p}
   \ueps^{p-1}|\na\ueps|^2\quad\text{for all }\eps\in(0,\epsr),
  \eas
hence entails \eqref{10.2} in view of Lemma~\ref{lem7}.
 
\medskip
\textbf{Claim 2.} 
For any $\kappa>0$ and $T>\tau$, one may choose $ c_5(\kappa,\tau,T)>0$
so that
\be{10.3}
\int_{\tau}^{T}\io \ueps^{\kappa+1}|D^2\ueps|^2
+ \int_{\tau}^{T}\io \Big|D^2\ueps^{\frac{\kappa+3}{2}}\Big|^2
\le  c_5(\kappa,\tau,T).
\ee

Arguing as in Lemma 3.4 of \cite{li_win_prsea2026}, the
 first equation in (\ref{0eps}) can be rewritten in the form
  \bas
	u_{\eps t} = \ueps\veps\Del\ueps + \veps |\na\ueps|^2
	+ a_\eps(x,t) \ueps,
	\qquad (x,t)\in\Om\times (0,\infty),
  \eas
 with
  \bas
	a_\eps(x,t):=\na\ueps\cdot\na\veps
	- \ueps\veps \Del\veps - \ueps |\na\veps|^2
	- 2\veps\na\ueps\cdot\na\veps,
	\qquad (x,t)\in\Om\times (0,\infty).
  \eas
Using (\ref{10.2}) together with Lemmas \ref{lem17}, \ref{lem7}, 
and \ref{lem8}, we infer as in \cite{li_win_prsea2026} that there exists $c_6(\tau,T)>0$ such that
\be{10.4}
\int_{\frac{\tau}{2}}^T\io \aeps^2
\le c_6(\tau,T)
\qquad\mbox{for all }\eps\in(0,\epsr).
\ee

Furthermore, for each $\tau>0$, Lemmas \ref{lem2},
 \ref{lem17}, and \ref{lem7} in turn furnish positive constants 
 $c_7(\tau)$ and $c_8(\tau)$ satisfying for all $T>\tau$,
\bas
\ueps\le  c_7(\tau),
\quad\ 
c_8(\tau)\le \veps
\le c_7(\tau),
\quad\ \mbox{and}\quad\ 
|\na\veps|\le c_7(\tau)
\quad\ \mbox{on }\Om\times(\tau,T)\ \mbox{for all }\eps\in(0,\epsr).
\eas

In parallel with (3.15)–(3.27) in the proof of Lemma~3.4 of \cite{li_win_prsea2026},
now performed on $t\in(\frac{\tau}{2},T)$ and $\eps\in(0,\epsr)$,
we introduce
  \bas
	\yeps(t):=\io \ueps^\gamma(\cdot,t) |\na\ueps(\cdot,t)|^2,
  \eas
 and choose $c_9(\gamma, \tau,T)>0$ so that
   \bas
 	\heps(t):=c_9(\gamma, \tau,T)\io \ueps^{\gamma+1}(\cdot,t) |D^2 \ueps(\cdot,t)|^2
 	+ c_9(\gamma, \tau,T) \io \ueps^{\gamma-1}(\cdot,t) |\na\ueps(\cdot,t)|^4
   \eas
   for $t>\frac{\tau}{2}$ and $\eps\in (0,\epsr)$.
 Then there exist constants $c_{10}(\kappa, \tau,T)>0$ and $c_{11}(\kappa, \tau,T)>0$
 such that 
  \bas
 	\yeps'(t) + c_{10}(\kappa, \tau,T) \yeps^2(t) + \frac{1}{4} \heps(t)
 	\le c_{11}(\kappa, \tau,T) \io a_\eps^2 + c_{11}(\kappa, \tau,T)
   \eas
   for all $t\in (\frac{\tau}{2},T)$ and $\eps\in (0,\epsr)$. 
   \medskip
   
By (\ref{10.4}) and the ODI comparison lemma
\cite[Lemma 3.3]{li_win_prsea2026}, we obtain $c_{12}(\kappa,\tau,T)>0$ such that
\bas
\int_{\tau}^{T}\io \heps(t)dt
\le c_{12}(\kappa,\tau,T)
\qquad\mbox{for all }\eps\in(0,\epsr).
\eas
Whence, (\ref{10.3}) follows from \cite[Corollary 3.5]{li_win_prsea2026}.

\medskip
\textbf{Claim 3.} 
Let $q>2$ and $\kappa>0$. For each $T>\tau>0$ there
exists $c_{13}(\kappa,\tau,T)>0$ with the property that
\be{10.5}
\int_{\frac{\tau}{2}}^T\|\pa_t\ueps^\kappa(\cdot,t)\|_{(W^{1,q}(\Om))^\star}dt
\le c_{13}(\kappa,\tau,T)
\qquad\mbox{for all }\eps\in(0,\epsr).
\ee

The argument proceeds exactly as in \cite[Lemma~3.6]{li_win_prsea2026}, based on
Step~1 and Lemmas~\ref{lem17} and \ref{lem7}.  

\medskip
Upon combining Claims~1–3 and repeating the arguments of
\cite[Lemmas~3.7 and 3.8]{li_win_prsea2026}, with the sole modification that
$\eps\in(0,\epsr)$, we obtain the strong convergence stated in
\eqref{10.1}. Since $\tau>0$ and $T>\tau$ were arbitrary, this convergence additionally
implies almost everywhere convergence of $\na\ueps$
in $\Om\times(0,\infty)$.

\medskip
In view of (\ref{11.7}), (\ref{11.3}), and (\ref{10.1}), combining (\ref{1.6}) with Egorov's theorem yields (\ref{10.6}).
\qed
\mysection{Solution property of the limit functions. Proof of Theorem \ref{theo11}}\label{s6}
In the solution concept below we will make use of two auxiliary functions. For $\lam>1$ and $s\ge0$ we first let
\be{12.1}
f_\lambda(s):=\frac{1}{(s+1)^\lam}, 
\ee
so that
\be{12.15}
f_\lambda'(s)=-\frac{\lam}{(1+s)^{\lam+1}}<0
\qquad\mbox{and}\qquad
f_\lambda''(s)=\frac{\lam(\lam+1)}{(1+s)^{\lam+2}}>0\qquad\text{for }s\geq0,
\ee
and then additionally introduce for $\lambda>1$ and $s\geq0$ the function
\be{12.16}
\Phi_\lambda(s):=\int_0^s \sigma^2f''(\sigma)d\sigma.
\ee
Evidently, by choice of $\lambda>1$ we have $f_\lambda, \Phi_\lambda\in C^\infty([0,\infty))\cap L^\infty((0,\infty))$. 
Moreover, these functions satisfy 
\begin{align*}
0\leq\Phi_\lambda(s)\leq \frac{\lambda(\lambda+1)}{\lambda-1}
\qquad\text{and }\qquad
|s^2f_\lambda'(s)|\leq \lambda,\qquad\text{for }s\geq0,
\end{align*}
which will ensure that the corresponding terms in Definition~\ref{dw} are well defined. Let us also briefly recall that the functional $\mathcal{F}$ as stated in \eqref{df} is given by
\begin{align*}
\mathcal{F}(\phi,\psi)
:=\io \phi\ln\phi -\io \phi\psi + \frac{1}{2} \io |\na \psi|^2
+\frac{1}{2} \io \psi^2,
\end{align*}
for $\phi\in L^1(\Om)$ and $\psi\in W^{1,2}(\Om)$ such that
$\phi>0$ a.e. in $\Om$.

\begin{defi}\label{dw}
  Let $\Om\subset\R^2$ be a bounded domain with smooth boundary. 
  Assume (\ref{init}), and let $\lam>1$ and
  \be{w1}
	\left\{ 
	\begin{array}{l}
	u\in L_{loc}^\infty([0,\infty);L^1(\Om))
	 \cap L^2_{loc}((0,\infty); W^{1,2}(\Om))
	\qquad \mbox{and} \\[1mm]
	v\in L_{loc}^2([0,\infty);W^{1,2}(\Om))
	\ear
  \ee
  be such that $u\ge 0$ and $v\ge 0$ a.e. in $\Om\times(0,\infty)$, that
  with $f_\lambda$ and $\Phi_\lambda$ as in \eqref{12.1} and \eqref{12.16}, respectively, we have
  \be{w2}
  uf_\lambda'(u)v\na u\in L_{loc}^1(\bom\times[0,\infty);\R^2),
  \qquad
  \qquad
  uf_\lambda''(u)v|\na u|^2\in L_{loc}^1(\bom\times[0,\infty)),
  \ee
  as well as
  \be{w22}
  uv\in L_{loc}^1(\bom\times[0,\infty)),
   \quad
   \sqrt{v}\na u-u\sqrt{v}\na v\in L^2_{loc}(\bom\times[0,\infty);\R^2),
   \quad\mbox{and}\quad
  v_t\in L_{loc}^2(\bom\times[0,\infty)).
  \ee 
  Let $\mathcal{F}$ be as defined in (\ref{df}). Then $(u,v)$ will be called a {\em global generalized energy solution} of (\ref{0}) if there exists a null set $N_\star\subset(0,\infty)$ such that
 \be{w4}
   \mathcal{F}(u(\cdot,t),v(\cdot,t))
   +\int_{0}^{t}\io v_{t}^2
    + \int_{0}^{t}\io \Big|\sqrt{v}\na u-u\sqrt{v}\na v\Big|^2
   \le \mathcal{F}(u_0,v_0)
 \qquad\mbox{for all }t\in(0,\infty)\setminus N_\star,
   \ee
and
\be{w6}
\io u(\cdot,t)
=\io u_0
\qquad\mbox{for all }t\in(0,\infty)\setminus N_\star,
\ee
if for all $\vp\in C^\infty_0(\bom\times[0,\infty))$ one has
\be{w5}
-\int_0^\infty\io v\vp_t
-\io v_0\vp(\cdot,0)
=
-\int_0^\infty\io \nabla v\cdot\nabla\vp
+\int_0^\infty\io (u-v)\vp,
\ee  
if for any $p>1$ $\lim_{t \searrow 0}\|v(\cdot,t)-v_0\|_{L^p(\Om)}=0,$ 
and if for all nonnegative $\vp\in C_0^\infty(\bom\times [0,\infty))$, we have
  \begin{align}\label{w7}
  &\ -\!\int_0^\infty\io f_\lambda(u)\vp_t-\!\io f_\lambda(u_0)\vp(\cdot,0)\nn\\
     \le&\ -\! \int_0^\infty\!\io u f_\lambda''(u)v|\na u|^2\vp
     -\! \int_0^\infty\!\io \Phi_\lambda(u)v v_t\vp
     +  \int_0^\infty\!\io \Phi_\lambda(u)uv\vp
      -\!  \int_0^\infty\!\io \Phi_\lambda(u)v^2\vp\nn\\
     &\ -\! \int_0^\infty\!\io \Phi_\lambda(u)|\na v|^2\vp
     -\! \int_0^\infty\!\io u f_\lambda'(u)v\na u\cdot\na\vp 
       +\int_0^\infty\!\io \Big\{u^2f_\lambda'(u)v
          -\! \Phi_\lambda(u)v\Big\} \na v\cdot\na\vp.
  \end{align}
\end{defi}
{\bf Remark.} \quad
By adapting the argument in \cite[Lemma 2.1]{win_sima2015}, one can verify
that the above notion of generalized energy solution is consistent with
classical solvability. More precisely, if a generalized energy solution
$(u,v)$ satisfies
$(u,v)\in (C^0(\bom\times[0,\infty))\times C^{2,1}(\bom\times(0,\infty)))^2$,
then $(u,v)$ is a classical solution of (\ref{0}).\abs
Evidently, many of the conditions above, in particular those on the regularity of a
 global generalized energy solution, can be easily inferred for our limit functions
  from Lemma~\ref{lem11}. We shall now focus on verifying the
  remaining requirements in the definition. To this end, we first derive an
  identity for $\int_\Om \partial_t f_\lambda(\ueps)\varphi$.
\begin{lem}\label{lem12}
Let $\lam>1$, $T\in(0,\infty]$, and let $f_\lambda,\Phi_\lambda$ be provided by \eqref{12.1} and \eqref{12.16}, respectively. Then,
if $\vp\in C^1(\bom\times[0,T))$, then 
\bea{12.2}
 \io \pa_t f_\lambda(\ueps)\vp
   &=&-\io \ueps f_\lambda''(\ueps)\veps|\na\ueps|^2\vp
   -\io \Phi_\lambda(\ueps)\veps v_{\eps t}\vp
   + \io \Phi_\lambda(\ueps)\ueps\veps\vp
   - \io \Phi_\lambda(\ueps)\veps^2\vp\nn\\
   & & -\io \Phi_\lambda(\ueps)|\na\veps|^2\vp
   +\io \Big\{\ueps^2f_\lambda'(\ueps)\veps
   - \Phi_\lambda(\ueps)\veps\Big\}
   \na\veps\cdot\na\vp\nn\\
   & &-\io \ueps f_\lambda'(\ueps)\veps\na\ueps\cdot\na\vp
   \qquad \mbox{for all $t\in(0,T)$ and }\eps\in(0,1).
\eea
\end{lem}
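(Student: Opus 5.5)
We compute directly from the first equation in (\ref{0eps}), using that $\ueps,\veps$ are classical and positive on $\bom\times(0,\tme)=\bom\times(0,\infty)$ for $\eps\in(0,\epsr)$ by Lemma~\ref{lem7}. For general $\eps\in(0,1)$ we work on $(0,\min\{T,\tme\})$; the identity is pointwise in $t$. We have $\pa_t f_\lambda(\ueps)=f_\lambda'(\ueps)u_{\eps t}$. We insert $u_{\eps t}=\na\cdot(\ueps\veps\na\ueps-\ueps^2\veps\na\veps)$, multiply by $\vp$, integrate over $\Om$, and integrate by parts. The Neumann conditions $\pa_\nu\ueps=\pa_\nu\veps=0$ remove the boundary term. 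This yields
\bas
\io \pa_t f_\lambda(\ueps)\vp
=-\io f_\lambda''(\ueps)\vp\,\na\ueps\cdot(\ueps\veps\na\ueps-\ueps^2\veps\na\veps)
-\io f_\lambda'(\ueps)\na\vp\cdot(\ueps\veps\na\ueps-\ueps^2\veps\na\veps).
\eas
The first piece already gives $-\io \ueps f_\lambda''(\ueps)\veps|\na\ueps|^2\vp$. The $\na\vp$-piece gives $-\io\ueps f_\lambda'(\ueps)\veps\na\ueps\cdot\na\vp+\io\ueps^2f_\lambda'(\ueps)\veps\na\veps\cdot\na\vp$. The remaining cross term is $\io \ueps^2 f_\lambda''(\ueps)\veps\vp\,\na\ueps\cdot\na\veps$.

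The key step is to rewrite this cross term using $\Phi_\lambda$. Since $\Phi_\lambda'(s)=s^2f_\lambda''(s)$ by (\ref{12.16}), we have $\ueps^2f_\lambda''(\ueps)\na\ueps=\na\Phi_\lambda(\ueps)$. Integrating by parts once more, again using $\pa_\nu\veps=0$ on $\pO$, gives
\bas
\io \veps\vp\,\na\Phi_\lambda(\ueps)\cdot\na\veps
=-\io\Phi_\lambda(\ueps)\vp|\na\veps|^2-\io\Phi_\lambda(\ueps)\veps\vp\Del\veps-\io\Phi_\lambda(\ueps)\veps\na\veps\cdot\na\vp.
\eas
We then substitute $\Del\veps=v_{\eps t}-\ueps+\veps$ from the second equation of (\ref{0eps}). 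This produces exactly the terms $-\io\Phi_\lambda(\ueps)\veps v_{\eps t}\vp+\io\Phi_\lambda(\ueps)\ueps\veps\vp-\io\Phi_\lambda(\ueps)\veps^2\vp$. Collecting the $\na\veps\cdot\na\vp$ contributions into $\io\{\ueps^2f_\lambda'(\ueps)\veps-\Phi_\lambda(\ueps)\veps\}\na\veps\cdot\na\vp$ then yields (\ref{12.2}).

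The main obstacle is bookkeeping rather than analysis: tracking signs in the two integrations by parts, and recognizing that $\Phi_\lambda'=s^2f_\lambda''$ is precisely what turns the cross term into a divergence. Regularity is unproblematic. At each fixed $t>0$ both components are $C^{2}$ in $x$ and $\ueps>0$, and $f_\lambda,\Phi_\lambda$ are smooth on $[0,\infty)$, so all integrals are finite and every integration by parts is justified.
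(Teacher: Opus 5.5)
Your proposal is correct and follows the paper's proof essentially step by step. Both arguments integrate by parts once using the zero-flux boundary condition, rewrite the cross term via $\nabla\Phi_\lambda(\ueps)=\ueps^2f_\lambda''(\ueps)\nabla\ueps$, integrate by parts a second time, and substitute $\Delta\veps=v_{\eps t}-\ueps+\veps$. Your restriction to $t<\min\{T,\tme\}$ for $\eps\notin(0,\epsr)$ is a point the paper leaves implicit.
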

\proof
  Using (\ref{0eps}) and integrating by parts gives
   \bea{12.3}
   \io \pa_t f_\lambda(\ueps)\vp
   &=&-\io \{f_\lambda''(\ueps)\vp\na\ueps
   +f_\lambda'(\ueps)\na\vp\}
   \cdot \{\ueps\veps\na\ueps-\ueps^2\veps\na\veps\}\nn\\
   &=& -\io \ueps f_\lambda''(\ueps)\veps|\na\ueps|^2\vp
   + \io \ueps^2f_\lambda''(\ueps)\veps(\na\ueps\cdot\na\veps)\vp\nn\\
   & &-\io \ueps f_\lambda'(\ueps)\veps\na\ueps\cdot\na\vp
   +\io \ueps^2f_\lambda'(\ueps)\veps\na\veps\cdot\na \vp
   \eea
for all $t\in(0,T)$ and $\eps\in(0,1)$. From (\ref{12.1}), we also note that
$\na\Phi_\lambda(\ueps)=\ueps^2f_\lambda''(\ueps)\na\ueps$. Therefore,
using the second equation of (\ref{0eps}), we compute
\begin{align*}
&\io \ueps^2f_\lambda''(\ueps)\veps(\na\ueps\cdot\na\veps)\vp\\
=\ & -\io \Phi_\lambda(\ueps)\na\cdot(\veps\vp\na \veps)\\
=\ & -\io \Phi_\lambda(\ueps)\{\veps\vp\Del\veps+|\na\veps|^2\vp
+\veps\na\veps\cdot\na\vp\}\\
=\ & \io \Phi_\lambda(\ueps)\veps\vp\cdot\{-v_{\eps t}+\ueps-\veps\}
-\io \Phi_\lambda(\ueps)|\na\veps|^2\vp- \io \Phi_\lambda(\ueps)\veps\na\veps\cdot\na\vp\\
=\ &  -\io \Phi_\lambda(\ueps)\veps v_{\eps t}\vp
+ \io \Phi_\lambda(\ueps)\ueps\veps\vp
- \io \Phi_\lambda(\ueps)\veps^2\vp-\io \Phi_\lambda(\ueps)|\na\veps|^2\vp
- \io \Phi_\lambda(\ueps)\veps\na\veps\cdot\na\vp
\end{align*}
for all $t\in(0,T)$ and $\eps\in(0,1)$. Substituting this into (\ref{12.3}) immediately yields
(\ref{12.2}).
\qed
Apparently, Lemmas~\ref{lem11} and \ref{lem10} do not allow us to directly pass to the limit in some of the integrals appearing above, which is why we derive the following final pair of estimates.
\begin{lem}\label{lem13}
Assume $r\in[1,\tfrac43)$ and $\lam>1$, and let $f_\lambda$ be as in \eqref{12.1}. Then for any $T>0$, there exists $C(T,\lambda,r)>0$ such that 
\be{13.0}
\int_0^T\io\ueps f_\lambda''(\ueps)\veps|\na\ueps|^2
+\int_0^T\io|\ueps f_\lambda'(\ueps)\veps\na\ueps|^r
\le C(T)
\qquad\mbox{for all }\eps\in(0,\epsr).
\ee
\end{lem}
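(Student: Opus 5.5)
The plan is to get both bounds in \eqref{13.0} from the identity of Lemma~\ref{lem12} and the time-global estimates of Lemma~\ref{lem1} only. The estimates of Section~\ref{s4} degenerate as $t\searrow 0$, so they cannot be used here.

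\textbf{Step 1: the first integral.} Apply Lemma~\ref{lem12} with $\vp\equiv1$; all terms containing $\na\vp$ then vanish. Integrating over $(0,T)$ gives
\bas
\int_0^T\io \ueps f_\lambda''(\ueps)\veps|\na\ueps|^2
&=& \io f_\lambda(u_{0\eps})-\io f_\lambda(\ueps(\cdot,T))
-\int_0^T\io \Phi_\lambda(\ueps)\veps\vepst\\
& & +\int_0^T\io\Phi_\lambda(\ueps)\ueps\veps
-\int_0^T\io\Phi_\lambda(\ueps)\big(\veps^2+|\na\veps|^2\big).
\eas
The last integral is nonnegative and is dropped, and $-\io f_\lambda(\ueps(\cdot,T))\le0$. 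Since $0<f_\lambda\le1$, the first term is at most $|\Om|$. Write $\Phi_\infty:=\frac{\lambda(\lambda+1)}{\lambda-1}$, so that $0\le\Phi_\lambda\le\Phi_\infty$. By Cauchy--Schwarz together with \eqref{1.2} and \eqref{1.4},
\bas
\int_0^T\io \Phi_\lambda(\ueps)|\veps\vepst| \le \Phi_\infty\Big(\int_0^T\io\veps^2\Big)^{1/2}\Big(\int_0^T\io\vepst^2\Big)^{1/2}\le C\sqrt T.
\eas
Also $\int_0^T\io\Phi_\lambda(\ueps)\ueps\veps\le\Phi_\infty C(m)T$ by \eqref{1.3}. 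Together these bound the first integral in \eqref{13.0} by a constant $c_1(T,\lambda)$, uniformly in $\eps\in(0,\epsr)$.

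\textbf{Step 2: the second integral.} Factor the integrand as
\bas
|\ueps f_\lambda'(\ueps)\veps\na\ueps|^2=\big(\ueps f_\lambda''(\ueps)\veps|\na\ueps|^2\big)\cdot\frac{\ueps f_\lambda'(\ueps)^2}{f_\lambda''(\ueps)}\,\veps .
\eas
By \eqref{12.15}, $\frac{sf_\lambda'(s)^2}{f_\lambda''(s)}=\frac{\lambda}{\lambda+1}\cdot\frac{s}{(1+s)^\lambda}\le1$ for $s\ge0$, because $\lambda>1$. Hence
\bas
|\ueps f_\lambda'(\ueps)\veps\na\ueps|^r\le\big(\ueps f_\lambda''(\ueps)\veps|\na\ueps|^2\big)^{r/2}\veps^{r/2}.
\eas
Apply Hölder's inequality on $\Om\times(0,T)$ with exponents $\frac2r$ and $\frac2{2-r}$. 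This bounds the second integral by $c_1^{r/2}\big(\int_0^T\io\veps^{\frac{r}{2-r}}\big)^{\frac{2-r}{2}}$.

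\textbf{Step 3: closing the bound.} Since $r<\frac43$, the exponent satisfies $\frac r{2-r}<2$. Then \eqref{1.2} and the embedding $W^{1,2}(\Om)\hookrightarrow L^2(\Om)$ bound $\int_0^T\io\veps^{\frac r{2-r}}$ by $C(T)$; any finite exponent would in fact work, by the two-dimensional Sobolev embedding. This gives \eqref{13.0}.

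I expect the main obstacle to be Step~1, namely controlling the cross term $\int\ueps^2f_\lambda''\veps\na\ueps\cdot\na\veps$ without any bound on $\ueps|\na\veps|^2$ near $t=0$. This is exactly why I route it through the $\Phi_\lambda$-rewriting of Lemma~\ref{lem12} rather than through Young's inequality, and I should double-check the sign of the $|\na\veps|^2$ and $\veps^2$ terms there.
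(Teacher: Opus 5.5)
Your proposal is correct and follows essentially the paper's argument. You apply Lemma~\ref{lem12} with $\vp\equiv1$, integrate over $(0,T)$, and control the $\Phi_\lambda$-terms through Lemma~\ref{lem1}; you then split $|\ueps f_\lambda'(\ueps)\veps\na\ueps|^r$ against $\ueps f_\lambda''(\ueps)\veps|\na\ueps|^2$, using the boundedness of $sf_\lambda'(s)^2/f_\lambda''(s)$ and a space-time $L^{\frac{r}{2-r}}$ bound on $\veps$. Your space-time Cauchy--Schwarz treatment of $\int_0^T\io\Phi_\lambda(\ueps)\veps\vepst$ is slightly more careful than the paper's: the paper states a bound on $\io\vepst^2(\cdot,t)$ for each $t$, whereas Lemma~\ref{lem1} only controls its time integral (harmless once one integrates over $(0,T)$).
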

\proof
   We first recall from Lemma \ref{lem1} that there exists $c_1>0$
   such that
   \be{13.2}
   \io \ueps(\cdot,t)\veps(\cdot,t)
   +\io \veps^2(\cdot,t)
   +\io v_{\eps t}^2(\cdot,t)
   +\io |\na\veps(\cdot,t)|^2
   \le c_1
   \qquad\mbox{for all $t>0$ and } \eps\in(0,1).
   \ee
   Applying Lemma \ref{lem12} with $\vp=1$ then yields
   \bea{13.1}
    \frac{d}{dt}\io f_\lambda(\ueps)
      &=&-\io \ueps f_\lambda''(\ueps)\veps|\na\ueps|^2
      -\io \Phi_\lambda(\ueps)\veps v_{\eps t}
      + \io \Phi_\lambda(\ueps)\ueps\veps
      - \io \Phi_\lambda(\ueps)\veps^2\nn\\
      & & -\io \Phi_\lambda(\ueps)|\na\veps|^2
      \qquad \mbox{for all $t\in(0,T)$ and }\eps\in(0,\epsr).
   \eea
   
Since $\Phi_\lambda$ is uniformly bounded by some $c_2=c_2(\lambda)>0$, 
Young's inequality 
together with (\ref{13.2}) ensures
\bas
& &\hs{-15mm}
\bigg| -\io \Phi_\lambda(\ueps)\veps v_{\eps t}
      + \io \Phi_\lambda(\ueps)\ueps\veps
      - \io \Phi_\lambda(\ueps)\veps^2
      -\io \Phi_\lambda(\ueps)|\na\veps|^2\bigg|\\
& \le&c_2\cdot\bigg\{
\io \veps^2
+\io v_{\eps t}^2
+\io|\na\veps|^2
+\io \ueps\veps
\bigg\}\\
&\le& c_1c_2
\qquad\mbox{for all $t>0$ and }\eps\in(0,\epsr).
\eas
Plugging this back into \eqref{13.1} and integrating over $(0,T)$ we find that
\bea{13.6}
\int_0^T\io\ueps f_\lambda''(\ueps)\veps|\na\ueps|^2
&\le& c_1c_2\cdot T
+\io f_\lambda(u_{0\eps})
-\io f_\lambda(\ueps(\cdot,t))\nn\\
&\le& c_1c_2\cdot T
+|\Om|
\qquad\mbox{for all }\eps\in(0,\epsr),
\eea
due to $0<f_\lambda(s)\le1$ for all $s\ge0$.  Therefore, an application of Young's inequality entails
\bas
\io|\ueps f_\lambda'(\ueps)\veps\na\ueps|^r
\le \io f_\lambda''(\ueps)\ueps\veps|\na\ueps|^2
+ \io \ueps^\frac{r}{2-r}|f_\lambda'(\ueps)|^\frac{2r}{2-r}\veps^\frac{r}{2-r}
|f_\lambda''(\ueps)|^{-\frac{r}{2-r}}
\eas
for all $t\in(0,T)$ and $\eps\in(0,\epsr)$. Because of the precise forms of $f_\lambda'$ and $f_\lambda''$ stated in \eqref{12.15} and the fact that for $\lam>1$ and $r\in[1,2)$, 
\be{13.4}
\frac{r}{2-r}-(\lam+1)\cdot\frac{2r}{2-r}+(\lam+2)\cdot\frac{r}{2-r}
=-\frac{r(\lam-1)}{2-r}<0,
\ee
we may choose $c_3=c_3(\lam,r)>0$ such that
\begin{align*}
s^\frac{r}{2-r}|f_\lambda'(s)|^\frac{2r}{2-r}|f_\lambda''(s)|^{-\frac{r}{2-r}}
&=s^\frac{r}{2-r}\cdot(1+s)^{-\frac{2r(\lambda+1)}{2-r}}\cdot(1+s)^{\frac{r(\lambda+2)}{2-r}}\cdot \lambda^{\frac{r}{2-r}}\cdot(1+\lambda)^{-\frac{r}{2-r}}\\
&\le (1+s)^{-\frac{r(\lambda-1)}{2-r}}\lambda^\frac{r}{2-r}\le c_3
\qquad\mbox{for all $s\ge0$.}
\end{align*}
 Consequently,
\bas
\int_0^T\io|\ueps f_\lambda'(\ueps)\veps\na\ueps|^r
\le \int_0^T\io f_\lambda''(\ueps)\ueps\veps|\na\ueps|^2
+ c_3\cdot \int_0^T\io \veps^\frac{r}{2-r}
\qquad\mbox{for all $\eps\in(0,\epsr)$.}
\eas
Combining this with (\ref{13.2}), (\ref{13.6}) and an application of Hölder's inequality completes the proof of (\ref{13.0}).
\qed
Now everything is prepared to verify \eqref{w7} and the other conditions of Definition~\ref{dw}.%
\begin{lem}\label{lem14}
Let $(u,v)$ be as constructed in Lemma \ref{lem11}. Then $(u,v)$ is a global
generalized energy solution in the sense of Definition \ref{dw}.
\end{lem}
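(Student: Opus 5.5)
We verify the requirements of Definition~\ref{dw} one at a time, passing to the limit $\eps=\eps_j\searrow0$ in the approximate identities.

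\emph{Regularity and integrability.} The regularity in (\ref{w1}) and $u,v\ge0$ follow from Lemma~\ref{lem11}, (\ref{mass}), and (\ref{10.1}) on sets $\Om\times(\tau,T)$. The bound $uv\in L^1_{loc}$ is (\ref{11.6}), $v_t\in L^2$ is (\ref{11.5}), and $\sqrt v\na u-u\sqrt v\na v\in L^2_{loc}$ comes from (\ref{10.6}) by weak lower semicontinuity together with (\ref{1.6}). For (\ref{w2}) we use Lemma~\ref{lem13}, the a.e.\ convergence of $\ueps,\veps,\na\ueps$ (by (\ref{11.7}), (\ref{11.3}), (\ref{10.1})), and Fatou's lemma. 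This gives $uf_\lambda''(u)v|\na u|^2\in L^1(\Om\times(0,T))$, and the $L^r$ bound with $r>1$ yields weak $L^r$ compactness of $\ueps f_\lambda'(\ueps)\veps\na\ueps$; we identify the weak limit through the a.e.\ limit.

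\emph{Mass, $v$-equation and initial trace.} (\ref{w6}) follows from (\ref{mass}) and (\ref{11.2}). Since $\ueps\to u$ in $L^1(\Om\times(t_1,t_2))$, we get $\io u(\cdot,t)=m$ for a.e.\ $t$, and in fact for every $t>0$ by (\ref{11.7}) combined with uniform integrability. The weak formulation (\ref{w5}) is obtained by testing the second equation of (\ref{0eps}) and using (\ref{11.2}), (\ref{11.8}), (\ref{11.4}) and $v_{0\eps}\to v_0$. The $L^p$ initial trace is (\ref{11.9}).

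\emph{Energy inequality (\ref{w4}).} Start from (\ref{lf1}) and fix $t$ outside a null set $N_\star$, chosen so that $\ueps(\cdot,t)\to u(\cdot,t)$ and $\veps(\cdot,t)\to v(\cdot,t)$ a.e.\ and $\na\veps(\cdot,t)\wto\na v(\cdot,t)$ in $L^2$. For $t>0$ these properties in fact hold everywhere by (\ref{11.7}) and (\ref{11.3}). The dissipation integrals are handled by weak lower semicontinuity via (\ref{11.5}) and (\ref{10.6}); the $\frac12\io|\na\veps|^2$ and $\frac12\io\veps^2$ terms by lower semicontinuity; and $\io\ueps\ln\ueps\to\io u\ln u$ by uniform convergence for fixed $t>0$. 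The right side $\mathcal F(u_{0\eps},v_{0\eps})\to\mathcal F(u_0,v_0)$ by (\ref{init}).

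The main obstacle is the coupling term $-\io\ueps\veps$, which carries a minus sign, so lower semicontinuity gives the wrong direction. Fortunately, at each fixed $t>0$ Lemmas~\ref{lem7} and \ref{lem17} give uniform $L^\infty$ bounds, so (\ref{11.7}) and (\ref{11.3}) yield $\io\ueps\veps(\cdot,t)\to\io uv(\cdot,t)$. Thus only the positive quadratic terms need lower semicontinuity, and they in fact converge.

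\emph{Inequality (\ref{w7}).} Multiply (\ref{12.2}) by a nonnegative $\vp$ and integrate over $(0,\infty)$, transferring the time derivative to $\vp$. The terms $f_\lambda(\ueps)$ converge by (\ref{11.2}) and boundedness, and $f_\lambda(u_{0\eps})\to f_\lambda(u_0)$ uniformly. The $\Phi_\lambda$ terms converge because $\Phi_\lambda$ is bounded and continuous and $\ueps\to u$ a.e. Specifically: $\Phi_\lambda(\ueps)\veps v_{\eps t}$ converges using strong $L^2$ convergence (\ref{11.8}) of $\veps$ times bounded a.e.-convergent factors (dominated convergence) against weak (\ref{11.5}); $\Phi_\lambda(\ueps)\ueps\veps$ by (\ref{11.6}) and a generalized dominated convergence; $\Phi_\lambda(\ueps)\veps^2$ and $\Phi_\lambda(\ueps)|\na\veps|^2$ by (\ref{11.8}) and (\ref{11.4}). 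The flux term $\ueps f'_\lambda(\ueps)\veps\na\ueps\cdot\na\vp$ converges by the weak $L^r$ convergence above. The term $\ueps^2f_\lambda'(\ueps)\veps\na\veps$ is handled by boundedness of $s^2f_\lambda'(s)$, and $\Phi_\lambda\veps\na\veps$ by Vitali, since $\veps\na\veps$ is bounded in $L^1$ and converges, being strongly convergent in $L^1_{loc}$ via (\ref{11.8}) and (\ref{11.4}). The only remaining term is $-\iint\ueps f''_\lambda(\ueps)\veps|\na\ueps|^2\vp$, where we can only use Fatou's lemma (with the a.e.\ convergence from (\ref{10.1})). Fatou gives $\liminf\ge$ of the limit integrand, which is exactly why the identity becomes the inequality ``$\le$'' in (\ref{w7}).
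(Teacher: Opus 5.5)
Your proposal is correct and follows essentially the same route as the paper. You get (\ref{w2}) from Lemma~\ref{lem13} and Fatou, you get (\ref{w7}) by integrating (\ref{12.2}) against a nonnegative $\vp$ and using Fatou only on the $\ueps f_\lambda''(\ueps)\veps|\na\ueps|^2$ term, and you get (\ref{w4}) at every fixed $t>0$ from the locally uniform convergences (\ref{11.7}), (\ref{11.3}) plus lower semicontinuity of the dissipation integrals. The only differences are cosmetic: you identify the flux limit via weak $L^r$ compactness instead of Vitali, and you use weak lower semicontinuity instead of Fatou for the dissipation terms, both of which are equivalent here.
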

\proof
   By Lemmas \ref{lem1}, \ref{lem11}, and \ref{lem10}, the regularity requirements
   (\ref{w1}) and (\ref{w22}) as well as the mass conservation
   property (\ref{w6}) are satisfied.
   Moreover, for each $\lambda>1$ and $f_\lambda,\Phi_\lambda$ accordingly provided by \eqref{12.1} and \eqref{12.16}, combining Lemmas 
   \ref{lem11} and \ref{lem10} with
   Lemma \ref{lem13} and invoking Fatou's lemma yields (\ref{w2}).
  A further application of the Vitali convergence theorem then shows that,
  as $\eps=\eps_j\searrow 0$,
   \be{14.1}
   \ueps f_\lambda'(\ueps)\veps\na \ueps
   \to u f_\lambda'(u) v \na u
   \qquad\mbox{in } L^1_{loc}(\bom\times[0,\infty)).
   \ee
   
   \medskip
   
   To verify (\ref{w7}),  for any given nonnegative 
   $\vp\in C_0^\infty(\bom\times [0,\infty))$, applying Lemma \ref{lem12} with this
    test function and integrating in time and then performing
    an integration by parts, we obtain
   \bea{14.2}
    \int_0^\infty\io \ueps f_\lambda''(\ueps)\veps|\na\ueps|^2\vp 
    &=&\int_0^\infty\io f_\lambda(\ueps)\vp_t
    +\io f_\lambda(u_{0\eps})\vp(\cdot,0)
    -\int_0^\infty\io \Phi_\lambda(\ueps)\veps v_{\eps t}\vp\nn\\
    & &+ \int_0^\infty\io \Phi_\lambda(\ueps)\ueps\veps\vp
    - \int_0^\infty\io \Phi_\lambda(\ueps)\veps^2\vp
    -\int_0^\infty\io \Phi_\lambda(\ueps)|\na\veps|^2\vp\nn\\
    & &-\int_0^\infty\io \ueps f_\lambda'(\ueps)\veps\na\ueps\cdot\na\vp\nn\\
    & &+\int_0^\infty\io \Big\{\ueps^2f_\lambda'(\ueps)\veps
        - \Phi_\lambda(\ueps)\veps\Big\}
        \na\veps\cdot\na\vp
   \eea
for all $\eps\in(0,\epsr)$. The boundedness and continuity of $f_\lambda(s)$, $\Phi_\lambda(s)$,
and $s^2f_\lambda'(s)$ on $[0,\infty)$ provided by Lemma \ref{lem12}, together with Lemma \ref{lem11}, 
imply that, as $\eps=\eps_j\searrow 0$,
\bas
& &\Phi_\lambda(\ueps)
\wsto \Phi_\lambda(u)
\qquad\mbox{in } L^\infty_{loc}(\bom\times[0,\infty))\nn\\
& & \Phi_\lambda(\ueps)\veps
\to \Phi_\lambda(u)v
\qquad\mbox{in } L^2_{loc}(\bom\times[0,\infty))\nn\\
& & \Phi_\lambda(\ueps)\ueps\veps
\to  \Phi_\lambda(u)uv
\qquad\mbox{in } L^1_{loc}(\bom\times[0,\infty))\nn\\
& &  \Phi_\lambda(\ueps)\veps^2
\to  \Phi_\lambda(u)v^2
\qquad\mbox{in } L^1_{loc}(\bom\times[0,\infty))\nn\\ 
& & \Phi_\lambda(\ueps)|\na\veps|^2
\to  \Phi_\lambda(u)|\na v|^2
\qquad\mbox{in } L^1_{loc}(\bom\times[0,\infty))
\qquad\mbox{and}\nn\\ 
& & \Big\{\ueps^2f_\lambda'(\ueps)\veps
- \Phi_\lambda(\ueps)\veps\Big\}\na\veps
\to \Big\{u^2f_\lambda'(u)v
- \Phi_\lambda(u)v\Big\}\na v
\qquad\mbox{in } L^1_{loc}(\bom\times[0,\infty)).
\eas
Accordingly,  since $f_\lambda''>0$ and $\vp\ge0$, combining these convergences with (\ref{14.1}), Fatou's lemma, and
the pointwise convergences recorded in Lemmas \ref{lem11} and
\ref{lem10}, passing to the limit $\eps=\eps_j\searrow 0$ in (\ref{14.2}) yields
\bas
\int_0^\infty\io u f_\lambda''(u)\, v\, |\na u|^2 \vp
 &\le& \liminf_{\eps=\eps_j\searrow 0}
  \int_0^\infty\io \ueps f_\lambda''(\ueps)\veps|\na\ueps|^2\vp \\
      &=&\int_0^\infty\io f_\lambda(u)\vp_t
      +\io f_\lambda(u_{0})\vp(\cdot,0)
      -\int_0^\infty\io \Phi_\lambda(u)v v_t\vp\nn\\
      & &+ \int_0^\infty\io \Phi_\lambda(u)u v\vp
      - \int_0^\infty\io \Phi_\lambda(u)v^2\vp
      -\int_0^\infty\io \Phi_\lambda(u)|\na v|^2\vp\nn\\
      & &-\int_0^\infty\io u f_\lambda'(u)v\na u\cdot\na\vp\nn\\
      & &+\int_0^\infty\io \Big\{u^2f_\lambda'(u)v
          - \Phi_\lambda(u)v\Big\}
          \na v\cdot\na\vp,
\eas
which is precisely (\ref{w7}).

 \medskip
 
To prove (\ref{w5}), we first test the second equation in (\ref{0eps}) by an
arbitrary $\vp\in C^\infty_0(\bom\times[0,\infty))$ and integrate by parts.
Then, using the definition of $v_{0\eps}$ in (\ref{0eps}), the regularity
assumption on $v_0$ in (\ref{init}), and the convergence properties in
(\ref{11.2}), (\ref{11.8}), and (\ref{11.4}), we may pass to the limit
$\eps=\eps_j\searrow0$ in the resulting weak formulation.  
 \medskip

It remains to verify the energy inequality (\ref{w4}). 
By Lemmas \ref{lem11} and \ref{lem10},
combined with the regularity of the initial data from (\ref{init}), we have, for each fixed $t>0$, 
as $\eps=\eps_j\searrow 0$,
\bas
\mathcal{F}(\ueps(\cdot,t),\veps(\cdot,t))
&\to& \mathcal{F}(u(\cdot,t),v(\cdot,t)),\\
\mathcal{F}(u_{0\eps},v_{0\eps})
&\to& \mathcal{F}(u_0,v_0).
\eas
Moreover, by Fatou's lemma,
\bas
\int_{0}^{t}\io v_{t}^2
+ \int_{0}^{t}\io \Big|\sqrt{v}\na u-u\sqrt{v}\na v\Big|^2
\le
\liminf_{\eps=\eps_j\searrow 0}
\Big\{ \int_{0}^{t}\io v_{\eps t}^2
 + \int_{0}^{t}\io \Big|\sqrt{\veps}\na\ueps-\ueps\sqrt{\veps}\na\veps\Big|^2\Big\}.
\eas
Consequently,
\bas
\mathcal{F}(u(\cdot,t),v(\cdot,t))
+\int_{0}^{t}\io v_{t}^2
+ \int_{0}^{t}\io \Big|\sqrt{v}\na u-u\sqrt{v}\na v\Big|^2
\le  \mathcal{F}(u_0,v_0) 
\qquad\mbox{for all } t>0,
\eas
which is precisely (\ref{w4}). Therefore, the limit pair $(u, v)$ 
constructed in Lemma \ref{lem11} is indeed a global generalized energy solution in
the sense of Definition \ref{dw}.
\qed
Gathering the previous results concludes the proof of Theorem~\ref{theo11}.

\proofc of Theorem \ref{theo11}. \quad
  By the convergence property provided by (\ref{11.7}) we infer from (\ref{mass})
  and (\ref{1.1}) that (\ref{t2}) and (\ref{t3}) indeed hold. The remaining assertions in 
   \textup{(i)} in Theorem  \ref{theo11} then follow directly from Lemmas \ref{lem11}, 
  \ref{lem10} and \ref{lem14}. Furthermore, the radial symmetry of 
  $(u(\cdot,t), v(\cdot,t))$ about $x=0$ for all $t>0$, stated in \textup{(ii)} of
  Theorem  \ref{theo11}, is again a consequence of Lemma \ref{lem11} together 
  with the radial symmetry of $(\ueps(\cdot,t), \veps(\cdot,t))$ for all $t>0$,
  established in Lemma \ref{lem_loc}.
  \qed

\bigskip

\section*{Acknowledgments}
T.B. acknowledges support from the {\em Deutsche Forschungsgemeinschaft} (Project No.~462888149). G.L. was supported by the Young Scientists Fund of
the National Natural Science Foundation of China (Grant No.~12401262) and the
Fundamental Research Funds for the Central Universities (Project
No.~B260201046).

{\small
 }

\end{document}